
\documentclass[11pt]{amsart} %

\usepackage{float}

\usepackage{array,amsmath, enumerate,  url, psfrag}
\usepackage{amssymb, amsaddr, fullpage}
\usepackage{graphicx,subfigure}
\usepackage{color}

\restylefloat{figure}

\newtheorem{theorem}{Theorem}[section]
\newtheorem{lemma}[theorem]{Lemma}
\newtheorem{conj}[theorem]{Conjecture}

\newtheorem{observation}[theorem]{Observation}

\newenvironment{definition}[1][Definition]{\begin{trivlist}
\item[\hskip \labelsep {\bfseries #1}]}{\end{trivlist}}

\title{Planar graphs without $4$-cycles and close triangles are $(2,0,0)$-colorable}

\author{Heather Hoskins$^{1}$ \hskip 0.15in Runrun Liu$^2$ \hskip 0.15in Jennifer Vandenbussche$^{3}$ \hskip 0.15in Gexin Yu$^{1,2}$}

\address{
$^1$\small Department of Mathematics, The College of William and Mary, Williamsburg, VA, 23185, USA.\\
$^2$\small Department of Mathematics and Statistics, Central China Normal University, Wuhan, Hubei, China.\\
$^{3}$\small Department of Mathematics, Kennesaw State University, Marietta, GA, 30060, USA}

\thanks{The research of the last author was partially supported by NSFC (11728102) and the NSA grant  H98230-16-1-0316.}

\email{jvandenb@kennesaw.edu, gyu@wm.edu}

\begin{document}

\maketitle

\begin{abstract}
For a set of nonnegative integers $c_1, \ldots, c_k$, a $(c_1, c_2,\ldots, c_k)$-coloring of a graph $G$ is a partition of $V(G)$ into $V_1, \ldots, V_k$ such that for every $i$, $1\le i\le k, G[V_i]$ has maximum degree at most $c_i$. We prove that all planar graphs without 4-cycles and no less than two edges between triangles are $(2,0,0)$-colorable.
\end{abstract}

\section{Introduction}
The coloring of planar graphs has a long history.  The well-known Four Color Theorem, proved by Appel and Haken (see \cite{AppelHaken1}-\cite{AppelHaken2}) in the 1970s, states that all planar graphs are 4-colorable.  Determining whether an arbitrary planar graph is $3$-colorable is NP-complete; much attention has been given to proving sufficient conditions under which planar graphs are 3-colorable.  The classic example is the theorem by Gr\"{o}tzch \cite{Grozch} showing that planar graphs without 3-cycles are 3-colorable.

Recently, the study of the coloring of planar graphs with 3 colors has taken a very interesting turn. Steinberg~\cite{S76} in 1976 famously conjectured that planar graphs without 4-cycles and 5-cycles are 3-colorable. Erd\H{o}s asked for the constant $D$ such that planar graphs excluding cycles of lengths from $4$ to $D$ are $3$-colorable.  Borodin, Glebov, Raspaud, and Salavatipour~\cite{BGRS05} showed that $D\le 7$.    After being open for almost 40 years, in a very recent paper~\cite{Cohen-Addad}, the Steinberg Conjecture was disproved by a counterexample.  This surprising result suggests that the property of planar graphs being 3-colorable may be more rare than was previously thought, and spurs the search for more classes of planar graphs that are 3-colorable.

One interesting restriction that gives rise to classes of 3-colorable planar graphs involves forbidding triangles that are close together.  This idea is illustrated in the famous conjecture by Havel.

\begin{conj}[Havel, 1969]
There is a constant $C$ (perhaps as small as $4$) such that any planar graph whose triangles are at distance at least $C$ from each other is $3$-colorable.
\end{conj}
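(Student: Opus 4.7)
The plan is to reduce Havel's conjecture to Grötzsch's theorem by a precoloring-extension argument. Let $G$ be a planar graph in which any two triangles are at distance at least $C$, for a large constant $C$ to be chosen. The strategy has three steps: (a) for each triangle $T = \{u,v,w\}$ of $G$, fix an arbitrary rainbow $3$-coloring of $V(T)$; (b) delete the edges of every triangle to obtain a triangle-free planar graph $G^{-}$ that carries a precoloring on the set $S = \bigcup_T V(T)$, whose vertices are pairwise at distance roughly $C$ in $G^{-}$; (c) show that this precoloring extends to a proper $3$-coloring of $G^{-}$. The restriction of such an extension to $V(G)$ is then a proper $3$-coloring of $G$, because each triangle of $G$ was rainbow-colored by construction.

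The heart of the argument is a precoloring-extension lemma of the following shape: if $H$ is a triangle-free planar graph and $S \subseteq V(H)$ is a set of vertices at pairwise distance at least $C'$ in $H$, then every proper $3$-coloring of $H[S]$ extends to a proper $3$-coloring of $H$. I would attack this by induction on $|V(H)|$, taking Grötzsch's theorem as the base case $S = \emptyset$. For the inductive step I would pick a minimum counterexample $H$, use minimality to establish structural properties (for instance, $\delta(H) \ge 3$, no short separating cycles, and no small subgraph whose deletion would allow the precoloring to extend), and then combine Euler's formula with a discharging argument calibrated to the distance hypothesis on $S$: the charge redistribution would push charge out of neighborhoods of precolored vertices, and the large pairwise distance would prevent two such neighborhoods from interfering.

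The main obstacle is proving the precoloring-extension lemma itself, which is not a routine strengthening of Grötzsch. A precolored vertex propagates constraints along short even paths (any path of length $2$ from a color-$1$ vertex forces the other endpoint out of color $1$ in bad ways), and one must ensure that such propagations from different precolored vertices never conflict. Identifying a sufficient list of reducible configurations around precolored vertices, and then designing a discharging scheme that forces every non-extendable minimum counterexample to contain one of them, is the delicate part. In the full resolution of Havel's conjecture by Dvořák, Král, and Thomas, essentially this step occupies a long structural analysis and yields only an astronomical value of $C$; obtaining the conjectured value $C = 4$ appears to require genuinely new ideas and is, in my plan, well beyond reach.
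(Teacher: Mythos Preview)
The paper does not contain a proof of Havel's conjecture; it is quoted as background, and the resolution by Dvo\v{r}\'ak, Kr\'al, and Thomas is merely cited. So there is no argument in the paper to compare yours against. That said, your plan has a concrete gap worth naming.

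You assert that in $G^{-}$ the precolored set $S=\bigcup_T V(T)$ has its vertices pairwise at distance roughly $C$. This is false. The hypothesis $d_{\triangle}(G)\ge C$ separates vertices of \emph{different} triangles, but two vertices $u,v$ of the \emph{same} triangle can be at distance as small as $3$ in $G^{-}$: nothing in the hypotheses forbids a $4$-cycle $uabv$ in $G$ through non-triangular vertices, and once $uv$ is deleted the path $u\,a\,b\,v$ remains. (In other examples the three vertices of a former triangle land in different components of $G^{-}$.) Consequently your precoloring-extension lemma, stated for pairwise far-apart precolored vertices, does not apply to the instance your reduction actually produces; the correct target would be a lemma for far-apart \emph{clusters} of precolored vertices. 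Moreover, deleting the triangle edges buys nothing: it trades a compact anomaly (three mutually adjacent rainbow-colored vertices) for three precolored vertices possibly scattered across $G^{-}$, which makes the interaction of constraints harder, not easier, to control. The Dvo\v{r}\'ak--Kr\'al--Thomas argument, and the superextension technique this paper uses for $(2,0,0)$-colorings, both keep the triangles intact and extend a coloring of one short cycle at a time; that is the cleaner route if you want to pursue this strategy.
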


This conjecture was resolved by Dvo\v{r}\'{a}k, Kr\'{a}l' and Thomas~\cite{Dvorak} by showing the truth for any planar graph $G$ with $d_{\Delta}(G)>10^{100}$, where $d_{\Delta}(G)$ is the length of the shortest path between the vertices of any two 3-cycles.  Clearly more work is needed to understand the constant $C$, but in the meantime, there have been advances that combine the hypotheses of the Steinberg and the Havel conjectures.  For example, Borodin and Glebov \cite{BorodinGlebov} showed that any planar graph $G$ without $5$-cycles and satisfying $d_{\triangle}(G) \geq 2$ is $3$-colorable.

With the recent counterexample to Steinberg's conjecture showing that it may be more difficult to find 3-colorable planar graphs than originally thought, it becomes more interesting to investigate ``nearly'' 3-colorable planar graphs.   A graph $G$ is $(c_1, c_2, \ldots, c_k)$-colorable if $V(G)$ can be partitioned into $k$ nonempty subsets $V_1, V_2, \ldots, V_k$ such that the maximum degree of $G[V_i]$ is at most $c_i$.  In other words, there exists a $k$-coloring such that for each color $i$, each vertex colored with $i$ has at most $c_i$ neighbors of the same color.   Clearly, a graph is properly $3$-colorable if and only if it is $(0,0,0)$-colorable.  In \cite{Cowen},  it is shown that every planar graph is $(2,2,2)$-colorable.

There are many results in this area; we refer interested readers to~\cite{MO15}.  As an illustration, the following is a list of results known for 5-cycle-free planar graphs.

\begin{theorem}
Let $G$ be a planar graph without $5$-cycles.
\begin{itemize}
\item If $G$ also has no $4$-cycles, then it is $(2,0,0)$- and $(1, 1, 0)$-colorable (\cite{Chen, XuMiaoWang, HillYu}).
\item If $G$ has no intersecting triangles, then it is $(2,0,0)$- and $(1,1,0)$-colorable (\cite{LLY15a, LLY15b}).
\item If $G$ has no $K_4^-$, then it is  $(1,1,1)$- and $(1, 1, 0)$-colorable. (\cite{HLY17, Xu}).
\end{itemize}
\end{theorem}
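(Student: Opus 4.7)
The three items in this theorem appear in separate papers, but they share a common template: a minimum-counterexample argument combined with the discharging method. I will outline the plan for the first bullet---a $5$-cycle-free and $4$-cycle-free planar graph is $(2,0,0)$-colorable---noting at the end how the other bullets fit the same mold.

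First, I would let $G$ be a counterexample minimizing $|V(G)|+|E(G)|$ and prove a list of forbidden local structures (reducible configurations) in $G$. Each such lemma would say: if $G$ contains the configuration, then deleting a carefully chosen vertex or contracting a small piece yields a smaller graph $G'$ which, by minimality, admits a $(2,0,0)$-coloring $\varphi'$; this $\varphi'$ can then be extended to $G$ (we only need to find a color for the removed vertex that respects the degree bound $2$ in color class $1$ and the independence of color classes $2$ and $3$). Typical configurations to rule out are vertices of degree at most $2$, $3$-vertices whose neighbors are constrained by a triangle or by other low-degree vertices, and short chains of $3$-vertices around a short face. Under the hypotheses, $4$-faces cannot occur at all, so every nontriangular face has length at least $5$; the $5$-cycle-free condition then pushes the effective lower bound to $6$, which is the main source of positive charge in the discharging phase.

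Next, I would run discharging. Assign $\mu(v)=d(v)-4$ to each vertex and $\mu(f)=\ell(f)-4$ to each face; by Euler's formula the total charge is $-8$. Triangles and $3$-vertices each carry initial charge $-1$, while $6^{+}$-faces start with at least $2$, and higher-degree vertices carry the remaining surplus. The plan is to design rules---for example, each $6^{+}$-face donating a fixed amount to each incident triangle and to each incident $3$-vertex, and each vertex of degree at least $5$ donating to its incident triangles and low-degree neighbors---so that, combined with the reducibility of the configurations above, every element finishes with nonnegative charge. This would contradict the total $-8$.

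The main obstacle is the design of the reducibility list and the discharging rules in tandem: the list must be rich enough that the rules have enough slack to balance the books at every triangle and every $3$-vertex, yet each configuration must genuinely be extension-reducible, and the extension arguments for $(2,0,0)$-colorings require a delicate case analysis because the ``loose'' color class $1$ couples neighboring color decisions in ways that proper $3$-coloring does not. The hardest local cases are triangles surrounded by several $3$-vertices, or adjacent to only a few large faces; showing sufficient incoming charge there typically requires combining a carefully chosen rule with a reducibility argument that kills the worst surrounding pattern. For the other two bullets the template is identical, with the reducibility list adjusted to use ``no intersecting triangles'' or ``no $K_4^-$'' instead of ``no $4$-cycles,'' and the color-class bookkeeping adjusted for the $(1,1,0)$ or $(1,1,1)$ target.
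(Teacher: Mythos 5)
This statement is a survey of prior results; the paper does not prove it but cites \cite{Chen, XuMiaoWang, HillYu, LLY15a, LLY15b, HLY17, Xu}, so there is no in-paper proof to compare against. Judged on its own terms, your proposal is not a proof but a description of the discharging \emph{method}: you never specify the list of reducible configurations, never carry out a single reducibility argument, never state the discharging rules, and never verify that any vertex or face ends with nonnegative charge. You acknowledge this yourself when you write that ``the main obstacle is the design of the reducibility list and the discharging rules in tandem'' --- but that obstacle \emph{is} the theorem; each of the cited papers is essentially devoted to overcoming it, and the answer is different in each of the three bullets. A plan that says ``find enough reducible configurations and rules so that the books balance'' could be written verbatim for many false statements, so it carries no evidential weight.

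Two more concrete issues with the outline as written. First, for $(2,0,0)$-colorings the naive minimal-counterexample scheme (delete a vertex, extend the coloring) is known to be insufficient: the color class with maximum degree $2$ does not behave well when colorings of pieces are glued along a separating cycle, which is exactly why the cited works (and the present paper) prove a stronger \emph{superextendability} statement about precolored short cycles rather than plain colorability. Your sketch does not address this, and several of your proposed reductions (contractions/identifications) would also need verification that they preserve the girth-type hypotheses, which is where much of the case analysis in these papers lives. Second, the charge bookkeeping is not as clean as you suggest: with $\mu(v)=d(v)-4$ and $\mu(f)=\ell(f)-4$, triangles and $3$-vertices both start at $-1$, and a triangle all of whose vertices have degree $3$ needs to receive charge from faces or vertices at distance, which forces pendant-face rules and the associated reducible configurations (e.g., the analogue of Lemma~\ref{335Face} in this paper). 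None of that is present. In short, the approach is the standard and correct one in outline, but essentially all of the mathematical content is missing.
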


In ~\cite{WX13}, Wang and Xu proved that planar graphs without $4$-cycles are $(1,1,1)$-colorable (in fact, $(1,1,1)$-choosable), and constructed a  non-$3$-colorable planar graph that has no $4$-cycles (and $d_{\Delta}=1$).  (See Figure~\ref{4cyclefree}.)  Furthermore, although the result by Borodin and Glebov \cite{BorodinGlebov}  (and other likewise results) forbids 5-cycles and not 4-cycles, their proof involves showing that there are no internal $4$-cycles in a minimal counterexample.

\begin{figure}[H]
\includegraphics[scale=0.18]{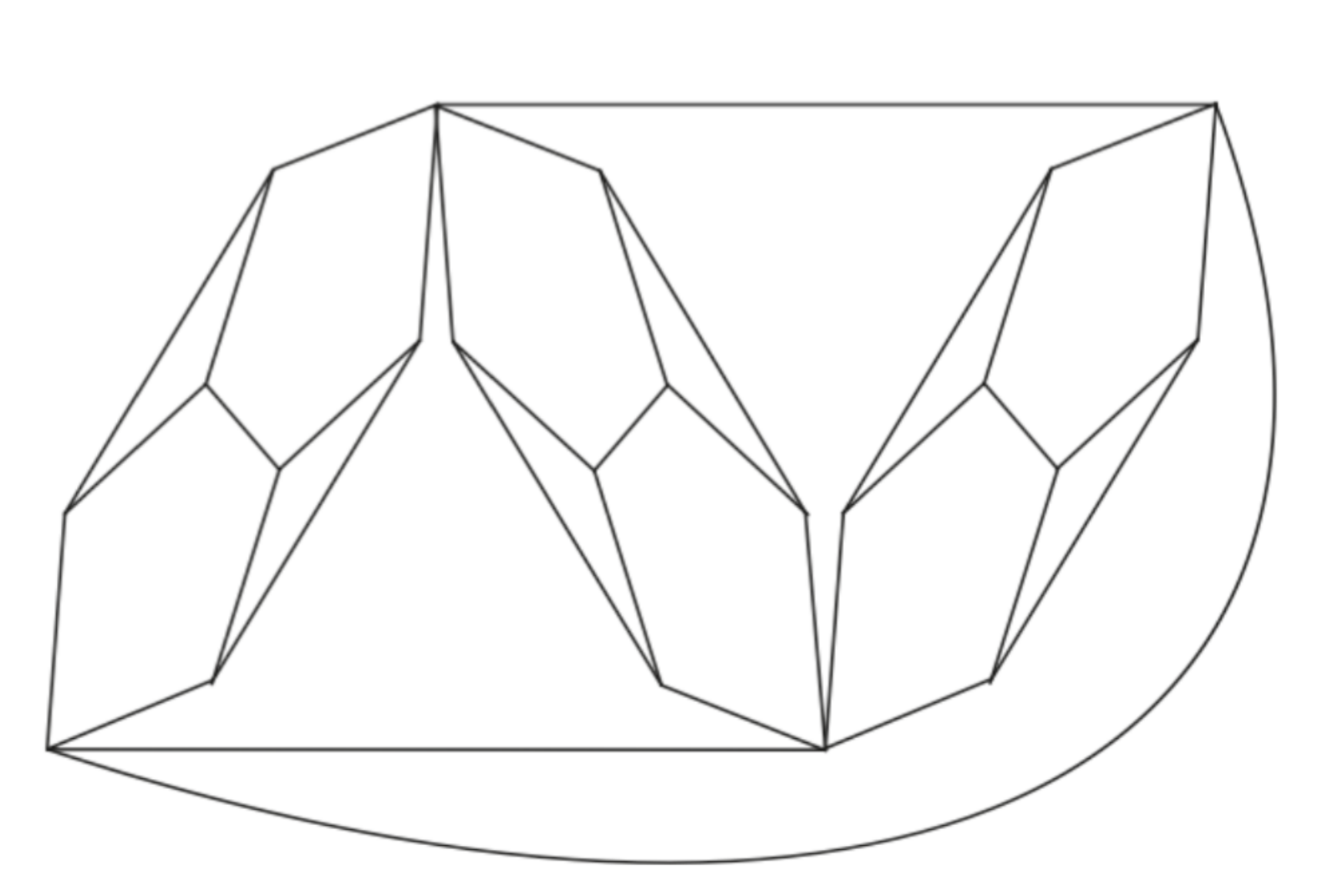}
\label{4cyclefree}
\caption{A non-$(0,0,0)$-colorable planar graph without $4$-cycles.}
\end{figure}

This motivates the study of the $3$-colorability of planar graphs without $4$-cycles (but perhaps with $5$-cycles) and satisfying $d_{\triangle}(G) \geq 2$. We conjecture that the following is true.

\begin{conj}\label{mystery_conjecture}
If $G$ is a planar graph without 4-cycles such that $d_{\triangle}(G) \geq 2$, then $G$ is 3-colorable.
\end{conj}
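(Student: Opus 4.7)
The plan is to attack Conjecture~\ref{mystery_conjecture} by the standard minimum-counterexample plus discharging framework that has proved successful for results of this type, including the $(2,0,0)$-coloring theorem of this paper. Suppose for contradiction that $G$ is a planar graph satisfying the hypotheses, is not $3$-colorable, and has the fewest vertices among all such counterexamples; the goal is to derive a structural contradiction.

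First I would establish the basic properties of such a minimum counterexample $G$: it must be $2$-connected, have minimum degree at least $3$, and contain no \emph{reducible configuration}, that is, no local subgraph $H$ such that every proper $3$-coloring of a carefully chosen smaller graph (typically $G$ with $H$ deleted or partially contracted) extends to a $3$-coloring of $G$. Typical reducible configurations for $3$-coloring results involve low-degree vertices clustered together, short chains of $3$-vertices along a face, and triangles incident to too many light vertices. The hypothesis $d_{\triangle}(G) \geq 2$ is crucial here: any two triangles are vertex-disjoint and not joined by an edge, so each triangle can be coloring-isolated from the rest of the graph, and each triangular face is surrounded by three faces of length at least $5$ rather than by other triangles or $4$-faces.

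Next, I would set up the discharging. Assign each vertex $v$ initial charge $\mu(v) = d(v) - 4$ and each face $f$ initial charge $\mu(f) = \ell(f) - 4$; Euler's formula then gives $\sum_v \mu(v) + \sum_f \mu(f) = -8$. Since $G$ has no $4$-cycles, every face has length $3$ or at least $5$, so only $3$-vertices and triangular faces carry negative initial charge while all $\geq 5$-faces and all $\geq 5$-vertices carry nonnegative charge. I would then design rules that move charge from $\geq 5$-faces and high-degree vertices to $3$-faces and to $3$-vertices through shared incidences; the crucial structural input is that, because $d_{\triangle}(G) \geq 2$, every triangular face has three distinct $\geq 5$-face neighbors to draw from, and no triangle competes with another triangle for the same donor face. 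The aim is to verify that, after discharging, every element has nonnegative final charge, contradicting the total $-8$.

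The step I expect to be the real obstacle --- and the reason the authors settle for the $(2,0,0)$-relaxation rather than true $3$-colorability --- is the shortage of reducible configurations in the proper $3$-coloring setting. In a $(2,0,0)$-coloring the first color class tolerates two same-colored neighbors at each vertex, which provides substantial slack when extending a partial coloring around a deleted configuration; in the $(0,0,0)$ setting this slack vanishes, many configurations that are reducible for $(2,0,0)$ cease to be so, and correspondingly the discharging rules must become much more delicate. Overcoming this will likely require either an essentially new reducible configuration that fully exploits $d_{\triangle}(G) \geq 2$ (for instance, one involving a triangle together with the structure of its three surrounding $\geq 5$-faces), a global argument in the spirit of Dvo\v{r}\'{a}k--Kr\'{a}l'--Thomas, or a more refined analysis of how $5$-faces are forced to arrange themselves around clusters of $3$-vertices. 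Identifying such a configuration or argument is where I expect most of the work to lie.
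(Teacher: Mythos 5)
This statement is a conjecture that the paper explicitly leaves open; the authors prove only the $(2,0,0)$-coloring relaxation (Theorem~1.4), so there is no proof in the paper to compare against. More importantly, your proposal is not a proof of the conjecture either: it is a description of the discharging \emph{framework}, with no reducible configurations actually identified, no discharging rules actually written down, and no verification that final charges are nonnegative. You say so yourself --- the paragraph beginning ``The step I expect to be the real obstacle'' concedes that the essential content (a reducible configuration or global argument that works for proper $3$-coloring) has not been found. That concession is the entire problem: everything you do write down (minimum counterexample, $2$-connectivity, minimum degree $3$, $\mu(v)=d(v)-4$, $\mu(f)=\ell(f)-4$, total charge $-8$, triangles surrounded by $5^+$-faces) is boilerplate that applies verbatim to almost any planar $3$-coloring problem, including ones that are false. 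In particular, your setup does not engage with Figure~1 of the paper, a non-$3$-colorable planar graph with no $4$-cycles and $d_{\triangle}=1$: any genuine argument must use $d_{\triangle}\ge 2$ in a way that visibly breaks on that example, and nothing in your outline does.

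For calibration on how much is missing: even the weaker $(2,0,0)$ result requires the authors to strengthen the induction hypothesis to ``superextendability'' of precolored triangles, $5$-cycles, and good $6$-cycles (to handle separating short cycles), to prove roughly ten reducible-configuration lemmas (Lemmas~\ref{No2V}--\ref{336Face}), many of which rely on identification/contraction constructions whose membership in $\mathcal{G}$ must be checked, and to run a discharging argument with fractional charges as fine as $\frac{1}{24}$. All of those reducibility arguments exploit the slack of the color class with defect $2$ (e.g.\ Lemma~\ref{2Colored3Face} recolors vertices \emph{with} color $1$ even when a neighbor already has color $1$), and that slack is exactly what disappears in the $(0,0,0)$ setting. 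So the proposal correctly diagnoses where the difficulty lies but supplies no idea for overcoming it; as it stands it establishes nothing beyond restating that the conjecture is open.
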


In this paper, we prove a relaxation of Conjecture~\ref{mystery_conjecture}.
Let $\mathcal{G}$ be the set of planar graphs with $d_{\triangle}(G)\ge 2$ and no 4-cycles.

\begin{theorem}\label{200colorable}
If $G \in \mathcal{G}$, then $G$ is $(2, 0, 0)$-colorable.
\end{theorem}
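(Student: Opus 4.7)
The plan is to proceed by taking a minimum counterexample to Theorem~\ref{200colorable} and arguing via discharging on its plane embedding.

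Let $G\in\mathcal{G}$ be such a counterexample with $|V(G)|+|E(G)|$ minimum. The first task is to establish a list of reducible configurations: small subgraphs that cannot occur in $G$, each treated by deleting a small piece to obtain $G'\in\mathcal{G}$, coloring $G'$ by minimality, and extending the coloring back to $G$. Standard configurations to target include vertices of degree at most $1$, degree-$2$ vertices with suitable neighborhoods, and degree-$3$ vertices participating in specific triangle arrangements. The extensions exploit the flexibility of the color class $V_1$, which tolerates induced degree up to $2$: a removed vertex of degree at most $2$ can almost always be placed in $V_1$, and only a tight local pattern (both neighbors already saturating their color-$1$ degree) blocks this placement. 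Slightly larger reducible structures likely involve $3$-vertices adjacent to triangles; here the fact that $d_\triangle(G)\ge 2$ controls the neighborhood and allows local recoloring arguments.

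With the reducible configurations in hand, I would apply Euler's formula using initial charges $\mu(v)=d(v)-4$ for each vertex and $\mu(f)=\ell(f)-4$ for each face, so that $\sum \mu = -8$. Since $G$ has no $4$-cycles, face lengths lie in $\{3,5,6,\ldots\}$, giving $3$-faces charge $-1$, $5$-faces charge $+1$, and longer faces at least $+2$; among vertices, only $3$-vertices are negatively charged. The discharging rules would transfer charge from $5^+$-faces and $5^+$-vertices to adjacent $3$-faces and $3$-vertices. The hypothesis $d_\triangle(G)\ge 2$ is crucial: each $3$-face is surrounded by $5^+$-faces, and no two triangles share a near neighbor, so every triangle can draw charge from its incident large faces without competition from another triangle.

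The principal obstacle will be the analysis of $5$-faces, which carry only $+1$ of charge yet may simultaneously border a $3$-face while also supporting an incident $3$-vertex, together with the case of $3$-vertices that themselves lie on a triangle. The reducible configurations from the first step must be strong enough to eliminate the tightest such local patterns, forcing every vertex and every face to end with nonnegative charge. The resulting inequality $0\le\sum\mu=-8$ then yields the desired contradiction, completing the proof.
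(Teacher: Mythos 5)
There is a genuine gap, and it is precisely the point on which the whole paper turns. A minimal counterexample to the plain statement ``$G\in\mathcal{G}$ is $(2,0,0)$-colorable'' cannot be shown to be free of separating triangles, $5$-cycles, or $6$-cycles: if $C'$ is a separating triangle, coloring $C'\cup ext(C')$ and $C'\cup int(C')$ separately by minimality does not yield a coloring of $G$, because a vertex of $C'$ colored $1$ may pick up two color-$1$ neighbors on each side and end with four. For proper $3$-coloring this gluing is harmless, but for $(2,0,0)$-coloring it fails, which is exactly why the paper instead proves the stronger Theorem~\ref{Supext}: every $(2,0,0)$-precoloring of a triangle, $5$-cycle, or good $6$-cycle \emph{superextends} (no precolored vertex gets a new neighbor of its own color). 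That strengthening is what lets one rule out separating short cycles (Lemma~\ref{NoSeparating}), and the absence of such cycles is in turn indispensable for the reducible configurations: most of them (Lemmas~\ref{5Face}, \ref{two3face}, \ref{334Face}, \ref{335Face2}, \ref{336Face}) work by \emph{identifying} two nonadjacent vertices, and one needs the no-separating-cycle facts to certify that the identified graph has no $4$-cycles and still satisfies $d_\triangle\ge 2$. Your proposal only envisages deletions of low-degree vertices and never confronts this, so the induction as you describe it cannot be closed.

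A second, more quantitative problem: internal $3$-vertices are \emph{not} reducible in this setting. The best one can prove is that a $3$-vertex with no neighbor on the precolored cycle must have a $5^+$-neighbor (Lemma~\ref{3vert1}); $3$-vertices, including triangular ones, genuinely occur and must be paid for in the discharging. With your normalization $\mu(v)=d(v)-4$, every $3$-vertex starts at $-1$ and every $3$-face at $-1$, and a $5$-face carries only $+1$ while possibly being incident to several $3$-vertices and adjacent to a $3$-face; there is no obvious source of charge, and you give no rules. The paper avoids the vertex deficit entirely by using $\mu(v)=2d(v)-6$ (so $3$-vertices are neutral) and then needs an elaborate system of pendant faces, special vertices, and good/bad $5$-vertices, supported by roughly ten structural lemmas, to make the face charges work out. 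In short, your plan names the method but omits both the key structural idea (superextendability of precolored short cycles) and all of the content that makes the discharging balance.
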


To prove Theorem~\ref{200colorable}, we use the idea of superextendable colorings introduced by Xu in \cite{Xu}.

\begin{definition}
A $(2,0,0)$-coloring $\phi$ of a subgraph $H$ of $G$ {\it superextends} to $G$ if there exists a $(2,0,0)$-coloring $\phi_G$ of $G$ that extends $\phi$ with the property that $\phi(v)\not=\phi(u)$ whenever $v\in H$ and $u\in G\cap N(v)-H$, where $N(v)$ is the set of neighbors of $v$.  We say that a subgraph $H\subseteq G$ is {\it superextendable} to $G$ if every $(2,0,0)$-coloring $\phi_H$ of $H$ superextends to $G$.   When we wish to specify $G$, we will say $(G,H)$ is superextendable.
\end{definition}

We need the following definition.

\begin{definition}
A 6-cycle is {\it bad} if alternating vertices along the 6-cycle are matched to the vertices of a triangle. The triangle is called an {\em interior triangle} of a bad $6$-cycle.   (See Figure~\ref{bad6cycle}.)  Otherwise, a 6-cycle is {\it good}.
\end{definition}

\begin{figure}[H]
\includegraphics[scale=0.08]{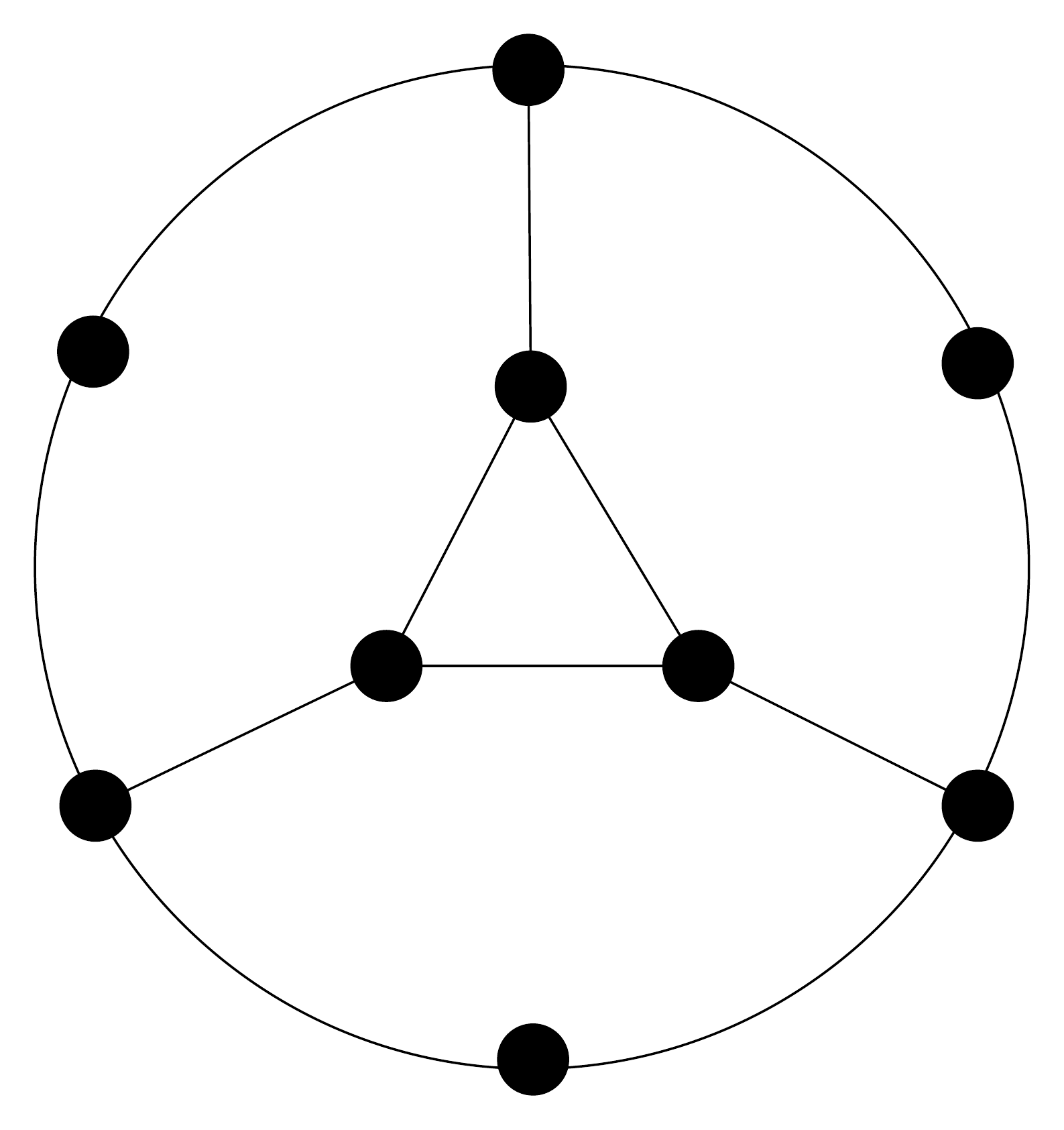}
\caption{A bad $6$-cycle.}
\label{bad6cycle}
\end{figure}

Our approach is to prove the following stengthening of Theorem~\ref{200colorable}.
\begin{theorem}
\label{Supext}
For each $G \in \mathcal{G}$, every triangle, 5-cycle and good 6-cycle in $G$ is superextendable.
\end{theorem}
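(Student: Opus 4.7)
The plan is to prove Theorem~\ref{Supext} by contradiction via a minimal-counterexample argument combined with discharging, following the template used for superextendability results. Let $(G,H)$ be a counterexample minimizing $|V(G)|+|E(G)|$, where $H$ is a triangle, $5$-cycle, or good $6$-cycle in $G$, and $\phi_H$ is a $(2,0,0)$-coloring of $H$ that fails to superextend to $G$. Minimality then asserts that for every $G' \in \mathcal{G}$ with $|V(G')|+|E(G')|<|V(G)|+|E(G)|$ and every triangle, $5$-cycle, or good $6$-cycle $H'$ in $G'$, the pair $(G',H')$ is superextendable.

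The first phase is to build a catalogue of reducible configurations that cannot appear in $(G,H)$. Typical targets are vertices of degree at most $2$ in $V(G)\setminus V(H)$, small vertex cuts separating $H$ from a pendant piece of $G$, degree-$3$ vertices outside $H$ with a cooperative neighborhood, and restrictive local configurations around triangles, $5$-faces, and $6$-faces near $H$. For each configuration I would delete or contract a carefully chosen piece to produce a smaller pair $(G',H')$, apply minimality to obtain a superextending coloring $\phi'$, and then extend $\phi'$ back to $G$ while respecting the superextendability requirement along $\partial H$. The key piece of slack is that superextendability blocks only the single color $\phi(v)$ on each outside-neighbor of $v \in H$, and the $(2,0,0)$-structure tolerates many same-color neighbors in the large color class.

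The second phase is a discharging argument using Euler's formula. Since $G$ has no $4$-cycles, every face has length $3$ or at least $5$, so the initial charges $\mu(v)=d(v)-4$ and $\mu(f)=\ell(f)-4$ sum to $-8$. Triangles start with charge $-1$, while $5$-faces have charge $+1$, larger faces have more, and vertices of degree at least $5$ contribute positively. I would design rules that funnel charge from large faces and high-degree vertices into the deficit at triangles and $5$-faces, heavily exploiting $d_\triangle(G)\ge 2$ (each triangle is flanked by faces of length at least $5$ and cannot share vertices with another triangle) together with the reducibility lemmas from the first phase. Verifying that every face and vertex ends with nonnegative charge then contradicts the total $-8$.

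The main obstacle, I expect, is the reducibility analysis near $H$. Because superextendability pins the colors on $H$ and forces outside-neighbors to avoid them, a reduction that touches vertices adjacent to $H$ typically produces a smaller pair $(G',H')$ in which $H'$ is an enlarged cycle absorbing former exterior vertices; one must then verify that $H'$ is still a triangle, $5$-cycle, or good $6$-cycle in $G'$, that $G'$ remains in $\mathcal{G}$ (no new $4$-cycles are created, and no two triangles are brought within distance less than $2$), and that the recovered coloring of $G'$ restricts to $\phi_H$ on the original $H$. Each such constraint may fail in delicate ways — for instance, a reduction could turn a good $6$-cycle into a bad one, or merge two triangles — and ruling these out typically spawns further reducibility lemmas, so the case analysis in this phase is where the real difficulty lies. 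A secondary challenge is that bad $6$-faces may occur in the interior of $G$, so the discharging rules must feed them appropriately even though they are never available as initial colored subgraphs.
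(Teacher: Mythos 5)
Your high-level strategy --- minimal counterexample, a catalogue of reducible configurations, then discharging over Euler's formula --- is exactly the strategy of the paper, and your normalization $\mu(v)=d(v)-4$, $\mu(f)=\ell(f)-4$ is an acceptable variant of the paper's $2d(v)-6$, $d(f)-6$. But what you have written is a plan, not a proof: every step that carries actual mathematical content is deferred with ``I would design rules'' or ``typical targets are.'' For a theorem of this kind the entire difficulty lives in those deferred steps, so as it stands there is nothing to verify. Concretely, you are missing: (i) the preliminary reduction showing the precolored cycle $C$ is non-separating and hence may be taken as the outer face, together with the fact that $G$ contains \emph{no} separating triangles, $5$-cycles, or good $6$-cycles --- this is the step that makes superextendability pay off, and it also means your worry about $H'$ becoming an ``enlarged cycle absorbing former exterior vertices'' does not arise in the paper's setup (all reductions live in $int(C)$ and one checks instead that identifications create no chords of $C$); (ii) the actual reducible configurations, which in the paper are vertex-deletion-plus-identification arguments (e.g.\ identifying the two outer neighbors of a triangle's degree-$3$ vertices) whose legality hinges on delicate facts --- no new $4$-cycle unless there is a separating $6$-cycle, which must then be bad, which is then excluded because one of its vertices would violate an earlier lemma; and (iii) the discharging rules themselves, which require fine-grained notions (special vertices, pendant $(3,3,5^-)$- and $(3,4,4)$-faces, good versus bad $5$-vertices, rich $5$-faces) and fractional transfers such as $\frac{5}{8}$ and $\frac{3}{8}$ precisely because the naive rules you gesture at do not close.

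Two smaller points. First, your remark that the discharging ``must feed'' bad $6$-faces is off target: $6$-faces carry nonnegative charge in either normalization and receive nothing; the role of bad $6$-cycles is entirely in the reducibility phase, where they obstruct the identification arguments and force the extra hypotheses in the key lemmas. Second, the statement being proved restricts to \emph{good} $6$-cycles for a reason you should internalize before attempting the reductions: a precolored bad $6$-cycle genuinely fails to superextend (color its three degree-$3$ vertices with color $1$), and the same obstruction is what forces the case analysis in the identification lemmas. Until you exhibit at least one complete reducible configuration with its extension argument and one complete discharging verification, the proposal cannot be counted as a proof of the theorem.
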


Observe that the restriction to good 6-cycles is necessary. Otherwise, the graph in Figure~\ref{bad6cycle} is a counterexample: precolor the vertices of degree 3 on the 6-cycle with color 1.

Assuming Theorem~\ref{Supext} holds, it is easy to verify that Theorem~\ref{200colorable} also holds.  If $G$ has no triangles, then $G$ is $(2,0,0)$-colorable (in fact, $(0,0,0)$-colorable by Gr\"{o}tzch's theorem). Otherwise, fix a $(2,0,0)$-coloring $\phi$ of some triangle; by Theorem~\ref{Supext}, the coloring can be superextended to $G$, which is a $(2,0,0)$-coloring of $G$.

In Section~\ref{preliminaries}, we highlight the advantage of proving Theorem~\ref{Supext}, and we present some preliminary observations about a minimum counterexample to the theorem. The proof uses the discharging method, so Sections~3 and 4 contain, respectively, the reducible configurations and the discharging arguments.

\section{Preliminaries and Definitions}\label{preliminaries}

The advantage of proving the stronger theorem involving superextendable colorings was noted by Xu in \cite{Xu}.  Let a cycle $C$ in a plane graph $G$ be a {\it separating} cycle if the deletion of $C$ results in a disconnected graph. Let $int(C)$ denote the {\it interior} of $C$, and similarly $ext(C)$ the {\it exterior}, when the vertices of $C$ are deleted.  If a proper coloring of a separating cycle $C$ can be extended to $int(C)$ and $ext(C)$ individually, then the union of the two colorings is a proper coloring.  However, this property would not hold for $(2,0,0)$-colorings of the two subgraphs; a vertex of $C$ precolored with color 1 may have two neighbors of color 1 in both $int(C)$ and $ext(C)$, so the union of the two colorings would contain a vertex of color 1 with four neighbors of color 1.  The superextendable property allows us to combine colorings of $int(C)$ and $ext(C)$ into a $(2,0,0)$-coloring of the entire graph.

In order to illustrate this more clearly, we must introduce some notation that will be used for the remainder of the paper.  Our proof of Theorem~\ref{Supext} is by contradiction, so we will let $(G, C)$ for $G\in\mathcal{G}$ be a counterexample to the theorem of minimum order.  That is, some fixed precoloring $\phi$ of a cycle $C$ of length 3 or 5 or a good cycle of length 6 in $G$ cannot be superextended to $G$, and $G$ is the smallest graph with this property.  Let $V(C)$ denote the vertices of the cycle, and let $|V(C)|=r$.

We first observe that $C$ cannot be a separating cycle, analogous to Lemma 1 in \cite{Xu}.  Otherwise, $\phi$ can be superextended individually to $int(C)$ and $ext(C)$ by the minimality of $G$, and then the union of these two colorings would be a superextension of $\phi$ to $G$, a contradiction. Hence we may assume that $G$ is drawn with $C$ as the exterior face.

\begin{lemma}\label{NoSeparating}
$G$ does not contain separating triangles, 5-cycles or good 6-cycles.
\end{lemma}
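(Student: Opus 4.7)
The plan is to argue by contradiction, in direct parallel to the short argument given just before the lemma statement showing that $C$ itself is non-separating. Suppose, toward a contradiction, that $G$ contains a separating cycle $C'$ of length $3$ or $5$, or a separating good $6$-cycle. Since $C$ has been drawn as the outer face, the bounded region determined by $C'$ (which we continue to denote $int(C')$) is nonempty and disjoint from $V(C)$. Set $G_1 = G - int(C')$ and $G_2 = G - ext(C')$; both are proper subgraphs of $G$ that share exactly $V(C')$, and $C \subseteq G_1$.

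First I would invoke the minimality of $(G,C)$ on the pair $(G_1, C)$: since planarity, $4$-cycle-freeness, and the property $d_{\triangle}\ge 2$ all pass to subgraphs, we have $G_1 \in \mathcal{G}$; moreover $C$ is still a triangle, $5$-cycle, or good $6$-cycle in $G_1$ (a subgraph introduces no new triangles, so the ``good'' condition is preserved). Therefore $\phi$ superextends to $G_1$, yielding a coloring $\phi_1$. Next I would apply minimality to $(G_2, C')$, with $C'$ precolored by $\phi_1|_{V(C')}$: the same inheritance arguments give $G_2 \in \mathcal{G}$, and $\phi_1|_{V(C')}$ is a legitimate $(2,0,0)$-precoloring of a triangle, $5$-cycle, or good $6$-cycle in $G_2$. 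Hence it superextends to $G_2$, giving a coloring $\phi_2$. Define $\phi_G$ on $V(G) = V(G_1) \cup V(G_2)$ by $\phi_G|_{V(G_1)} = \phi_1$ and $\phi_G|_{V(G_2)} = \phi_2$; these agree on $V(C') = V(G_1) \cap V(G_2)$ by construction.

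The verification that $\phi_G$ is a superextension of $\phi$ to $G$ is where the strength of the superextendability notion pays off. Vertices of $int(C')$ have all their neighbors in $G_2$, so their monochromatic degrees are controlled by $\phi_2$; vertices of $V(G_1)\setminus V(C')$ are handled symmetrically by $\phi_1$. For a vertex $v \in V(C')$, the superextension property of $\phi_2$ forces $\phi_2(u)\neq \phi_G(v)$ for every neighbor $u$ of $v$ in $int(C')$, so the monochromatic degree of $v$ in $G$ equals its monochromatic degree in $G_1$, which is at most $2$. Finally, the superextension condition of $\phi_G$ at vertices of $C$ is inherited directly from $\phi_1$, because every neighbor of a vertex of $C$ lies in $G_1$ (as $C$ is the outer face). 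This contradicts the choice of $(G,C)$ as a counterexample.

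The main obstacle I anticipate is conceptual rather than combinatorial: everything depends on having the strong superextension property available when we re-apply minimality at $C'$, because without it a vertex of $C'$ colored $1$ could pick up two color-$1$ neighbors on each side of $C'$ and end up with four same-color neighbors in $G$ after gluing. Once this observation is in hand, the remaining work — checking the three inheritance claims (planar embedding, absence of $4$-cycles, and preservation of ``good'' status for a $6$-cycle) under the passage to subgraphs — is routine.
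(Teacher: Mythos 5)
Your proof is correct and follows essentially the same route as the paper: split $G$ along $C'$ into the outer part containing $C$ and the inner part, superextend $\phi$ to the outer part by minimality, then superextend the induced precoloring of $C'$ to the inner part, and glue. The extra verification you give of why the glued coloring is a valid superextension (in particular at vertices of $C'$) is exactly the point the paper leaves implicit, and it is sound.
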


\begin{proof}
Suppose otherwise that $G$ contains a separating cycle $C'$, where the length of $C'$ is 3 or 5, or $C'$ is a good 6-cycle.  Let $G_1$ be the subgraph of $G$ induced by $C'$ together with $ext(C')$, and $G_2$ the subgraph of $G$ induced by $C'$ together with $int(C')$.  Note that $C$ is contained in $G_1$.  By the minimality of $G$, $\phi$ superextends to a $(2,0,0)$-coloring $\phi_{G_1}$ of $G_1$.  Now $\phi_{G_1}$ restricted to $C'$ is a $(2,0,0)$ precoloring of $C'$, and again applying the minimality of $G$, it superextends to a $(2,0,0)$-coloring of $G_2$.  The union of these two colorings is a superextension of $\phi$ to $G$, a contradiction.
\end{proof}

The lack of separating short cycles provides additional information about the structure of $C$.  The next lemma follows~\cite{Xu}.

\begin{lemma}
\label{ChordsAndNeighbors}
The cycle $C$ is chordless, and for nonadjacent $x,y\in C$, $N(x)\cap N(y)\subseteq V(C)$.
\end{lemma}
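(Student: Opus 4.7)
The plan is to treat both parts of the lemma simultaneously by splitting on $r = |V(C)|$ and combining the minimality of $(G,C)$ with Lemma~\ref{NoSeparating} so as to either produce a forbidden $4$-cycle directly or reduce to a superextension in a smaller graph.

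For the chordless claim, $r = 3$ is vacuous. If $r = 5$, any chord closes with one arc of $C$ to give a $4$-cycle, contradicting $G \in \mathcal{G}$. If $r = 6$, a chord at cyclic distance $3$ produces two $4$-cycles and is excluded for the same reason, so the only remaining possibility is a chord $v_1v_3$ at cyclic distance $2$, writing $C = v_1v_2\cdots v_6$. This chord yields a triangle $T = v_1v_2v_3$ and a $5$-cycle $C' = v_1v_3v_4v_5v_6$ in $G$. By Lemma~\ref{NoSeparating}, $T$ is not separating, so its interior is empty and $v_2$ has degree exactly $2$ with neighbors $v_1$ and $v_3$. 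I would then pass to $G' = G - v_2 \in \mathcal{G}$; by minimality, $C'$ is superextendable in $G'$, so the restriction $\phi|_{V(C')}$ superextends to a coloring $\phi_{G'}$ of $G'$. Reinserting $v_2$ with color $\phi(v_2)$ produces a candidate coloring $\phi_G$, and the superextension property of $\phi_{G'}$ confines the same-color neighbors of $v_1$ and $v_3$ in $G - v_2$ to $V(C)$. Since $\phi$ was already compatible with the chord edge $v_1v_3$, the color degrees at $v_1$ and $v_3$ in $G$ stay within the $(2,0,0)$ budget, and $\phi_G$ is a superextension of $\phi$ to $G$, contradicting our choice of counterexample.

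For the common-neighbor claim, $r = 3$ is vacuous. When $r = 5$ with $x,y$ at cyclic distance $2$, or $r = 6$ with $x,y$ at cyclic distance $2$, a common neighbor $w \notin V(C)$ together with the unique $C$-vertex between $x$ and $y$ forms a $4$-cycle, excluded. The remaining case is $r = 6$ with $x = v_1$, $y = v_4$ at cyclic distance $3$ and common neighbor $w \notin V(C)$. The path $v_1wv_4$ divides the interior of $C$ into two regions bounded by the $5$-cycles $C_1 = v_1v_2v_3v_4w$ and $C_2 = v_1wv_4v_5v_6$; by Lemma~\ref{NoSeparating} neither cycle is separating, so both regions are empty and $w$ is the only vertex interior to $C$. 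Any edge $wv_i$ with $i \in \{2,3,5,6\}$ would complete one of the $4$-cycles $v_1v_2v_3w$, $v_2v_3v_4w$, $v_4v_5v_6w$, or $v_1v_6v_5w$, all forbidden, so $w$ has degree exactly $2$ in $G$. Assigning $w$ any color in $\{1,2,3\}\setminus\{\phi(v_1),\phi(v_4)\}$ then yields a superextension of $\phi$ to $G$, again contradicting our choice of counterexample.

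The main obstacle lies in the chord subcase when $r = 6$, where no immediate structural contradiction surfaces: one must merge the empty-interior conclusion for the triangle $T$ (via Lemma~\ref{NoSeparating}) with the inductive superextension of $C'$ in $G - v_2$, checking that reinserting $v_2$ does not inflate the color-$1$ degree at $v_1$ or $v_3$ past $c_1 = 2$ — a verification that rests on $\phi$ already respecting the chord edge $v_1v_3$ inside the induced subgraph $G[V(C)]$.
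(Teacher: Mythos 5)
Your case analysis matches the paper's and your conclusions are all correct, but on the one nontrivial subcase --- a chord $v_1v_3$ of a $6$-cycle $C$ --- you take a genuinely different and longer route. The paper's argument there is purely structural: since $(G,C)$ is a counterexample, $V(G)\neq V(C)$, so some vertex of $G$ lies strictly inside one of the two regions into which the chord cuts the disk bounded by $C$; hence either the triangle $v_1v_2v_3$ or the $5$-cycle $C'=v_1v_3v_4v_5v_6$ is a separating cycle, contradicting Lemma~\ref{NoSeparating} with no coloring argument at all. You apply Lemma~\ref{NoSeparating} only to the triangle and then run a delete-$v_2$ / superextend-$C'$ / reinsert argument; note that applying the same lemma to $C'$ as well would have given you $int(C')=\emptyset$, hence $V(G)=V(C)$, and short-circuited the reduction entirely. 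Your route does work (and uniformly covers the degenerate case $V(G)=V(C)$, which the paper quietly excludes), but it carries the extra burden you flag at the end: you need $\phi$ restricted to $V(C')$ to be a legal $(2,0,0)$-coloring of $C'$ \emph{including the chord}, and you need the reinsertion of $v_2$ not to push the color-$1$ degree of $v_1$ or $v_3$ above $2$ (e.g.\ when $\phi(v_1)=\phi(v_2)=\phi(v_3)=\phi(v_6)=1$). Both hold only under the reading that the precoloring is a $(2,0,0)$-coloring of the induced subgraph $G[V(C)]$ rather than of the bare cycle --- the same implicit convention the paper leans on --- so you should state that assumption explicitly rather than gesture at ``compatibility with the chord edge.'' Your treatment of the common-neighbor claim is the paper's argument verbatim, with the welcome bonus that you actually carry out the ``easy to verify'' check that the graph on $V(C)\cup\{w\}$ with $d(w)=2$ admits the required superextension.
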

\begin{proof}
The conclusion is trivial if $r=3$; suppose that $r=5$ or $r=6$.

If $C$ has a chord and $r=5$, then the chord separates $C$ into a 3-cycle and a 4-cycle, contradicting $G \in \mathcal{G}$. If $r=6$, then the chord would create either a 4-cycle and a 5-cycle, again contradicting $G \in \mathcal{G}$, or a 3-cycle and a 5-cycle. In the second case, since $V(C) \neq V(G)$, one of these cycles would have to be a separating cycle, contradicting Lemma~\ref{NoSeparating}.

Now consider nonadjacent $x,y$ in $V(C)$. Suppose that there exists $v\in N(x)\cap N(y)$ where $v\not\in V(C)$. If $r=5$, then $C$ together with $xvy$ forms a 4-cycle and a 5-cycle, which is impossible.  If $r=6$, then $C$ together with $xvy$ forms either a 6-cycle and a 4-cycle, which is impossible, or two 5-cycles.  Neither of the 5-cycles can be separating, but then $V(G)=C \cup \{v\}$, and it is easy to verify that such a graph is not a counterexample.  Therefore $x$ and $y$ can have no such neighbor.
\end{proof}

Now we introduce some definitions we use in the rest of the paper.  In a $(2, 0,0)$-coloring of $G$, a vertex $v$ is {\em $1$-saturated} if it is colored with $1$ and has two neighbors of color $1$; otherwise, it is called {\em nicely colored} (i.e.,  it is colored with $2$ or $3$, or it is colored with $1$ but not $1$-saturated).  If $v$ is nicely colored, then a neighbor of $v$ can be (re)colored with 1.  If $v$ has at most three colored neighbors, then {\it nicely recoloring $v$} means $v$ is either recolored with color 2 or 3 (if one of those colors is available), or $v$ has at most one neighbor with color 1, and $v$ remains color 1.

\begin{lemma}
Suppose $\phi_{G'}$ is a superextension of $\phi$ to $G' \subset G$, and $v \in V(G')$.  If $v$ is nicely recolored, then the extension remains a superextension.
\end{lemma}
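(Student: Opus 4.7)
The plan is to check that $\phi'_{G'}$ satisfies both requirements for being a superextension of $\phi$: (i) it is a valid $(2,0,0)$-coloring of $G'$, and (ii) for each $u\in V(C)$, no neighbor of $u$ in $V(G')\setminus V(C)$ shares its color. Because only the color of $v$ changes, every check is local around $v$. If $v\in V(C)$ the precoloring cannot be altered, so there is nothing to do; hence I may assume $v\notin V(C)$.

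I would then treat the three cases of the definition separately. Suppose first that $v$ is recolored with color $2$. Because color $2$ is ``available'' at $v$, no neighbor of $v$ in $G'$ has color $2$; since $\phi_{G'}$ extends $\phi$, this also covers any neighbor of $v$ lying in $V(C)$. Thus the $(0)$-constraint for color $2$ is satisfied at $v$, and at any neighbor $w$ of $v$ the count of same-color neighbors of $w$ can only decrease (by one if $w$ had $v$'s old color, and otherwise be unchanged, since the new color $2$ does not match $w$'s color). So the $(2,0,0)$-constraint at each $w$ is preserved. Condition (ii) could only be newly violated at a vertex $u\in V(C)$ adjacent to $v$, and for any such $u$ we have $\phi(u)\neq 2$ by availability, so (ii) still holds. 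The case when $v$ is recolored with color $3$ is symmetric. In the remaining case, $v$ remains color $1$, so $\phi'_{G'}=\phi_{G'}$ and there is nothing to verify.

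I do not expect a real obstacle; the lemma is essentially bookkeeping. The one point worth emphasizing is that ``available at $v$'' must be read as ``available with respect to all neighbors of $v$ in $G'$, including those in $V(C)$,'' which is automatic from the fact that $\phi_{G'}$ extends $\phi$. Once this is recognized, both the $(2,0,0)$-property and the superextension condition follow from a one-line local check in each case.
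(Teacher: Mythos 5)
Your proposal is correct and takes essentially the same approach as the paper: the paper's (much terser) proof just isolates the key point that a nice recoloring never changes a vertex's color \emph{to} $1$, so the only color that could threaten the superextension condition is already handled by the original superextension, while the $(2,0,0)$-validity follows from availability exactly as you spell out. Your extra bookkeeping at the neighbors of $v$ and the explicit reading of ``available'' (with respect to all neighbors in $G'$, including those on $C$) are consistent with, and merely more detailed than, the paper's argument.
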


\begin{proof}
Since the color of $v$ is not changed to 1, $v$ can be color 1 only if $\phi_{G'}(v)=1$, in which case $v$ must not have a neighbor of color 1 on $C$.
\end{proof}

Suppose that $x$ is a vertex of a face $f$.  A neighbor $v$ of $x$ is an {\it outer neighbor} (with respect to $f$) if $v$ is not on $f$.  A $k^+$-vertex (or $k^-$-vertex) in $G$ is a vertex of degree at least (or at most) $k$.  { A {\em $(k_1, k_2, \ldots, k_t)$-cycle} is a $t$-cycle whose vertices' degrees are $k_1, \ldots, k_t$, respectively. }
A vertex is {\em triangular} if it lies on a 3-face, otherwise it is called {\em nontriangular}.

Let $f$ be a $3$-face in $int(C)$.  If $v\not\in V(f)$ is adjacent to a $3$-vertex $x$ on $f$, then $f$ is called a {\em pendant face to $v$},  and $x$ and $v$ are pendant neighbors to each other. %

A $3$-vertex $v$ on a $5$-face in $int(C)$ is called {\em special} if its two neighbors on the face are $4^-$-vertices.   If $x$ is a $5^+$-vertex with a special neighbor $v$, we call the pendant 5-face containing $v$ a {\it pendant special 5-face} to $x$.   (See Figure~\ref{fig:special}.)   A $3$-vertex in $int(C)$ is {\it potentially special} if two of its neighbors are $4^-$-vertices in $int(C)$. Note that a special vertex is also potentially special.

\begin{figure}[H]
\includegraphics[scale=0.14]{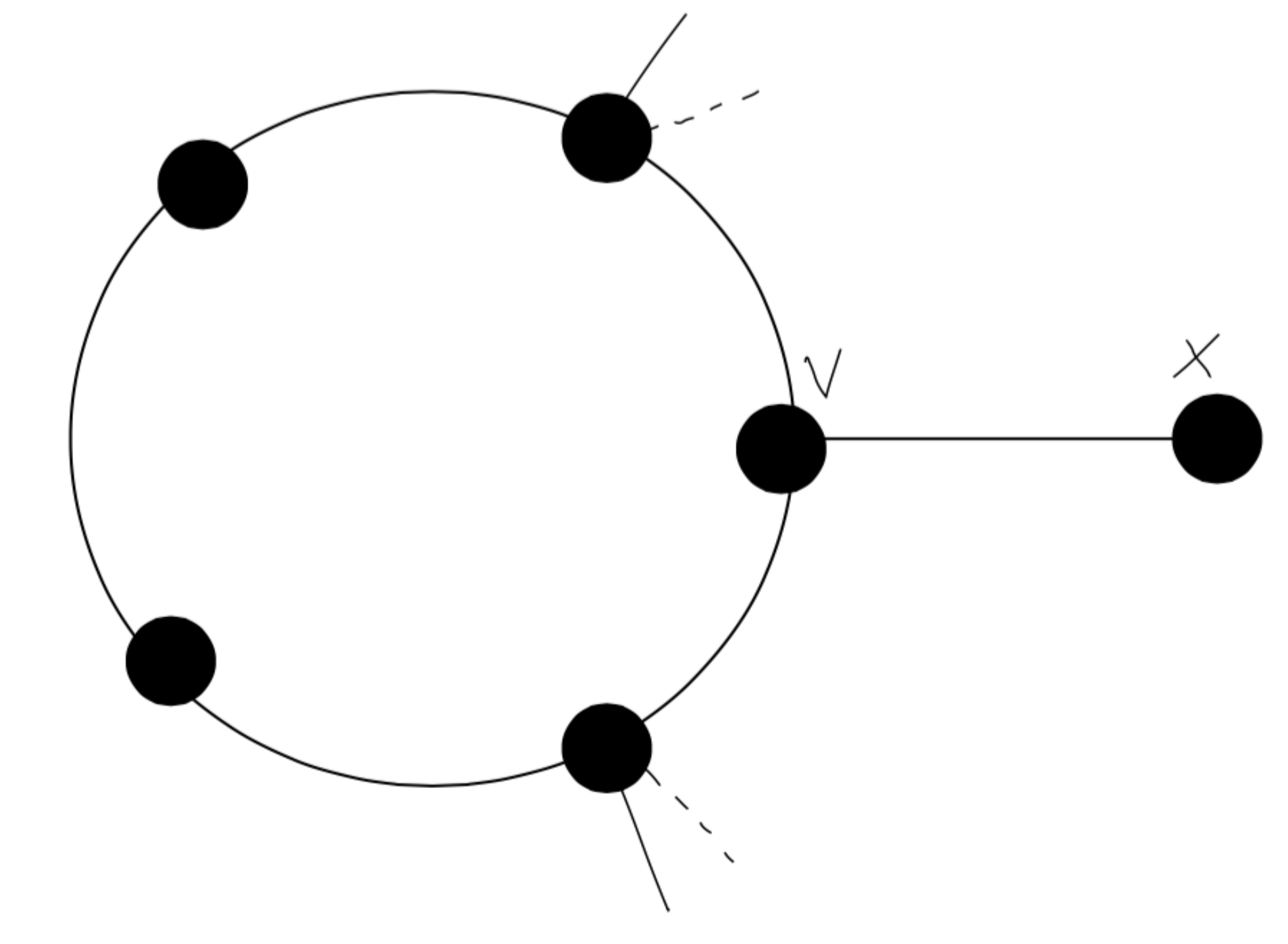}
\caption{Special vertex $v$ and a pendant special $5$-face to $x$.}
\label{fig:special}
\end{figure}

\section{Reducible Configurations}\label{ReducibleConfigurations}

\begin{lemma}
\label{No2V}
There are no $2^-$-vertices in $int(C)$.
\end{lemma}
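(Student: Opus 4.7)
The plan is a standard deletion reducibility argument. Suppose for contradiction that $v \in int(C)$ has $d_G(v) \le 2$, and set $G' := G - v$. Deleting a vertex preserves planarity, 4-cycle-freeness, and the condition $d_\triangle \ge 2$, so $G' \in \mathcal{G}$. Moreover $v \notin V(C)$, so $C$ is still a triangle, $5$-cycle, or good $6$-cycle in $G'$. Hence $(G', C)$ satisfies the hypotheses of Theorem~\ref{Supext}, and by the minimality of $G$ the precoloring $\phi$ superextends to a $(2,0,0)$-coloring $\phi'$ of $G'$.

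Next I extend $\phi'$ across $v$. Let $S = \{\phi'(u) : u \in N_G(v)\}$, so that $|S| \le d_G(v) \le 2$. Pick any $c \in \{1,2,3\} \setminus S$ and set $\phi''(v) := c$, keeping all other colors as in $\phi'$. Because $c \notin S$, the vertex $v$ has $0$ neighbors of color $c$, and no neighbor of $v$ has color $c$ either, so no monochromatic adjacency is created and $\phi''$ is still a $(2,0,0)$-coloring of $G$. The superextension condition is preserved automatically: for $x \in V(C)$ with $v \in N_G(x)$ we have $\phi(x) \in S$ but $\phi''(v) = c \notin S$, so $\phi''(v) \neq \phi(x)$; for all other pairs the condition is inherited from $\phi'$. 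This produces a superextension of $\phi$ to $G$, contradicting the choice of $(G, C)$.

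The only thing to verify carefully is that a single color $c \notin S$ simultaneously satisfies both the $(2,0,0)$ constraint at $v$ and the superextension constraint with any adjacent vertex on $C$; both follow from the single condition $c \notin S$, and this is possible precisely because $d_G(v) \le 2 < 3$. There is no real obstacle here — this is the base case of the reducibility arguments — but it sets the template (\emph{delete, superextend to the smaller graph by minimality, then re-insert}) that the subsequent reducibility lemmas will refine, typically by also invoking the nice-recoloring flexibility afforded by the preceding lemma.
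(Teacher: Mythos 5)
Your proof is correct and follows exactly the paper's argument: delete $v$, superextend to $G-v$ by minimality, and then give $v$ a color avoiding the at most two colors on $N(v)$, which simultaneously keeps the coloring proper at $v$ and preserves the superextension condition for any neighbor of $v$ on $C$. The paper's own proof is a two-line version of the same idea.
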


\begin{proof}
Let $v$ be a $2^-$-vertex in $int(C)$.  Then $\phi$ superextends to $G-v$ by the minimality of $G$.   Now $v$ can be properly colored, a contradiction.
\end{proof}

The following lemma is a foundational one for our paper, and similar lemmas appear in other related results (see for example \cite{LLY15a}). We include the proof for completeness.

\begin{lemma}\label{3vert1}
Every 3-vertex in $int(C)$ is adjacent to at least one $5^+$-vertex or a vertex on $C$.
\end{lemma}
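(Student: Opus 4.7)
The plan is to use a standard deletion-and-extension argument. Suppose for contradiction that some $3$-vertex $v\in int(C)$ has all three neighbors $v_1,v_2,v_3$ among the $4^-$-vertices of $int(C)\setminus V(C)$; by Lemma~\ref{No2V} each $v_i$ has degree $3$ or $4$. The first key step is to apply the minimality of $(G,C)$ to the smaller graph $G-v\in\mathcal{G}$: the precoloring $\phi$ superextends to some $(2,0,0)$-coloring $\phi'$ of $G-v$, and the aim is to extend $\phi'$ to $v$, producing a superextension of $\phi$ on $G$ and contradicting the choice of $(G,C)$. Note that the superextension condition at $v$ itself is automatic, since $v$ has no neighbor on $C$.

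The next step is a short case analysis on the multiset $\{\phi'(v_1),\phi'(v_2),\phi'(v_3)\}$. If some color $c\in\{2,3\}$ is missing, set $\phi'(v):=c$. Otherwise both $2$ and $3$ appear; if color $1$ does not appear, set $\phi'(v):=1$, which is valid because $v$ has no color-$1$ neighbor. Otherwise $\{\phi'(v_1),\phi'(v_2),\phi'(v_3)\}=\{1,2,3\}$; relabel so that $\phi'(v_i)=i$. If $v_1$ is not $1$-saturated, setting $\phi'(v):=1$ again succeeds. So assume $v_1$ is $1$-saturated, with two color-$1$ neighbors $u_1,u_2$ in $G-v$.

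The heart of the argument is then to nicely recolor $v_1$ and afterwards set $\phi'(v):=1$. First, observe that the superextension property forces $u_1,u_2\notin V(C)$: a color-$1$ vertex of $C$ adjacent to the color-$1$ vertex $v_1\notin V(C)$ would violate $\phi(u_i)\ne\phi'(v_1)$. If $\deg(v_1)=4$, let $u_3$ denote the remaining neighbor of $v_1$ in $G-v$; since $v_1$ already has two color-$1$ neighbors, $\phi'(u_3)\in\{2,3\}$. Now recolor $v_1$ with $c\in\{2,3\}\setminus\{\phi'(u_3)\}$ (or any $c\in\{2,3\}$ if $\deg(v_1)=3$). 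The only neighbors of $v_1$ in $G-v$ are $u_1,u_2$ (color $1\ne c$) and possibly $u_3$ (color $\ne c$ by construction), so the recoloring yields a valid $(2,0,0)$-coloring; and since $u_1,u_2\notin V(C)$, the only possible $C$-neighbor of $v_1$ is $u_3$, whose color is also $\ne c$, so the superextension property is preserved. Finally, $v$ now has no color-$1$ neighbor, so setting $\phi'(v):=1$ completes a superextension of $\phi$ to $G$, the desired contradiction.

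The main obstacle to watch for is the possibility that $v$ lies on a triangle $vv_iv_j$. Since $G$ has no $4$-cycles, $v$ lies on at most one triangle, so at most one pair from $\{v_1,v_2,v_3\}$ is adjacent. If, say, $v_1v_2$ is an edge, then $v_2$ plays the role of the non-color-$1$ neighbor $u_3$ of $v_1$, forcing $\deg(v_1)=4$; the recipe above selects $c=3$, and since $v_1v_3$ is not an edge, this recoloring does not conflict with $v_3$. The remaining triangle subcases are handled symmetrically.
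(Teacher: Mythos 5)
Your proof is correct and follows essentially the same route as the paper's: delete $v$, superextend to $G-v$ by minimality, and observe that the unique color-$1$ neighbor must be $1$-saturated with neighbors of colors $2$ and $3$ (hence degree $\ge 5$) or else it can be recolored so that $v$ receives color $1$. Your version just works out explicitly the recoloring and the superextension/triangle bookkeeping that the paper compresses into a one-line degree count.
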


\begin{proof}
Let $v\in int(C)$ be a $3$-vertex not adjacent to a vertex of $C$. By the minimality of $G$, $\phi$ superextends to $G-v$.  Since this coloring cannot extend to $G$, the colors $1,2,3$ must appear in $N(v)$, and the vertex $u\in N(v)$ colored with $1$ is neither nicely colored, nor can it be recolored with $2$ or $3$.  Hence $u$ is adjacent to vertices colored with $2$ and $3$ and to two vertices of color $1$, thus $d(u)\ge 5$.
\end{proof}

\begin{lemma}
\label{2Colored3Face}
Suppose $\phi$ has been superextended to some subgraph of $G$, and let $v\in int(C)$ be a vertex that has exactly two colored neighbors, $v_1$ and $v_2$, both in $int(C)$.  Then $v$ can be recolored with color $1$, unless one of the following holds:
\begin{enumerate}
\item  $v_1$ and $v_2$ are adjacent and $d(v_1)+d(v_2)\ge 9$, or
\item  $v_1$ and $v_2$ are nonadjacent and one is a $5^+$-vertex.
\end{enumerate}
\end{lemma}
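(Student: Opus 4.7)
The plan is to show $v$ can be assigned color $1$, possibly after nicely recoloring $v_1$ or $v_2$ (an operation that preserves superextension by the preceding lemma). Since $v$'s only colored neighbors are $v_1$ and $v_2$, setting $\phi(v) = 1$ fails only when some $v_i$ is already colored $1$ and $1$-saturated: then $v$ would be a third color-$1$ neighbor of $v_i$, violating the $(2,0,0)$ constraint. The goal thus reduces to removing any such $1$-saturation by preliminary nice recolorings.

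The central observation is: a $1$-saturated vertex $v_i$ with at most $3$ colored neighbors can always be nicely recolored to color $2$ or $3$, since two of its $\le 3$ colored neighbors are color $1$, leaving at most one colored $2$ or $3$, so at least one of $\{2,3\}$ is available. As $v$ is uncolored, $d(v_i) \le 4$ guarantees at most $3$ colored neighbors at $v_i$. When (2) fails, $v_1, v_2$ are nonadjacent with both $d(v_i) \le 4$, and we independently nicely recolor any $1$-saturated $v_i$. When (1) fails, $v_1, v_2$ are adjacent with $d(v_1)+d(v_2) \le 8$, forcing $\min\{d(v_1), d(v_2)\} \le 4$; WLOG $d(v_1) \le 4$. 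The key maneuver is: if $v_1$ is $1$-saturated, nicely recoloring $v_1$ to $2$ or $3$ simultaneously reduces $v_2$'s color-$1$ count by one (since $v_1 \sim v_2$ and was colored $1$), resolving any $1$-saturation of $v_2$ at the same time. The subcases in which only $v_2$ is $1$-saturated are handled similarly whenever $d(v_2)\le 4$ (apply the observation to $v_2$) or $d(v_2)=5$ with $\phi(v_1)=1$ (then $d(v_1)=3$ and $v_1$ is nicely recolorable, again dropping $v_2$'s count).

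The main technical obstacle is the residual subcase: $v_1, v_2$ adjacent with $d(v_2)=5$, $v_2$ is $1$-saturated, $d(v_1)=3$, and $\phi(v_1)\in\{2,3\}$. Here changing $v_1$'s color does not reduce $v_2$'s color-$1$ count, and nicely recoloring $v_2$ alone may be blocked if both $2$ and $3$ appear among $v_2$'s four colored neighbors. The resolution exploits that $d(v_1)=3$ leaves $v_1$ with at most one colored neighbor besides $v_2$: one first toggles $\phi(v_1)$ within $\{2,3\}$ to free the blocking color at $v_2$, then nicely recolors $v_2$ to that freed color, and finally sets $\phi(v)=1$. Confirming this two-step maneuver is always executable, including the delicate situation where $v_1$'s one other colored neighbor forbids the intended toggle and an auxiliary local move is required, is the heart of the proof.
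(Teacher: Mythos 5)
Your overall strategy is the same as the paper's: the only obstruction to giving $v$ color $1$ is a $1$-saturated neighbor, such a neighbor with few colored neighbors can be nicely recolored, and this forces the exceptional configurations of the lemma. The nonadjacent case and the adjacent cases where the saturated vertex has degree at most $4$, or where the degree-$3$ neighbor is colored $1$, are all handled correctly and match the paper's reasoning (the paper phrases it as: the blocking vertex must see colors $1,1,2,3$, hence has degree at least $5$).

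However, you explicitly leave the last subcase unfinished: $v_1v_2\in E(G)$, $d(v_2)=5$, $v_2$ is $1$-saturated with color multiset $\{1,1,c,\bar c\}$ on its colored neighbors, $d(v_1)=3$, $\phi(v_1)=c\in\{2,3\}$, and the third neighbor $w$ of $v_1$ is colored $\bar c$, so the toggle of $v_1$ from $c$ to $\bar c$ is blocked. Writing ``an auxiliary local move is required \dots\ is the heart of the proof'' names the difficulty without resolving it, and this is exactly the point where the paper does something specific: uncolor $v_1$; now $v_2$ sees only $\{1,1,\bar c\}$ among its colored neighbors, so recolor $v_2$ properly with $c$; then $v_1$ sees only $c$ (on $v_2$) and $\bar c$ (on $w$), so color $v_1$ with $1$, which is proper; finally give $v$ color $1$. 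One checks that $v$ and $v_1$ each end with a single color-$1$ neighbor, so no vertex is over-saturated, and all recolorings are proper or nice, preserving the superextension. With this three-step move supplied, your argument is complete and coincides with the paper's proof (the paper states it as ``remove the color of $v_2$ and then properly recolor $v_1$ and $v_2$ in order,'' with the roles of the degree-$3$ and degree-$5$ vertices swapped relative to your labeling).
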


\begin{proof}
Suppose that $v$ cannot be recolored with $1$. Then if we recolor $v$ with $1$, some neighbor of $v$ (say $v_1)$ that is colored with $1$ will have three neighbors of color $1$.  Note that color $2$ and $3$ must appear in $N(v_1)$, for otherwise we recolor $v_1$ with the absent color, so the degree of $v_1$ is at least $5$.  Assume further that $v_1v_2\in E(G)$.  If $d(v_1)+d(v_2)\le 8$, then $d(v_1)=5$ and $d(v_2)=3$.  As there are at most two different colors in $N(v_2)$, we can properly recolor $v_2$ (if its color is $1$) or remove the color of $v_2$ and then properly recolor $v_1$ and $v_2$ in order(if its color is $2$ or $3$). Now in both cases, we can recolor $v$ with $1$.
\end{proof}

\begin{observation}
Let $v$ be a special or potentially special vertex with $N(v)$ in $int(C)$. By Lemma~\ref{3vert1}, $v$ must be adjacent to a $5^+$-vertex.  By Lemma~\ref{2Colored3Face}, in any precoloring of $G$ in which the $5^+$-neighbor $v$ has not been colored, $v$ can be recolored with 1.
\end{observation}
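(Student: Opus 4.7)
The plan is to unpack the two cited lemmas and verify that the hypotheses of both line up cleanly with the structure of a (potentially) special vertex.

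First, I would identify the $5^+$-neighbor. Let $v$ be a (potentially) special vertex with $N(v)\subseteq int(C)$, and write $N(v)=\{v_1,v_2,u\}$ where $v_1$ and $v_2$ are the two $4^-$-neighbors guaranteed by the definition. Since no neighbor of $v$ lies on $C$, Lemma~\ref{3vert1} forces $v$ to have a $5^+$-neighbor; this neighbor must be $u$, because $d(v_1),d(v_2)\le 4$. Thus the $5^+$-neighbor of $v$ is uniquely determined, and it is distinct from $v_1,v_2$.

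Next I would show that $v$ can be recolored with $1$ in any precoloring in which $u$ is not colored. In such a precoloring, $v$ has at most two colored neighbors, and these lie among $\{v_1,v_2\}$. If zero or one of them is colored, we can directly assign $v$ color $1$ without creating a $1$-saturated neighbor. If both $v_1$ and $v_2$ are colored, apply Lemma~\ref{2Colored3Face} to $v$ with colored neighbors $v_1,v_2$. The two exceptional cases of that lemma are: $v_1v_2\in E(G)$ with $d(v_1)+d(v_2)\ge 9$, or $v_1v_2\notin E(G)$ with one of $v_1,v_2$ a $5^+$-vertex. Both are excluded here because $d(v_1)+d(v_2)\le 4+4=8$ and neither $v_i$ is a $5^+$-vertex. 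Hence Lemma~\ref{2Colored3Face} yields that $v$ can be recolored with $1$.

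There is essentially no obstacle here; the observation is an immediate synthesis of the two preceding lemmas. The only point requiring a brief mental check is the boundary case in Lemma~\ref{2Colored3Face}: writing out that $d(v_1)+d(v_2)\le 8<9$ and that neither $v_i$ is $5^+$ is exactly what rules out both exceptional configurations, so the conclusion goes through uniformly whether or not $v_1v_2$ is an edge.
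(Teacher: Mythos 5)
Your proof is correct and takes the same route as the paper, which offers no separate argument beyond citing Lemma~\ref{3vert1} (to locate the unique $5^+$-neighbor among the three neighbors, since the other two are $4^-$ by definition) and Lemma~\ref{2Colored3Face} (whose two exceptional cases each force a $5^+$-vertex among the colored neighbors and are therefore excluded). The only cosmetic point is your zero-or-one-colored-neighbor case: an already $1$-saturated $4^-$-neighbor could in principle obstruct assigning $v$ color $1$, but the same degree count as in the proof of Lemma~\ref{2Colored3Face} shows such a neighbor can be properly recolored, so the conclusion stands.
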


\begin{lemma}\label{Max1}
Let $v$ be a vertex in $int(C)$. If $v$ is adjacent to $m$ special vertices and $t$ pendant $(3,3,5^-)$- or $(3,4,4)$-faces, then $m+t\le d(v)-2$.
\end{lemma}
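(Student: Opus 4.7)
The plan is to argue by contradiction. Suppose $v \in int(C)$ has degree $d := d(v)$ and satisfies $m + t \ge d - 1$, with special neighbors $u_1, \ldots, u_m$ and 3-vertices $x_1, \ldots, x_t$ on pendant $(3,3,5^-)$- or $(3,4,4)$-faces of $v$. First I invoke the minimality of $(G,C)$ applied to $G - v$, producing a $(2,0,0)$-superextension $\phi'$ of $\phi$. The goal is to extend $\phi'$ to all of $G$ by recoloring each $u_i$ and each $x_j$ to color $1$, then coloring $v$ with either color $2$ or $3$, thus contradicting the assumption that $(G,C)$ is a minimum counterexample.

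To justify recoloring each $u_i$ to $1$, observe that in $\phi'$ the vertex $u_i$ has exactly two colored neighbors (its two non-$v$ neighbors). When $v$ is the off-face neighbor of $u_i$, these are the two face-neighbors of $u_i$ on its $5$-face, both $4^-$-vertices by the definition of \emph{special}; consequently neither condition of Lemma~\ref{2Colored3Face} applies (condition (1) requires degree sum at least $9$; condition (2) requires a $5^+$-vertex), so $u_i$ can be recolored with $1$. For each $x_j$, the two non-$v$ neighbors are the remaining two vertices $y_j, z_j$ of the triangle $f_j$; they are adjacent with $d(y_j) + d(z_j) \le 8$ in both allowed face types $(3,3,5^-)$ and $(3,4,4)$, so condition (1) again fails and (2) is vacuous, allowing $x_j$ to be recolored with $1$. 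Since Lemma~\ref{2Colored3Face}'s hypotheses depend only on vertex degrees, which are unchanged under recoloring, these updates can be performed sequentially without interference.

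Once every $u_i$ and $x_j$ is colored $1$, the hypothesis $m + t \ge d - 1$ implies that $v$ has at most one remaining neighbor $w$, whose color under the updated coloring is some $c \in \{1, 2, 3\}$. Assigning $v$ a color from $\{2, 3\} \setminus \{c\}$ (a nonempty set) gives $v$ no neighbor of the same color and does not over-saturate any color-$1$ vertex. The superextension property is preserved because any $C$-neighbor of $v$ coincides with $w$ (the $u_i, x_j$ all lie in $int(C)$). Hence $\phi'$ extends to a $(2,0,0)$-superextension of $\phi$ on $G$, contradicting the minimality of $(G,C)$.

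The main obstacle is handling the sub-case where $v$ itself lies on the $5$-face of some special $u_i$, which forces $d(v) \le 4$. In this configuration one of the two non-$v$ neighbors of $u_i$ is its off-face neighbor, which by Lemma~\ref{3vert1} is either a $5^+$-vertex or lies on $C$; Lemma~\ref{2Colored3Face}(2) may then hold and block the direct recoloring of $u_i$. Resolving this sub-case requires a separate argument that combines the strict degree restriction $d(v) \le 4$ with $m + t \ge d - 1$, the absence of $4$-cycles (which via Lemma~\ref{ChordsAndNeighbors} limits shared neighborhoods), and $d_\triangle(G) \ge 2$, to rule out the offending configuration.
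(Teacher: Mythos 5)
Your main argument is exactly the paper's proof of Lemma~\ref{Max1}: delete $v$, superextend $\phi$ to $G-v$ by minimality, recolor the $m+t$ special or pendant neighbors of $v$ with color $1$ via Lemma~\ref{2Colored3Face}, and then give $v$ a color from $\{2,3\}$ avoiding the color of the at most one remaining neighbor. Your verification that neither alternative of Lemma~\ref{2Colored3Face} can block these recolorings (for a special neighbor, the two colored neighbors are nonadjacent $4^-$-vertices; for a pendant triangle vertex, they are adjacent with degree sum at most $8$) is correct and is in fact more explicit than the paper, which asserts the recoloring without comment.

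The one loose end is the sub-case you flag but do not close: $v$ a $4^-$-vertex that is itself a face-neighbor of a special vertex $u_i$, so that after deleting $v$ the two colored neighbors of $u_i$ are nonadjacent and one of them (the off-face neighbor of $u_i$) may be a $5^+$-vertex or lie on $C$, in which case Lemma~\ref{2Colored3Face} no longer delivers the recoloring. Writing that ``resolving this sub-case requires a separate argument'' is not that argument, so as a proof of the lemma as literally stated your write-up is incomplete. To be fair, the paper's own proof silently passes over this case as well, and in every application of Lemma~\ref{Max1} the vertex $v$ is a $5^+$-vertex, which by the definition of special (both face-neighbors are $4^-$-vertices) cannot lie on the $5$-face of any of its special neighbors; so the troublesome configuration never occurs where the lemma is used. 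You have identified a genuine imprecision in the statement, but you should either restrict the hypothesis (count only special neighbors for which $v$ is the off-face pendant neighbor) or actually supply the missing argument rather than gesture at it.
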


\begin{proof}
Suppose to the contrary, that there exists some vertex $v\in int(C)$ with $m+t> d(v)-2$. Consider $G'=G\backslash\{v\}$. We know that $G' \in \mathcal{G}$, so $\phi$ superextends to a coloring $\phi_{G'}$ of $G'$. By Lemma~\ref{2Colored3Face}, the $m+t$ pendant or special vertices in $N(v)$ can be recolored with color 1, leaving at most one vertex in $N(v)$ with a different color.   Now either color 2 or 3 is available to properly color $v$, superextending $\phi$ to $G$, a contradiction.
\end{proof}

\begin{lemma}\label{5Face}
Let $(v_1, \dots, v_6)$ be an internal $6$-cycle with chord $v_1 v_3 $.  Let $d(v_1)=d(v_3)=3$, and $v_4$ be adjacent to $k$ internal pendant $(3,3,5^-)$-, $(3,4,4)$-faces and special vertices.  If $d(v_2)\le5$ and $f=v_1v_2v_3$ is not the interior triangle of any bad $6$-cycle,  then $d(v_5) \geq 5$, and $k\le d(v_4)-3$.
\end{lemma}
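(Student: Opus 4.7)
The plan is a proof by contradiction. Assume the conclusion fails, so either $d(v_5)\le 4$ or $k\ge d(v_4)-2$. Since Lemma~\ref{Max1} already gives $k\le d(v_4)-2$, the second case forces $k=d(v_4)-2$ exactly.

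Let $G^* = G\setminus\{v_1,v_3\}$. Because $v_1$ and $v_3$ are interior $3$-vertices and vertex deletion preserves membership in $\mathcal{G}$, we have $G^*\in\mathcal{G}$ and $C\subseteq G^*$, so by the minimality of $G$ there is a $(2,0,0)$-superextension $\phi^*$ of $\phi$ to $G^*$. The goal is to extend $\phi^*$ to a $(2,0,0)$-superextension of $G$, contradicting the choice of $G$.

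Following the pattern of the proof of Lemma~\ref{Max1}, I use the observation after Lemma~\ref{2Colored3Face} to recolor with color $1$ each of the $k-1$ special vertices and pendant-face $3$-vertices around $v_4$ that remain in $G^*$ (i.e., all such vertices other than the deleted $v_3$). In the hypothesized case $k=d(v_4)-2$, this leaves $v_4$ with at most two neighbors in $G^*$ not colored $1$, namely $v_5$ and one other vertex $u$. Looking at $\phi^*(v_2),\phi^*(v_4),\phi^*(v_5),\phi^*(v_6),\phi^*(u)$, I then propose to first nicely recolor $v_4$ (and, in the first case where $d(v_5)\le 4$, possibly $v_5$ as well via Lemma~\ref{2Colored3Face}) so as to free a color for $v_3$; then color $v_3$; and finally color $v_1$ using its three colored neighbors $v_2,v_3,v_6$, which is feasible because $v_1$ is only a $3$-vertex and at least one of $\{1,2,3\}$ remains admissible once we have tuned $\phi^*(v_3)$.

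The core obstacle is the case analysis: for each tuple of colors on the listed vertices, one must show either a direct valid assignment or find a nice-recoloring chain that frees the needed color without $1$-saturating any vertex in $\{v_2,v_4,v_5,v_6\}$. The hypothesis $d(v_2)\le 5$ controls the color-$1$ saturation at $v_2$ after $v_1,v_3$ receive their colors (so that $v_2$ can absorb the extra color-$1$ neighbors produced by the extension); the failure of $d(v_5)\ge 5$ (in the first case) provides the flexibility to nicely recolor $v_5$ via Lemma~\ref{2Colored3Face}; and the hypothesis that $f$ is not the interior triangle of a bad $6$-cycle is used precisely to rule out the unique obstruction in which $u,v_6$, and a further neighbor of $v_2$ are matched by disjoint edges to $v_3,v_1,v_2$ respectively, forming the matching edges of a bad $6$-cycle around $f$; in this excluded configuration every recoloring avenue closes off simultaneously. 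Verifying this exceptional case is where the bad-$6$-cycle hypothesis is essential, and I expect it to be the main technical difficulty of the proof.
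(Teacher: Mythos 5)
There is a genuine gap, and it stems from using plain deletion where the paper uses deletion plus an identification. The paper's proof forms $G'$ by deleting $v_1,v_3$ \emph{and identifying $v_4$ with $v_6$} into one vertex $X$; the whole point is to force $\phi_{G'}$ to assign $v_4$ and $v_6$ the same color $\alpha$, and, when $\alpha=1$, to guarantee that at least one of $v_4,v_6$ is nicely colored (since $X$ has at most two color-$1$ neighbors, which are split between $v_4$ and $v_6$ after un-identifying). With your $G^*=G\setminus\{v_1,v_3\}$ you have no control over the colors of $v_4$ and $v_6$: the superextension $\phi^*$ may give, say, $\phi^*(v_2)=2$ and $\phi^*(v_4)=\phi^*(v_6)=1$ with both $v_4$ and $v_6$ $1$-saturated and not nicely recolorable. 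Then $v_3$ (neighbors $v_2,v_4,v_1$) cannot take $1$ or $2$, so it must take $3$; and $v_1$ (neighbors $v_2,v_6,v_3$) can then take none of $1,2,3$. Your proposed repairs do not reach this obstruction: recoloring the $k-1$ other special/pendant neighbors of $v_4$ (which, incidentally, requires first uncoloring $v_4$ so that Lemma~\ref{2Colored3Face} applies) only helps with $v_4$, never with $v_6$; and nicely recoloring $v_5$ when $d(v_5)\le 4$ also does nothing for a saturated $v_6$, nor is it clear how a deletion-only argument would ever produce the conclusion $d(v_5)\ge 5$, since $v_5$ is not adjacent to either deleted vertex.

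A second symptom of the same problem is your account of the bad-$6$-cycle hypothesis. In the paper it is not used to close a coloring case: it is used \emph{structurally}, to show that identifying $v_4$ and $v_6$ creates no $4$-cycle (a $4$-path from $v_4$ to $v_6$ would yield a separating $6$-cycle, which by Lemma~\ref{NoSeparating} must be bad, and the hypothesis together with Lemma~\ref{3vert1} rules this out), so that $G'\in\mathcal{G}$ and minimality applies. In a deletion-only argument $G^*\in\mathcal{G}$ is automatic and the hypothesis is never genuinely needed, which is a strong sign the approach cannot succeed: the bad $6$-cycle of Figure~2 is exactly the configuration the lemma must exclude. To fix the proof you should adopt the identification of $v_4$ and $v_6$, verify $G'\in\mathcal{G}$ using the bad-$6$-cycle hypothesis, and then run the color analysis on $\alpha=\phi_{G'}(X)$ as the paper does.
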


\begin{proof}
Suppose the statement is not true.  Consider the graph $G'$ formed by deleting $v_1$ and $v_3$ and identifying $v_4$ and $v_6$ into a single vertex $X$.

We claim that $G' \in \mathcal{G}$. First of all, we do not create chords in $C$ of $G$ since both $v_4$ and $v_6$ are in $int(C)$. The only path of length 3 from $v_6$ to $v_4$ in $G$ goes through $v_1$ and $v_3$, else there would be a separating 5-cycle in $G$ or a triangle at vertex $v_4$ or $v_6$.  Hence no new triangles are created by identifying $v_4$ and $v_6$.  Further, since $v_1$ and $v_3$ are triangular in $G$, neither $v_4$ nor $v_6$ can be triangular. Thus $d_{\triangle}(G') \ge2$.   It remains to show that $G'$ has no 4-cycles; such a 4-cycle could be created by a path of length 4 in $G$ from $v_4$ to $v_6$.  For such a path to exist, there must be a 4-cycle containing $v_5$, which does not exist, or a separating 6-cycle in $G$. Since $G$ does not contain good separating 6-cycles and $f=v_1v_2v_3$ is not the interior triangle of any bad $6$-cycle, $G$ must contain a 6-cycle containing $v_4, v_5, v_6$ with a triangle inside, pendant to $v_5$. Let $x$ be the pendant neighbor to $v_5$.  But since there are no separating triangles or 5-cycles in $G$, $v_5$ and the two triangular neighbors of $x$ are all degree 3, contradicting Lemma~\ref{3vert1}.  Hence there is no such bad 6-cycle, and  $G' \in \mathcal{G}$.

By the minimality of $G$, we know that there exists a superextension $\phi_{G'}$ of $\phi$ to $G'$.  We claim we can extend $\phi_{G'}$ to a coloring $\phi_G$ that superextends $\phi$ to $G$, a contradiction. Let $\phi_G(x)=\phi_{G'}(x)$ for $x\in V(G')\backslash\{X\}$ and $\phi_G(v_4)=\phi_G(v_6) = \phi_{G'}(X)=\alpha$. Recolor $v_2$ so that it is nicely colored.  It remains to color $v_1$ and $v_3$, and {to verify that $v_5$ is adjacent to at most two neighbors of color $1$ when it is colored with $1$}.

If $\alpha=2$ (or symmetrically 3), then we can properly color $v_3$.  Since $v_1$ has three nicely colored neighbors, it can be colored, completing the superextension to $G$.   Hence we may assume $\alpha=1$.

Suppose that either $\phi_G(v_5) \neq 1$, or $\phi_G(v_5)=1$ and $v_4$ and $v_6$ are the only neighbors of $v_5$ that are colored with 1. At least one of $\{v_4, v_6\}$ is nicely colored with 1; assume by symmetry that $v_4$ is nicely colored. Properly color $v_1$, and now there is a color available for $v_3$, again a contradiction.  If $d(v_5)<5$, then $v_5$ can be recolored in this way, hence $d(v_5) \geq 5$.

Now suppose that $\phi_G(v_5)=1$ and $v_5$ has a third neighbor colored with 1.  (Since $X$ was adjacent to $v_5$ in $G'$, $v_5$ cannot have more than three neighbors with color 1.)  Observe that $v_3$ is a pendant neighbor of $v_4$.  Remove the color of $v_4$, and by Lemma~\ref{2Colored3Face}, recolor with color 1 the $k-1$ other neighbors of $v_4$ that are special vertices or pendant neighbors on $(3,3,5^-)$- or $(3,4,4)$-faces.  Since $d(v_4) \leq k+2$, $v_4$ has at most one neighbor colored from $\{2,3\}$.  Hence $v_4$ can be properly colored.  If $v_2$ is not colored with $1$, then  we can color $v_3$ with $1$ and properly color $v_1$.  If $v_2$ is colored with $1$, then $v_3$ and $v_1$ can be consecutively colored properly. Therefore $d(v_4) > k+2$.
\end{proof}

\begin{lemma}\label{two3face}
Let $v\in int(C)$ be a $5$-vertex with $N(v)=\{v_i:  1\le i\le5\}\subseteq int(C)$. Let $f_i$ be the face containing $v_ivv_{i+1}$ for $i=1,2$. If $v_1$ and $v_3$ are both $3$-vertices that are on internal $(3,3,5^-)$- or $(3,4,4)$-faces  and both $f_1$ and $f_2$ are $5$-faces in $int(C)$, then $d(v_2)\ge 4$.
\end{lemma}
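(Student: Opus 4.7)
The plan is to proceed by contradiction, assuming $d(v_2)=3$. Write $N(v_2)=\{v,a,d\}$ where $a$ and $d$ are the non-$v$ neighbors of $v_2$ on $f_1$ and $f_2$ respectively, so $f_1=v_1vv_2ab$ and $f_2=v_2vv_3cd$ in cyclic order along each face. Since $v$ is adjacent to $v_1\in T_1$ and $d_\triangle(G)\ge 2$, the $5$-vertex $v$ lies on no triangle, so the $(3,3,5^-)$- or $(3,4,4)$-triangle at $v_1$ must be $T_1=v_1be$ (where $e$ is the third neighbor of $v_1$), and similarly $T_3=v_3cf'$ at $v_3$. A useful preliminary observation is that $v_2$ itself cannot lie on any $3$-face: otherwise, since $v$ is not triangular, the triangle at $v_2$ would have to be $v_2ad$, but then $a\sim b$ along $f_1$ puts $v_2ad$ at distance $1$ from $T_1$, contradicting $d_\triangle(G)\ge 2$.

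The main step is to pass to the smaller graph $G'=G-v_2$, which lies in $\mathcal{G}$ since vertex deletion preserves both the absence of $4$-cycles and $d_\triangle\ge 2$. By the minimality of $G$, there is a superextension $\phi'$ of $\phi$ to $G'$, and the plan is to extend $\phi'$ to a superextension of $\phi$ to $G$ by coloring $v_2$. If $\phi'(v)\ne 1$, or if $\phi'(v)=1$ with $\{\phi'(a),\phi'(d)\}\ne\{2,3\}$, then a direct color assignment to $v_2$ succeeds. The delicate case is $\phi'(v)=1$ with $v$ already $1$-saturated in $G'$ and $\{\phi'(a),\phi'(d)\}=\{2,3\}$: here the plan is first to try recoloring $v$ to a color in $\{2,3\}$ (which works when the two non-color-$1$ neighbors of $v$ among $\{v_1,v_3,v_4,v_5\}$ share a color), and otherwise to break $v$'s $1$-saturation by recoloring one of its color-$1$ neighbors.

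The natural candidates for recoloring are $v_1$ and $v_3$, each of which is a $3$-vertex whose two non-$v$ neighbors form a pendant $(3,3,5^-)$- or $(3,4,4)$-triangle; hence $d(b)+d(e)\le 8<9$ and $d(c)+d(f')\le 8<9$, so neither exception of Lemma~\ref{2Colored3Face} applies after temporarily removing the color of $v_1$ or $v_3$, and a short case analysis on the colors of $b,e,c,f'$ in $\phi'$ should recolor it with a color in $\{2,3\}$ whenever such a color is locally available. The main obstacle I anticipate is the sub-case where the two color-$1$ neighbors forcing $v$'s saturation are exactly $v_1$ and $v_3$, while simultaneously $\{\phi'(b),\phi'(e)\}=\{\phi'(c),\phi'(f')\}=\{2,3\}$, leaving no color in $\{2,3\}$ to recolor either of $v_1,v_3$; here the plan is to propagate the recoloring one step further into $T_1$ or $T_3$, using the tight degree bounds from the $(3,3,5^-)$/$(3,4,4)$-hypothesis to ensure the chain of adjustments terminates and yields a superextension of $\phi$ to $G$, contradicting the minimality of $G$.
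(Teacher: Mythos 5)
Your reduction --- delete $v_2$, superextend to $G-v_2$, and color $v_2$ back --- does not give enough leverage, and the gap you flag at the end cannot be closed. The vertex $v_2$ has neighbors $v,a,d$, and the hypotheses give you no control over $a$ and $d$: they may be $6^+$-vertices, or even vertices of $C$ (in which case the superextension requirement forbids $v_2$ from ever receiving their colors and forbids recoloring them). Already your ``direct'' cases are incomplete: if $\phi'(v)=2$, $\phi'(d)=3$ and $\phi'(a)=1$ with $a$ a $1$-saturated $6^+$-vertex, no assignment to $v_2$ works. Worse, in your delicate case take $\phi'(a)=2$, $\phi'(d)=3$, $\phi'(v)=\phi'(v_1)=\phi'(v_3)=1$, $\phi'(v_4)=2$, $\phi'(v_5)=3$, and $\{\phi'(b),\phi'(e)\}=\{\phi'(c),\phi'(f')\}=\{2,3\}$. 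Every color at $v_2$ is blocked; recoloring $v$ is blocked by $v_4,v_5$ (uncontrolled vertices); recoloring $v_1$ or $v_3$ to $2$ or $3$ is blocked by their triangle neighbors; and the proposed propagation into $T_1$ or $T_3$ dies as soon as each of $b,e,c,f'$ has, among its remaining neighbors, a $1$-saturated color-$1$ vertex and a vertex of the other color in $\{2,3\}$ --- their degrees ($3$, $4$ or $5$) leave ample room for this, and nothing in the hypotheses excludes it. Note that Lemma~\ref{2Colored3Face} only certifies recolorability \emph{with color $1$} for a vertex with exactly two colored neighbors; it does not apply to $v_1$ here, whose three neighbors $v,b,e$ are all colored. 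So the ``chain of adjustments'' has no reason to terminate, and the reduction fails.

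The paper's proof instead deletes $v$ and identifies $v_2$ with $v_5$ (its Case 1). This forces two neighbors of $v$ to share a color, and the vertex that must be colored back is $v$ itself --- the one vertex the hypotheses do control: after uncoloring $v$, its neighbors $v_1$ and $v_3$ each have exactly two colored neighbors lying on a pendant $(3,3,5^-)$- or $(3,4,4)$-triangle (degree sum at most $8$), so Lemma~\ref{2Colored3Face} lets them be recolored with $1$, which always frees a color for $v$. Verifying that the identified graph stays in $\mathcal{G}$ uses that $f_1,f_2$ are $5$-faces and Lemma~\ref{NoSeparating}; a separate Case 2, needed when both $v_1$ and $v_3$ lie on interior triangles of bad $6$-cycles (where the identification could create a $4$-cycle), deletes the whole triangle at $v_3$ and runs a saturation analysis. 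Any repair of your argument should be restructured so that the vertex to be re-colored is $v$, not $v_2$.
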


\begin{proof}
Suppose otherwise, that $d(v_2)=3$. Then we discuss the following two cases.

{\bf Case 1:} One of $v_1$ and $v_3$ (say $v_1$, by symmetry) is not on the interior triangle of a bad $6$-cycle. Let $G'$ be the graph formed by identifying $v_2$ and $v_5$ in $G-v$ into vertex $X$. First of all, we do not create chords of $C$, for otherwise, the chord must be incident with $X$, conradicting $v_2$ and $v_5$ in $\int(C)$. Note that no new triangles can be created, else there would be a separating $5$-cycle in $G$, contradicting Lemma~\ref{NoSeparating}. Since $d_{\triangle}(G) \ge2$,  $v_2$ and its neighbors are all nontriangular, hence $d_{\triangle}(G') \ge2$. Also, $G'$ contains no $4$-cyces, else there would be a $4$-path between $v_2$ and $v_5$ which implies a separating good $6$-cycle in $G$, contradicting Lemma~\ref{NoSeparating}.  Therefore, $G' \in \mathcal{G}$. By the minimality of $G$, we know that there exists a superextension $\phi_{G'}$ of $\phi$ to $G'$. We show that $\phi_{G'}$ can be extended to a coloring $\phi_{G}$ of $G$.  Let $\phi_{G}(v_2)=\phi_{G}(v_5)=\phi_{G'}(X)$, and let $\phi_G(x)=\phi_{G'}(x)$ for $x \in V(G)-\{v, v_2, v_5\}$.  It remains to color $v$ to arrive at a contradiction. Recolor $v_1$ and $v_3$ with 1 by Lemma~\ref{2Colored3Face}. If $\phi_{G'}(X)=1$, then $v$ can be properly colored. If $\phi_{G'}(X)=2$ or $3$ (say 2), then $v_4$ must be colored $3$, else we can color $v$ properly. In this case, we can properly recolor $v_1$ and $v_3$, and then color $v$ with $1$.

{\bf Case 2:} Both $v_1$ and $v_3$ are on the interior $(3,3,3)$-faces of bad $6$-cycles. Let $N(v_i)=\{v,u_i,w_i\}$ for $i=1,2,3$ such that $w_1$ and $w_2$ are on $f_1$ and $u_2$ and $u_3$ are on $f_2$. Let $G'=G-\{v_3,u_3,w_3\}$. By the minimality of $G$, there exists a superextension $\phi_{G'}$ of $\phi$ to $G'$. We first claim that $v$ is $1$-saturated. For otherwise, color $v_3$ with $1$, and then either $\phi_{G'}(u_2)\not=1$ and we color $u_3$ with $1$ and then $w_3$ properly,
or $\phi_{G'}(u_2)=1$ and we color $w_3, u_3$ properly in order, a contradiction. Similarly, $u_2$ is also $1$-saturated. Furthermore, neither $v$ nor $u_2$ can be recolored, so their neighborhoods must have color set $\{1,1,2,3\}$.  Further, $v_1$ must be colored with $1$, or we could recolor it by Lemma~\ref{2Colored3Face}. We claim that $w_2$ must also be colored with $1$. For otherwise, we can recolor $w_1$ with 1 and then recolor $v_1$ properly, a contradiction. But since all neighbors of $v_2$ are colored with $1$,  $v_2$ can be recolored with a different color so that $v$ can be nicely recolored, a contradiction again.
\end{proof}

\begin{lemma}
\label{335Face}
Given a $(3,3,5^-)$-face $f$ in $int(C)$, the pendant neighbors of the 3-vertices on $f$ either are in $V(C)$ or have degree at least $5$.
\end{lemma}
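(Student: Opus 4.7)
The plan is to argue by contradiction. Write $f = xyz$ with $d(x) = d(y) = 3$ and $d(z) \le 5$, and let $x'$ be the pendant neighbor of $x$ (the argument for $y'$ is symmetric). Suppose $x' \notin V(C)$ and $d(x') \le 4$. I would first apply Lemma~\ref{3vert1} to $x$: since $d(y) = 3$ and, by assumption, $x'$ is neither a $5^+$-vertex nor on $C$, the only candidate in $N(x) = \{y, z, x'\}$ that can supply the required $5^+$- or $C$-neighbor is $z$; hence $d(z) = 5$.

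Next, I would delete $x$. By the minimality of $G$, $\phi$ superextends to a $(2,0,0)$-coloring $\phi'$ of $G - x$. If any color is absent from $\{\phi'(y), \phi'(z), \phi'(x')\}$, that color can be placed on $x$ to yield a superextension of $\phi$ to $G$ (the color-$1$ constraint is vacuous in this case, since $x$ would then have no color-$1$ neighbor), contradicting the choice of $G$. So I may assume $\{\phi'(y), \phi'(z), \phi'(x')\} = \{1, 2, 3\}$.

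The remainder is a case analysis that modifies $\phi'$ to free a color at $x$. The primary move is to recolor $y$ to color $1$: its two colored neighbors in $G - x$, namely $z$ and $y'$, are nonadjacent (else $\{y, z, y'\}$ would form a second triangle through $y$, contradicting $d_{\triangle}(G) \ge 2$), so Lemma~\ref{2Colored3Face} is applicable. The only genuine obstruction is the sub-case $\phi'(z) = 1$ with $z$ already $1$-saturated, since $d(z) = 5$ triggers exception~(2) of that lemma. I would handle this by first shifting $z$ off color $1$: $z$ has only three other colored neighbors in $G-x$, so at least one color in $\{2, 3\} \setminus \{\phi'(y)\}$ not appearing among them gives a valid new color for $z$, after which $y$ can safely be recolored to $1$. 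If both candidate colors are blocked at $z$'s remaining neighbors, I would instead recolor $x'$, using $d(x') \le 4$ so that $x'$ has at most three colored neighbors in $G - x$ (in the case $d(x') = 3$ this reduces to a direct application of Lemma~\ref{2Colored3Face}); choosing the new color of $x'$ to coincide with $\phi'(y)$ or $\phi'(z)$ frees the former color of $x'$ for $x$.

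The main obstacle is the bookkeeping needed to ensure that each recoloring preserves the $(2,0,0)$-coloring property—in particular, that the $5^+$-vertex $z$ does not become over-saturated by color-$1$ neighbors during the manipulation when $\phi'(z) = 1$ and $z$ is near capacity. Once all the residual sub-cases are verified, each yields a superextension of $\phi$ to $G$, contradicting the minimality of $G$ and completing the proof.
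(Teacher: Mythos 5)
Your overall strategy differs from the paper's in one crucial respect: you delete only the single $3$-vertex $x$, whereas the paper deletes \emph{both} $3$-vertices of the face $f$. That difference is exactly where your argument develops a genuine gap. With only $x$ removed, the vertex $z$ (which, as you correctly deduce via Lemma~\ref{3vert1}, has degree $5$) still has four colored neighbors, so its neighborhood in $G-x$ can carry the multiset of colors $\{1,1,2,3\}$ while $z$ itself is $1$-saturated with color $1$; such a $z$ can be neither recolored nor tolerated as a neighbor of a new color-$1$ vertex. In that situation your fallback moves can all be blocked simultaneously: recoloring $y$ with $1$ fails because of $z$, recoloring $y$ with the third color fails when $y'$ already carries it, and recoloring $x'$ fails when $x'$ is a $4$-vertex whose three remaining neighbors exhibit all three colors with the color-$1$ neighbor itself saturated (or on $C$). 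Nothing in the reducible-configuration lemmas rules out this local coloring of $G-x$, so the case analysis does not close. Two smaller problems compound this: your appeal to Lemma~\ref{2Colored3Face} for $y$ is not legitimate, since exception~(2) of that lemma is triggered by the mere fact that $d(z)=5$ (irrespective of the color of $z$), and the lemma also requires both colored neighbors of $y$ to lie in $int(C)$, whereas $y'$ may lie on $C$ (only the pendant neighbor you are arguing about is assumed to avoid $C$).

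The paper's proof sidesteps all of this by considering $G'=G\setminus\{x,y\}$. Then $z$ has degree at most $3$ in $G'$ and the offending pendant neighbor (of degree at most $4$ and not on $C$) likewise has at most $3$ colored neighbors, so both can be \emph{nicely recolored}; afterwards $x$ is properly colored (it has only two colored neighbors) and $y$ is colored last, where color $1$ is always admissible because every color-$1$ neighbor of $y$ is nicely colored. If you want to salvage your one-vertex-deletion approach, you would need an additional mechanism to handle the saturated $5$-vertex $z$; the cleanest such mechanism is precisely to remove $y$ as well, which collapses your proposal back to the paper's argument.
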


\begin{proof}
Consider a 3-face $f=xyz$ in $int(C)$, where $d(z)\le5$ and $d(y)=d(x)=3$ (see Figure~\ref{cat}). Assume to the contrary that the outer neighbor $y'$ of $y$ has degree at most $4$, but $y' \notin V(C)$. Consider $G'=G\backslash\{x,y\}$. Because $G' \in \mathcal{G}$, we know that there exists a superextension $\phi_{G'}$ of $\phi$ to $G'$.  Since $z$ and $y'$ have degree at most 3 in $G'$, they can be nicely recolored.  But now we extend $\phi_{G}$ to $G$ by properly coloring $x$ and then coloring $y$, a contradiction.
\end{proof}


\begin{figure}[H]
\centering
\includegraphics[scale=0.18]{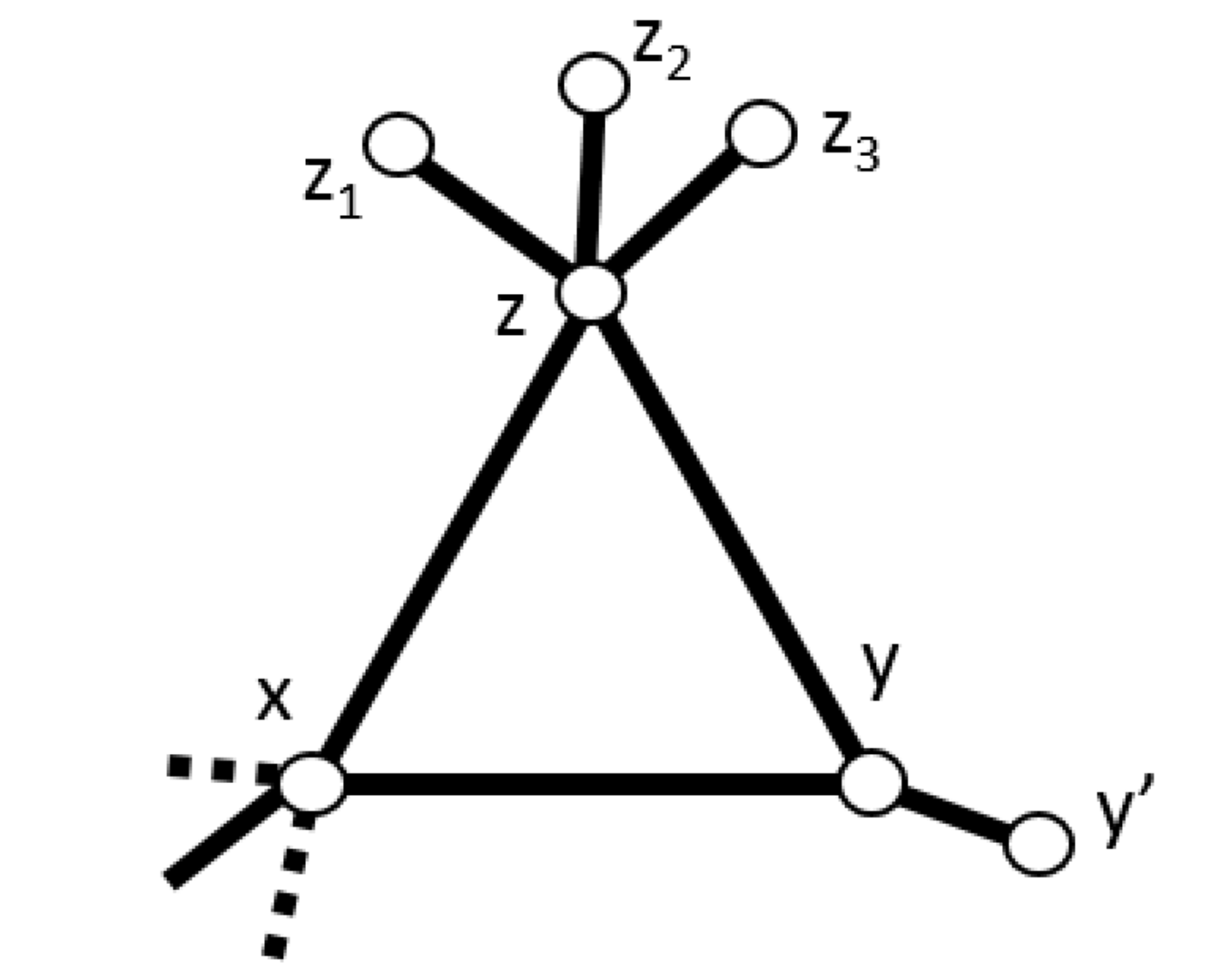} \hskip 1in
\includegraphics[scale=0.18]{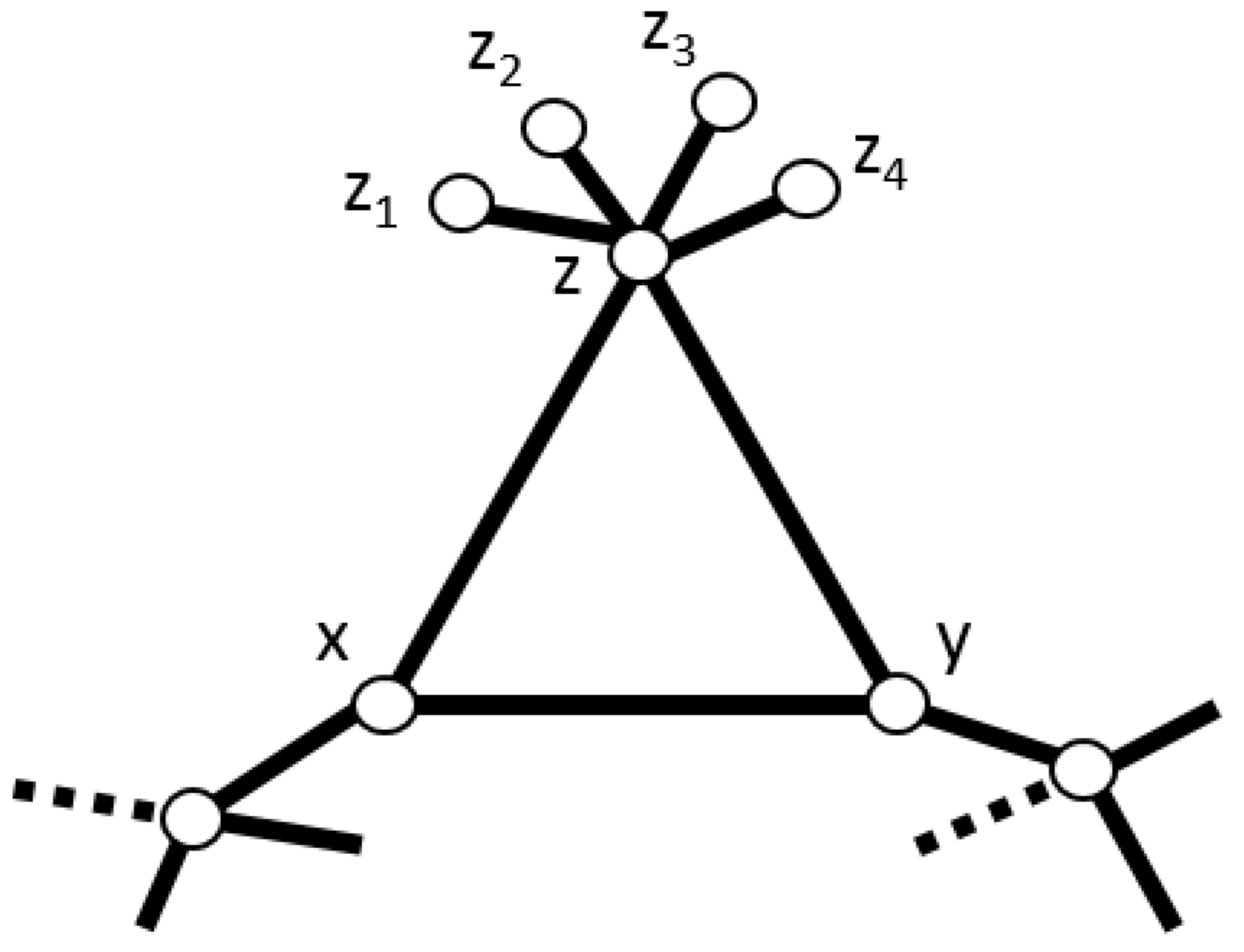}
\caption{$(3, 5^-, 5)$-face and $(3,3,6)$-face used in Lemmas~\ref{335Face}, ~\ref{335Face2} and ~\ref{336Face}}
\label{cat}
\end{figure}

\begin{lemma}\label{334Face}
Let $v\in int(C)$ be a $4$-vertex with $N(v)=\{v_i|1\le i\le4\}$ in the clockwise order. If $f=v_1vv_2$ is an internal $(3,3,4)$-face, then neither $v_3$ nor $v_4$ can be a $3$-vertex in $int(C)$.
\end{lemma}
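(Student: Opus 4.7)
The plan is to argue by contradiction. Without loss of generality, suppose $v_3$ is a $3$-vertex in $int(C)$; the case of $v_4$ is symmetric. Consider $G' = G \setminus \{v_1, v_2\}$. Since removing vertices cannot introduce $4$-cycles or bring two triangles closer together, $G' \in \mathcal{G}$, and by the minimality of $(G, C)$, $\phi$ superextends to a coloring $\phi_{G'}$ of $G'$. The remaining task is to color $v_1$ and $v_2$---possibly after a small amount of recoloring around $v$---so as to obtain a superextension of $\phi$ to $G$, contradicting the minimality of $(G, C)$.

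Three structural facts drive the coloring. First, by Lemma~\ref{335Face} applied to the $(3,3,4)$-face $f = v_1 v v_2$, each of the outer pendant neighbors $v_1'$ and $v_2'$ lies in $V(C)$ or has degree at least $5$. Second, since $d_{\triangle}(G) \ge 2$ and $v \in V(f)$, the vertex $v_3$ lies on no triangle; hence $v_3$ is not adjacent to $v_1$, $v_2$, or $v_4$, and its two neighbors distinct from $v$ are nonadjacent. Third, Lemma~\ref{3vert1} forces at least one of these two non-$v$ neighbors to be a $5^+$-vertex or to lie on $C$. The same three facts apply to $v_4$ if $v_4$ is itself a $3$-vertex in $int(C)$. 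Combined, these observations allow us to invoke Lemma~\ref{2Colored3Face} on $v_3$ (and on $v_4$ when applicable) after temporarily uncoloring $v$, giving us the option to recolor $v_3$ (and $v_4$) to color $1$ in the relevant configurations.

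The extension itself proceeds by a case analysis on the colors $c_{v_3}, c_{v_4}, c_{v_1'}, c_{v_2'}$ in $\phi_{G'}$. The default move is: whenever some strong color $s \in \{2,3\}$ does not appear at $v_3$ or $v_4$, set the color of $v$ to $s$ and attempt $(v_1, v_2) = (1, 1)$. This works unless one of $v_1', v_2'$ is a color-$1$ vertex that obstructs $v_1$ or $v_2$ from taking color $1$---either via the superextension constraint (when $v_i' \in V(C)$) or via color-$1$ saturation (when $v_i'$ is a $5^+$-vertex in $int(C)$). In the obstructed cases we switch to a mixed assignment, using the fact that each $c_{v_i'}$ forbids at most one strong color at $v_i$, so at least one of $(v_1, v_2) \in \{(1, t), (t, 1), (t, s), (s, t)\}$---with $t$ the other strong color in $\{2,3\} \setminus \{s\}$---remains legal. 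In the corner case $\{c_{v_3}, c_{v_4}\} = \{2, 3\}$, no strong color is available for $v$ and we take $v = 1$, after which the triangle is colored with a mix of color $1$ and strong colors chosen to avoid the forbidden colors at $v_1'$ and $v_2'$.

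The main obstacle is the combinatorial bookkeeping. The hardest sub-case is $c_{v_1'} = c_{v_2'} = 1$ with both $v_1', v_2' \in V(C)$: the superextension then simultaneously forbids $v_1 = 1$ and $v_2 = 1$, eliminating the ``easy'' coloring. We resolve this by recoloring $v_3$ (and $v_4$ when it is a $3$-vertex in $int(C)$) to color $1$ using Lemma~\ref{2Colored3Face}, which frees $v$ to take any strong color $s$; the triangle $v v_1 v_2$ then admits a valid strong-color assignment avoiding $c_{v_1'}$ at $v_1$ and $c_{v_2'}$ at $v_2$, completing the superextension and providing the desired contradiction.
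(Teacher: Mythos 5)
Your reduction is different from the paper's and, as written, it does not close. The paper does not delete $v_1,v_2$; it deletes $v$ and \emph{identifies} $v_2$ with $v_4$, so that in the superextension of the smaller graph these two neighbors of $v$ are forced to share a color. That identification is the whole point: it guarantees that after the two $3$-vertices $v_1$ and $v_3$ are recolored, some color is left for $v$. Your construction $G'=G\setminus\{v_1,v_2\}$ leaves $v,v_3,v_4$ colored independently, and the step you use to compensate is exactly where the argument breaks. You cite Lemma~\ref{3vert1} to note that one of the two non-$v$ neighbors of $v_3$ is a $5^+$-vertex or lies on $C$, and that these two neighbors are nonadjacent, and then claim this lets you invoke Lemma~\ref{2Colored3Face} to recolor $v_3$ with color $1$. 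But those facts trigger precisely exception (2) of Lemma~\ref{2Colored3Face} (nonadjacent colored neighbors, one of degree at least $5$), so the lemma gives you \emph{no} license to recolor $v_3$ with $1$; and if the forced neighbor is on $C$ with color $1$, recoloring $v_3$ to $1$ would destroy superextendability outright. The paper only ever applies Lemma~\ref{2Colored3Face} to special vertices or to $3$-vertices on $(3,3,5^-)$- or $(3,4,4)$-faces, where both exceptions are explicitly excluded; $v_3$ here is neither.

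The gap is not just technical: there is a configuration your scheme cannot color. Suppose $\phi_{G'}(v_4)=1$ with $v_4$ a $1$-saturated $5^+$-vertex whose neighborhood carries colors $\{1,1,2,3\}$ (so $v_4$ cannot be recolored and $v$ cannot take color $1$), and suppose $v_1'$ and $v_2'$ both block color $1$ at $v_1,v_2$ (e.g.\ both are $1$-saturated $5^+$-vertices colored $1$, which Lemma~\ref{335Face} permits). Since $v,v_1,v_2$ are pairwise adjacent and $v_1,v_2$ are confined to $\{2,3\}$, the only hope is $v=1$ --- which $v_4$ forbids. Recoloring $v_3$ (to $1$ or otherwise) does not help, and your fallback assignments $(t,s)$, $(s,t)$ are unavailable because $v$ itself occupies one of the two strong colors. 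This is why the identification of $v_2$ with $v_4$ (or some equivalent device tying $v_4$'s color to the triangle) is needed; a plain deletion of $v_1,v_2$ also fails to use the hypothesis that $v_3$ is a $3$-vertex in any essential way, and if it worked it would ``reduce'' all internal $(3,3,4)$-faces, which the discharging phase shows cannot be the case.
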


\begin{proof}
Without loss of generality, let $v_3$ be a $3$-vertex in $int(C)$. Let $G'$ be the graph formed by identifying $v_2$ and $v_4$ into $X$ in $G-v$. First of all, we do not create chords in $C$ of $G$, for otherwise, the chord must be at $X$, thus there is a $3$-path connecting two vertices on $C$, which will create a separating $5$ or good $6$-cycle, a contradiction to Lemma~\ref{NoSeparating}. Note that no new triangles can be created, else there would be a separating $5$-cycle in $G$, contradicting Lemma~\ref{NoSeparating}. Since $d_{\triangle}(G) \ge2$, both $v_2$ and $v_4$ are nontriangular in $G-v$, hence $d_{\triangle}(G') \ge2$. None of $v_2,v,v_4$ is on a bad $6$-cycle, thus $G'$ contains no $4$-cycles, else there would be a separating good $6$-cycle in $G$, contradicting Lemma~\ref{NoSeparating}. Therefore, $G' \in \mathcal{G}$. By the minimality of $G$, we know that there exists a superextension $\phi_{G'}$ of $\phi$ to $G'$. We show that $\phi_{G'}$ can be extended to a coloring $\phi_{G}$ of $G$.  Let $\phi_{G}(v_2)=\phi_{G}(v_4)=\phi_{G'}(X)$, and let $\phi_G(x)=\phi_{G'}(x)$ for $x \in V(G)-\{v, v_2, v_4\}$. We claim this coloring extends to $v$, a contradiction.

If $\phi_{G'}(X)=2$ (or $3$), then we properly recolor $v_1$ and $v_3$ and color $v$ with 1. If $\phi_{G'}(X)=1$, then $N(v)$ has color set $\{1,1,2,3\}$, else we can color $v$ with the missing color. By symmetry, let $v_1$ be colored with $2$ and $v_3$ be colored with $3$.  We can recolor $v_1$ with either $3$ if the outer neighbor of $v_1$ is colored with $1$, or $1$ otherwise. In either case, $v$ can be colored with 2, a contradiction.
\end{proof}

\begin{lemma}
\label{335Face2}
Suppose that $f=xyz$ is a $(3,5^-,5)$-face in $int(C)$, with $d(x)\le5$, $d(y)=3$, and $d(z)=5$.  Let the outer neighbors of $z$ be $z_1, z_2, z_3$ in clockwise order so that $x$ and $z_1$ are on the same face.  Let $y'$ be the outer neighbor of $y$(See Figure~\ref{cat}).
\begin{enumerate}
\item At most one of $\{z_1,z_2,y\}$ (and symmetrically $\{z_1, z_3,y\}$) is potentially special (and hence at most one is special).
\item If  $z_2$ and $z_3$ are potentially special, then either $y'\in V(C)$, or  $d(z_1)\ge 5$.
\end{enumerate}
\end{lemma}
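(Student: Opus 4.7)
The plan is to prove both parts by contradiction, in the style of the preceding reducibility lemmas. Suppose the stated failure occurs: for part~(1), two of $\{z_1,z_2,y\}$ are potentially special; for part~(2), $z_2$ and $z_3$ are potentially special while $y'\notin V(C)$ and $d(z_1)\le 4$. In each case I would delete the $5$-vertex $z$ (and, in some subcases, also $y$) from $G$ and observe that the resulting graph $G'$ still lies in $\mathcal{G}$, since vertex deletion creates no new triangles or $4$-cycles and $d_\triangle$ cannot decrease. By the minimality of $G$, $\phi$ superextends to a $(2,0,0)$-coloring $\phi_{G'}$ of $G'$.

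Each potentially special vertex $v\in\{z_1,z_2,z_3,y\}$ has $z$ as its unique $5^+$-neighbor, because the other two neighbors of $v$ are $4^-$-vertices in $int(C)$ by the definition of PS --- and for $y$ in particular this forces $d(x)\le 4$ and $y'\in int(C)$ with $d(y')\le 4$, since $d(z)=5$. After $z$ is deleted, any such PS vertex has exactly two colored neighbors, both $4^-$-vertices in $int(C)$, so neither hypothesis of Lemma~\ref{2Colored3Face} holds and I may recolor the vertex with color $1$ while preserving the superextension property.

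What remains is to extend back to $G$ by coloring $z$ (and $y$, when $y$ was deleted). The only possible obstruction is that the remaining colored neighbors of $z$ jointly exhibit all three colors with color~$1$ already present, in which case $z$ has at least three color-$1$ neighbors and both of the other two colors are blocked. I would handle this by splitting on which vertices are PS. For part~(1): if $y$ is among the PS pair then the extra constraints $d(x)\le 4$ and $d(y')\le 4$ give enough room for one further nice recoloring (using the lemma following Lemma~\ref{2Colored3Face}) to clear the obstruction; if only $z_1,z_2$ are PS, I instead delete $\{y,z\}$ from the outset and nicely recolor $y$ against its two colored neighbors $x$ and $y'$ to free a color for $z$. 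For part~(2) I work with $G-\{y,z\}$: the hypothesis $y'\notin V(C)$ is used so that a nice recoloring of $y$ does not violate the superextension condition on $C$, and $d(z_1)\le 4$ is used symmetrically so that Lemma~\ref{2Colored3Face} applies to $z_1$ when the obstruction sits on the $z_1$ side.

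The main obstacle will be the bookkeeping in this final case analysis: for every color configuration of $(c_x,c_{y'},c_{z_3})$ in part~(1) (respectively $(c_x,c_{y'},c_{z_1})$ in part~(2)) that is consistent with the PS assumptions, one must verify that either $z$ is immediately colorable or a single further nice recoloring succeeds. The analysis is finite but needs to be done carefully; beyond that, the only other technicality is the routine verification that the chosen deletion preserves $G'\in\mathcal{G}$, which holds because $4$-cycles and triangles cannot be introduced by removing vertices.
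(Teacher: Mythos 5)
The central device in the paper's proof is missing from your plan: for this lemma one does not merely delete $z$, one \emph{identifies $x$ and $z_3$ into a single vertex $X$} (and deletes $z$), so that in the superextension of the smaller graph $x$ and $z_3$ are forced to carry the same color $\alpha$. This is not a cosmetic convenience. With deletion alone, $z$ retains five independently colored neighbors, and the configuration $\phi(x)=2$, $\phi(z_3)=3$ is lethal: in part (1) with $z_1,z_2$ potentially special, if additionally $\phi(y')=3$ and each of $z_1,z_2$ has its two $4^-$-neighbors colored $\{2,3\}$, then every admissible (re)coloring forces $y=z_1=z_2=1$, so $z$ sees $\{1,1,1,2,3\}$ and no color is available. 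You cannot escape by recoloring $x$ or $z_3$ --- $x$ may be a $5$-vertex and $z_3$ is an arbitrary vertex with no degree bound in this subcase --- and Lemma~\ref{2Colored3Face} gives you no leverage on them. The identification eliminates exactly this case ($x$ and $z_3$ can never exhibit both of the colors $2$ and $3$), at the price of having to verify that the identification preserves membership in $\mathcal{G}$ (no new triangles, no new $4$-cycles, $d_\triangle\ge 2$), which is where Lemma~\ref{NoSeparating} and the bad-$6$-cycle analysis are actually used. Your remark that ``vertex deletion creates no new triangles or $4$-cycles'' is true but beside the point, because deletion is too weak an operation here.

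A secondary gap: your description of the obstruction (``all three colors appear with color $1$ already present, so $z$ has at least three color-$1$ neighbors'') is incomplete. Color $1$ is also blocked at $z$ when $z$ has only one or two neighbors colored $1$ but one of them is already $1$-saturated; and a vertex ``recolored with $1$'' via Lemma~\ref{2Colored3Face} is guaranteed only to yield a valid coloring, not to be nicely colored afterwards. The paper's proof of part (2) spends most of its effort on precisely this distinction (``if $z_2$, $z_3$, or $x$ are not nicely colored with $1$, then properly recolor them\dots''), so the bookkeeping you defer is not routine under your setup and, in the stuck case above, cannot be completed at all.
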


\begin{proof}
Consider the graph $G'$ formed by identifying vertices $x$ and $z_3$ into vertex $X$, and deleting the vertex $z$.  Note all 3-cycles in $G'$ were 3-cycles in $G$, else there would be a separating 5-cycle in $G$, contradicting Lemma~\ref{NoSeparating}. Also, since $z$ was incident to a 3-face, $z_3$ cannot be triangular, and hence $d_{\triangle}\ge2$ is maintained in $G'$. We also claim that $G'$ does not contain any 4-cycles.  Any such 4-cycle would correspond to a path of length 4 in $G$ between $x$ and $z_3$, and such a path would imply a separating 6-cycle in $G$; such a 6-cycle must be bad.  But since $x$ is triangular, the 6-cycle could not have another interior triangle, a contradiction. Hence $G'\in\mathcal{G}$, and by the minimality of $G$, $\phi$ superextends to a $(2,0,0)$ coloring $\phi_{G'}$ of $G'$. We show that $\phi_{G'}$ can be extended to a coloring $\phi_{G}$ of $G$ when the hypotheses fail.  Let $\phi_{G}(x)=\phi_{G}(z_3)=\phi_{G'}(X)=\alpha$, and let $\phi_G(v)=\phi_{G'}(v)$ for all other $v \in V(G)-z$.  It remains to color $z$ to arrive at a contradiction.

(1)  Assume first that $z_1$ and $z_2$ are both potentially special. Properly recolor $y$, and properly recolor $z_1$ and $z_2$.  If $\alpha=2$ (or symmetrically 3), then $z$ can be colored with 1, unless $z_1$, $z_2$, and $y$ are all colored with 1, in which case $z$ can be colored with $3$; even if $z_3 \in V(C)$, this would be a superextension of $\phi$ to $G$, a contradiction.  If $\alpha=1$, then $z_1$ and $z_2$ can be recolored with 1 by Lemma~\ref{2Colored3Face}, and $z$ can be properly colored.  Hence at most one of $\{z_1,z_2\}$ is potentially special. Repeating the proof with $z_3$ in place of $z_2$ shows that at most one of $\{z_1, z_3\}$ is potentially special.

Now suppose one of $\{z_1, z_2\}$ is potentially special (assume $z_1$ by symmetry), and $y$ is, as well.  Properly recolor $y$ and $z_1$.  If $\alpha=2$ and $z_2$ is colored with $2$ or $3$, then $z$ can be colored with 1.  Otherwise, $y$ and $z_1$ can be recolored with 1, and either 2 or 3 is available for $z$.

(2) Now assume that $z_2$ and $z_3$ are potentially special, $y' \notin V(C)$, and $d(z_1) \leq 4$.

If $\alpha=1$, then recolor $z_2$ with 1.  Now $z$ can be colored with $2$ or $3$ unless  $\phi_{G'}(z_1)\neq \phi_{G'}(y)$ and neither is color 1.  If $z_2$, $z_3$, or $x$ are not nicely colored with 1, then properly recolor them, and color 1 is now available for $z$.  If they are all nicely colored with 1, then $y$ can be recolored 1, unless $\phi_{G'}(y')=1$, in which case $y$ can be recolored with $\phi_{G'}(z_1)$.  In either case, $\phi_{G'}(y)$ becomes available for $z$, a contradiction.

If $\alpha=2$ (or symmetrically 3), consider the color on $z_1$.  If $\phi_{G'}(z_1)\neq 1$, then properly recolor $z_2$ and $y$, and now color 1 is available for $z$.  If $\phi_{G'}(z_1)=1$, then recolor $z_2$ with 1 by Lemma~\ref{2Colored3Face}. So $z$ can be colored with 3, unless $y$ is given color 3. In this case, consider $z_1$.  Since $d(z_1) \leq 4$, the vertex $z_1$ can be nicely recolored, and we can color $z$ with 1, a contradiction.
\end{proof}

\begin{lemma}\label{336Face}
Let $f$ be a $(3,3,6)$-face in $int(C)$ with vertices $x,y,z$ such that $d(z)=6$.  Then either a neighbor of $z$ is in $V(C)$, or $z$ has at most two potentially special neighbors.
\end{lemma}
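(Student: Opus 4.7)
The plan is to argue by contradiction and extend a superextension of $G' = G - z$ back to $G$. Suppose no neighbor of $z$ lies on $V(C)$ and $z$ has at least three potentially special neighbors $s_1, s_2, s_3$. Since deleting the single vertex $z$ cannot create a $4$-cycle or decrease $d_{\triangle}$, we have $G' \in \mathcal{G}$, and by the minimality of $(G, C)$ the precoloring $\phi$ superextends to a $(2,0,0)$-coloring $\phi_{G'}$ of $G'$. Because no neighbor of $z$ is on $V(C)$, the superextension condition at $z$ is vacuous, so it suffices to choose a color for $z$ that respects the defect $(2,0,0)$ constraint.

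Each $s_i$ has exactly two colored neighbors in $G'$, both $4^-$-vertices in $int(C)$, so Lemma~\ref{2Colored3Face} applies (neither exception holds: adjacent pairs have degree sum at most $8 < 9$; nonadjacent pairs contain no $5^+$-vertex), and each $s_i$ can be independently recolored to color $1$. Note that the $s_i$'s are pairwise nonadjacent, since $zs_is_j$ would form a triangle sharing the vertex $z$ with $xyz$, contradicting $d_{\triangle}(G) \ge 2$. Having recolored all three $s_i$ to color $1$, if either color $2$ or color $3$ is absent from $N(z)$, coloring $z$ with the missing color completes the extension and yields a contradiction. Thus we may assume both colors $2$ and $3$ appear among the three non-potentially-special neighbors $u_1, u_2, u_3$ of $z$.

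In this stuck case, the strategy is instead to color $z$ with color $1$. I would reassign each $s_i$ a color from $\{2, 3\}$ whenever possible---that is, unless the two non-$z$ neighbors of $s_i$ together carry both colors $2$ and $3$, which forces $s_i$ to color $1$---so as to bring the number of color-$1$ neighbors of $z$ down to at most two. I would also verify that no color-$1$ neighbor of $z$ becomes $1$-saturated after $z$ is added: the forced $s_i$'s are fine since their non-$z$ neighbors are colored $\{2,3\}$, and any $u_j$ of color $1$ that is already $1$-saturated in $\phi_{G'}$ can be nicely recolored, using that $u_j$ has few colored neighbors in $G'$.

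The main obstacle is the corner case where so many of the $s_i$'s are forced to color $1$ that $n_1 \le 2$ cannot be achieved at $z$, combined with a $u_j$ that is a rigid color-$1$ vertex resisting all nice recoloring. To rule out this configuration I would invoke the structural lemmas already established: Lemma~\ref{3vert1} (each $s_i$ being a $3$-vertex in $int(C)$ adjacent to the $5^+$-vertex $z$), Lemma~\ref{Max1} (bounding the number of special neighbors of $z$ by $d(z)-2=4$), and the local restrictions of Lemmas~\ref{335Face}, \ref{335Face2}, and \ref{334Face}, combined with $d_{\triangle}(G) \ge 2$. These together forbid the dense packing of forced $s_i$'s and rigid $u_j$'s needed for the corner case, completing the contradiction.
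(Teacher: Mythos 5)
There is a genuine gap, and it sits exactly where you flag ``the main obstacle.'' After deleting only $z$, all six neighbors of $z$ are colored, and the stuck configuration is perfectly possible: each potentially special $s_i$ may have its two non-$z$ neighbors colored $2$ and $3$, forcing $s_i$ to color $1$ after any recoloring, while the remaining neighbors $u_1,u_2,u_3$ carry both colors $2$ and $3$. Then $z$ cannot take color $2$ or $3$, and it cannot take color $1$ either, since it would have three neighbors of color $1$. None of the lemmas you invoke rules this out: Lemma~\ref{3vert1} only gives that each $s_i$ has a $5^+$-neighbor (which is $z$ itself), Lemma~\ref{Max1} permits $z$ to have up to $d(z)-2=4$ special neighbors, and Lemmas~\ref{335Face}, \ref{335Face2}, \ref{334Face} constrain other configurations, not the colors forced on the second neighborhood of $z$. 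So the final paragraph of your proposal is an unsupported assertion, not a proof; the reduction $G'=G-z$ simply does not leave enough freedom in $N(z)$.

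The paper's proof avoids this by a stronger reduction: it deletes $z$ together with one of $x,y$ and \emph{identifies} the three alternating neighbors ($x,z_2,z_4$ into $X_1$, or $y,z_1,z_3$ into $X_2$, choosing the triple containing fewer potentially special vertices) into a single vertex. This forces three of $z$'s six neighbors to receive the same color $\alpha$ and leaves one neighbor ($y$ or $x$) uncolored, which collapses the color multiset on $N(z)$ enough that a short case analysis on $\alpha$ (using Lemma~\ref{2Colored3Face} to recolor the potentially special vertices in the other triple) always frees a color for $z$. Of course, this identification requires verifying that the resulting graph stays in $\mathcal{G}$ (no new triangles, no new $4$-cycles, $d_{\triangle}\ge 2$ preserved), which is where the separating-cycle lemmas and the badness analysis of $6$-cycles enter. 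Some such merging (or an equivalent device that ties together the colors of several neighbors of $z$) is the missing idea; without it the argument cannot be completed.
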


\begin{proof}
Suppose that no neighbors of $z$ are in $V(C)$.  Let $z_1, z_2, z_3$, and $z_4$ be the outer neighbors of $z$, labeled as in Figure~\ref{cat}. Let $H_1$ be the graph formed by identifying $x,z_2,$ and $z_4$ in $G-\{z,y\}$ into a single vertex $X_1$, and  $H_2$ be graph formed by identifying $y,z_1,$ and $z_3$ in $G-\{z,x\}$ into a single vertex $X_2$.  Let $S_1=\{x, z_2, z_4\}$ and let $S_2=\{y, z_1, z_3\}$.  Assume by symmetry that the number of potentially special vertices in $S_1$ is at most the number of potentially special vertices in $S_2$.  This implies that we will consider $H_1$ for this proof, but a similar argument would hold for $H_2$ if $S_2$ had more potentially special vertices.

Note that all 3-cycles in $H_1$ were 3-cycles in $G$, else there would be a separating 5-cycle in $G$, contradicting Lemma~\ref{NoSeparating}. Also, since $z$ was incident to a 3-face, $d_{\triangle}\ge2$ is maintained in $H_1$. We also claim that $H_1$ does not contain any 4-cycles.   Any 4-cycle in $H_1$ would correspond to the contraction of the edges between two vertices in $\{x,z_2,z_4\}$, and that would imply a separating 6-cycle in $G$; such a 6-cycle must be good, since the outer neighbors of $z$ cannot be triangular, but no such separating cycle exists.  Thus $H_1\in\mathcal{G}$, and by the minimality of $G$, we know that $\phi$ superextends to a $(2,0,0)$-coloring $\phi_{H_1}$ of $H_1$.

We claim that $\phi_{H_1}$ extends to a $(2,0,0)$-coloring $\phi_G$ of $G$ that superextends $\phi$, a contradiction.  Let $\phi_G(v)=\phi_{H_1}(v)$ for $v\in H_1\backslash\{X_1\}$, and $\phi_G(x)=\phi_G(z_2)=\phi_G(z_4)=\phi_{H_1}(X_1)=\alpha$.  It remains to assign colors to $y$ and $z$.  Let $y'$ be the outer neighbor of $y$.  If $S_2$ contains at most one potentially special vertex, then by the minimality of $S_1$, the result holds.  Hence we may assume $S_2$ contains at least two potentially special vertices, and by symmetry, we may assume $z_1$ is potentially special.

Suppose first that $\alpha=1$.  Recolor $z_1$ with 1 by Lemma~\ref{2Colored3Face}.  If $\phi_{H_1}(z_3) \neq \phi_{H_1}(y')$, then $y$ can be colored with $\phi_{H_1}(z_3)$, leaving a color available for $z$.  If $\phi_{H_1}(z_3) = \phi_{H_1}(y')$, then $y$ can be colored with 1 and a color is left for $z$, unless $\phi_{H_1}(z_3) = \phi_{H_1}(y') = 1$ and $y'$ is improperly colored (and cannot be nicely recolored).  This implies that $y$ is not potentially special, and hence $z_3$ is potentially special.  Thus $z_3$ can be recolored with 1, and $y$ and $z$ can be colored with 2 and 3.

Otherwise, by symmetry we may assume that $\alpha=2$.  Properly color $y$, and then color 1 is available for $z$ unless either all of $S_2$ receives color 1, or some vertex in $S_2$ is not nicely colored with 1 and cannot be nicely recolored.  In the former case, color 3 is available for $z$.  In the latter case, some neighbor of $z$ in $S_2$ is not potentially special.  But then the other two vertices in $S_2$ must be potentially special, and they can be recolored with 1.  This leaves color 3 available for $z$.
\end{proof}

\section{Discharging Procedure}
\label{Discharging}

We are now ready to present a discharging procedure that will complete the proof of the theorem.  Let each vertex $v\in V(G)$ have an initial charge of $\mu(v)=2d(v)-6$, and each face $f\not=C$ in our fixed plane drawing of $G$ have an initial charge of $\mu(f)=d(f)-6$.  Recall that the length of $C$ is $r$; let $\mu(C)=r+6$. By Euler's Formula,   $\sum_{x\in V\cup F}\mu(x)=0$.

Let $\mu^*(x)$ be the charge of $x\in V\cup F$ after the discharge procedure. To lead to a contradiction, we shall prove that $\mu^*(x)\ge 0$ for all $x\in V\cup F$ and $\mu^*(C)>0$.

Let a $t$-face with exactly one vertex in $C$ be an $F_t'$-face, and a $t$-face with two or more vertices in $C$ be an $F_t''$-face for $t\in\{3,5\}$. Note that by Lemma~\ref{ChordsAndNeighbors}, no 3-face contains three vertices of $C$ and no 5-face contains four consecutive vertices of $C$.   Observe also that since $d_{\Delta} \geq 2$, a vertex can be incident to at most one 3-face.

We call a $5$-vertex $v$ {\em good} if it contains three consecutive neighbors that are neither special vertices on $5$-faces nor on internal pendant $(3,3,5^-)$- or $(3,4,4)$-faces of $v$, furthermore, they are the nontriangular neighbors when $v$ is on a $3$-face. Otherwise, it is {\em bad}. Extending this, a $4^+$-vertex in $int(C)$ is good if it is a nontriangular $4$-vertex, a good $5$-vertex, or a $6^+$-vertex. We call a $5$-face in $int(C)$ {\em rich} if it has one good $4^+$-vertex and two or more other $5^+$-vertices.

Below are the discharging rules:
\begin{enumerate}[(R1)]
\item \label{itm:4v} If $v$ is a 4-vertex and $f$ is an incident face in $int(C)$, then $v$:
\begin{enumerate}
\item \label{itm:4v_3f} gives $2$ to $f$ when $f$ is a $(3,3,4)$-face, and $\frac{5}{4}$ to $f$ when $f$ is any other triangular face.
\item \label{itm:4v_5f} gives $\frac{1}{2}$ to $f$ when $f$ is a 5-face and $v$ is nontriangular, and gives $\frac{1}{4}$ to $f$ when $f$ is a 5-face and $v$ is a triangular vertex with no incident $(3,3,4)$-face.
\end{enumerate}

\item \label{itm:5+v} If $v\in int(C)$ is a $d$-vertex with $d\ge 5$, then $v$:
\begin{enumerate}
\item \label{itm:5+v_inci5} gives $\frac{3}{8}$ to each incident $5$-face in $int(C)$ with exactly two $5^+$-vertices that are consecutive, gives $\frac{1}{3}$ to each incident $5$-face in $int(C)$ that is not rich and has at least three $5$-vertices, and gives $\frac{1}{2}$ to each other incident $5$-face, unless $v$ is a bad $5$-vertex and the $5$-face is rich, in which case $v$ gives $\frac{1}{4}$.  In addition, $v$ gives $\frac{1}{4}$ to each of its pendant special 5-faces in $int(C)$.

\item \label{itm:5+v_pend3f} gives $1, \frac{5}{8}, \frac{1}{2}$ to pendant $(3,3,3)$-, $(3,3,5)$-faces, and $(3,4^-,4)$-faces in $int(C)$, respectively.

\item \label{itm:5v} gives $\frac{7}{4}, \frac{3}{2}, 1$ to incident $(3,4^-,5)$-, $(3,5,5)$- and {other incident $3$-faces} in $int(C)$, respectively (when $d=5$).

\item \label{itm:6+v} gives $3, 2, 1$ to incident $(3,3,d)$-, $(3,4^+,d)$-, and $(4^+,4^+,d)$-faces in $int(C)$, respectively (when $d>5$).
\end{enumerate}

\item \label{itm:Cv} The initial charge of $r+6$ on $C$ is distributed as follows:
\begin{enumerate}
\item $C$ gets $2d(v)-6$ from each vertex $v\in C$, $1$ from each $7^+$-face.
\item $C$ gives $3$ to each $3$-face in $F_3'\cup F_3''$, $1$ to each $5$-face in $F_5'\cup F_5''$,  $1$ to pendant $(3,3,5^-)$- and $(3,4,4)$-faces in $int(C)$, and $\frac{1}{4}$ to its pendant special 5-faces.
\end{enumerate}
\end{enumerate}

\begin{lemma}
The face $C$ has a positive final charge.
\end{lemma}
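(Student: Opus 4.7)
My plan is to compute $\mu^*(C)$ directly from rule (R3) and show it is positive by casework on $r\in\{3,5,6\}$. I start by recording two counting identities: $\sum_{v\in V(C)}(2d(v)-6) = 2E_C - 2r$, where $E_C$ is the number of edges between $V(C)$ and $V(int(C))$ (using that $C$ is chordless by Lemma~\ref{ChordsAndNeighbors}, so each $v\in V(C)$ contributes exactly $2$ to $\sum d(v)$ from edges on $C$); and $\sum_{v\in V(C)}(d(v)-1) = r + E_C$, the total number of interior face-corners at $V(C)$, which upper-bounds the number of interior faces incident to $V(C)$. Together with $\mu(C)=r+6$, the charge into $C$ from vertices alone is $6+2E_C - r$.

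For $r=3$, the fact that $C$ itself is a triangle combined with $d_\triangle(G)\ge 2$ eliminates every outgoing charge except to 5-faces and pendant special 5-faces: no other 3-face of $G$ can share a vertex with $C$ (that would give two triangles at distance $0$), and no $(3,3,5^-)$- or $(3,4,4)$-face can be pendant to a vertex of $V(C)$ (that would give two triangles at distance $1$). So $C$'s losses amount to $1$ per 5-face incident to $V(C)$ (at most $3+E_C$, by the corner count) and $\tfrac{1}{4}$ per pendant special 5-face (at most $E_C$, one per edge from $V(C)$ to $int(C)$). Plugging in gives $\mu^*(C) \ge 9 + (2E_C-6) - (3+E_C) - E_C/4 = 3E_C/4$, which is strictly positive since $E_C\ge 1$: otherwise $G$ would be $C$ itself, contradicting that $(G,C)$ is a counterexample.

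For $r=5$ and $r=6$, I analyze the charge associated to each $v\in V(C)$ by distributing each face charge $c(f)$ evenly among the $k_f = |V(f)\cap V(C)|$ face-corners of $f$ at $V(C)$. Two structural facts drive the estimate: by $d_\triangle(G)\ge 2$, each $v\in V(C)$ lies on at most one 3-face; and if $v$ lies on a 3-face then no interior neighbor of $v$ can be on another 3-face, ruling out pendant $(3,3,5^-)$- and $(3,4,4)$-faces at $v$. Using also that each of the two edge-faces at $v$ has $k_f\ge 2$ (hence contributes at most $1/2$ per corner), I show the total outgoing charge $w(v)$ associated to $v$ is at most $2d(v)-4$, so the per-vertex net is at least $-2$. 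Summing with $\mu(C) = r+6$ yields $\mu^*(C)\ge 6-r$, which is already positive for $r=5$.

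The main obstacle is $r=6$, where this bound yields only $\mu^*(C)\ge 0$. To obtain strict positivity I would examine the extremal configuration in which every $v\in V(C)$ simultaneously achieves $w(v) = 2d(v) - 4$: this forces every edge-face of $C$ to be a 5-face with exactly two $C$-vertices, every middle face at $V(C)$ to be an $F_5'$ 5-face, and every interior neighbor of $V(C)$ to be a 3-vertex pendant on a $(3,3,5^-)$- or $(3,4,4)$-face. Tracing these conditions around $C$, adjacent pendant 3-faces are forced to share a vertex, placing two triangles at distance $0$ and contradicting $d_\triangle(G)\ge 2$. Thus strict inequality must hold at some vertex, and $\mu^*(C) > 0$.
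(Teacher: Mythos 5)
Your overall strategy (compute $\mu^*(C)$ from the identity $\sum_{v\in C}(2d(v)-6)=2e(C,V(G)-C)-2r$ and then bound the outflow) matches the paper's starting point, but your accounting is genuinely different: you split each face's charge over its corners at $V(C)$ and bound a per-vertex loss $w(v)$, whereas the paper assigns a budget of $2$ to each edge of $E(C,V(G)-C)$ and checks that every 3-face, 5-face and pendant face is covered by its incident edges. Your $r=3$ case is fine. The two later cases, however, each contain a real gap.

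First, the asserted bound $w(v)\le 2d(v)-4$ is not proved and is in fact false. Take $v\in V(C)$ with $d(v)=4$ incident to an $F_3'$-face $vxy$ with $x,y$ interior $3$-vertices. None of the reducible configurations excludes this: Lemma~\ref{3vert1} is satisfied because $x,y$ are adjacent to $v\in V(C)$, and Lemma~\ref{335Face} does not apply because the triangle is not in $int(C)$. If the face on the far side of the edge $xy$ is a $5$-face in $int(C)$ whose other vertices are $4^-$-vertices, then $x$ and $y$ are special and $C$ pays $\tfrac14$ per special neighbour of $v$ by (R3b); together with $3$ for the $F_3'$-face and $\tfrac12$ for each of the two edge-faces, $w(v)\ge 3+\tfrac12+\tfrac12+\tfrac14>4=2d(v)-4$. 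The paper's edge-based bookkeeping absorbs exactly this case: each of the two interior edges of the $F_3'$-face carries $\tfrac32$ to the triangle, $\tfrac14$ to the pendant special $5$-face, and only $\tfrac14$ to its incident $5$-face, with the remaining $\tfrac34$ for that $5$-face supplied by the edge leaving the adjacent $C$-vertex. In your scheme this deficit lands entirely on $v$. With this leak, even your $r=5$ conclusion degrades to $\mu^*(C)\ge 0$ rather than $>0$ (two such vertices can coexist on a $5$-cycle without violating $d_\triangle\ge2$).

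Second, and more fundamentally, your $r=6$ extremal analysis reaches a false contradiction. In the configuration where every $v\in V(C)$ attains equality, the pendant $3$-faces at consecutive $3$-vertices of $C$ are indeed forced to share a vertex --- but they can simply be \emph{the same} interior triangle, in which case there is only one triangle and $d_\triangle(G)\ge 2$ is not violated. This is precisely the bad $6$-cycle: $C=u_1\cdots u_6$ with $u_1,u_3,u_5$ matched to an interior $(3,3,3)$-face and $u_2,u_4,u_6$ of degree $2$. One checks directly that this yields $\mu^*(C)=6-6+2\cdot3-3-3=0$, so equality is genuinely attainable and cannot be excluded by $d_\triangle\ge 2$. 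The only way out is the hypothesis that $C$ is a \emph{good} $6$-cycle, which your argument never invokes; this hypothesis is essential, since the paper observes that a precoloring of a bad $6$-cycle need not superextend at all. The paper's proof of this lemma spends most of its effort on exactly this point: in the tight case it first eliminates $F_3''$- and $F_3'$-faces and consecutive $3^+$-vertices on $C$, and then shows the surviving structure forces $C$ to be a bad $6$-cycle, contradicting the standing assumption that $C$ is good.
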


\begin{proof}
Let $t_3, t_5$ be the number of pendant $3$-faces and pendant special $5$-faces at $C$, respectively.  Assume that $C$ gets $a$ from $7^+$-faces.  Let $E(C, V(G)-C)$ be the set of edges between $C$ and $V(G)-C$ and let $e(C, V(G)-C)$ be its size.  Then by (R3),
\begin{align}
\mu^*(C)&=r+6+\sum_{v\in C} (2d(v)-6)-3(|F_3'\cup F_3''|)-|F_5'\cup F_5''|-t_3-\frac{t_5}{4}+a \notag\\
&=r+6+2\sum_{v\in C} (d(v)-2)-2r-3(|F_3'\cup F_3''|)-|F_5'\cup F_5''|-t_3-\frac{t_5}{4}+a \notag\\
\label{calc}&=6-r+2e(C,V(G)-C)-3(|F_3'\cup F_3''|)-|F_5'\cup F_5''|-t_3-\frac{t_5}{4}+a.
\end{align}

We aim to balance the charge of 2 on each $e\in E(C,V(G)-C)$ with the charge distributed to the incident and pendant faces; we can view this as sharing a charge of 2 for each $e\in E(C,V(G)-C)$ with the faces.
\begin{enumerate}[(a)]
\item If $e$ is on a $3$-face $f\in F_3'\cup F_3''$, then $e$ can give $\frac{3}{2}$ to $f$, $\frac{1}{4}$ to a potential pendant $5$-face, and $\frac{1}{4}$ to a potential incident $5$-face.

\item If $e$ is adjacent to a pendant $3$-face, then it can give $1$ to the $3$-face and $\frac{1}{2}$ to each potential incident $5$-face.

\item If $e$ is neither on a $3$-face $f\in F_3'\cup F_3''$ nor adjacent to any $3$-face, then it can give $\frac{3}{4}$ to each potential incident $5$-face and $\frac{1}{4}$ to a potential pendant $5$-face.  In this case, $e$ would have a surplus of at least $\frac{1}{4}$.
\end{enumerate}

Observe first that pendant faces are collectively allocated $t_3 + \frac{t_5}{4}$ from $E(C, V(G)-C)$.  Since each face in $F_3'\cup F_3''$ contains two edges in $E(C,V(G)-C)$, it is allocated a charge of $3$.  Each face in $F_5'\cup F_5''$ contains two edges in $E(C,V(G)-C)$, and it is allocated at least $\frac{1}{2}\cdot 2 = 1$, unless it shares an edge with a $3$-face in $F_3'\cup F_3''$. In that case, it is not adjacent to pendant $3$-face, so it gains $\frac{1}{4}+\frac{3}{4}=1$. This implies that $$2e(C,V(G)-C)-3(|F_3'\cup F_3''|)-|F_5'\cup F_5''|-t_3-\frac{t_5}{4}\ge 0.$$

Hence from \eqref{calc}, $\mu^*(C)>0$ if $C$ is a $3$- or $5$-cycle.  When $r=6$, $\mu^*(C)\ge 0$, with equality only if $a=0$ and $$2e(C,V(G)-C)-3(|F_3'\cup F_3''|)-|F_5'\cup F_5''|-t_3-\frac{t_5}{4}=0.$$

This implies that each edge must be as in (a) or (b), and it is either the common edge of two $5$-faces and adjacent to a pendant $3$-face or the common edge of a $3$-face and a $5$-face and adjacent to a pendant $5$-face.  Note that edges on $3$-faces in $F_3''$ cannot be adjacent to pendant $5$-faces, so  $F_3''=\emptyset$.

Let $C=u_1u_2u_3u_4u_5u_6$. Suppose that $u_1$ is on a $3$-face in $F_3'$. Then $u_i$ must be a $2$-vertex for $2\le i\le 6$; otherwise $u_1$ and $u_i$ must be on the same $5$-face and thus $d_{\Delta}\ge 2$ implies $u_i$ cannot be on a 3-face or adjacent to a pendant $3$-face in $int(C)$. But in this case there is a $7^+$-face, contradicting $a=0$. Therefore $F_3'\cup F_3''=\emptyset$.

Now if $u_1, u_2$ are $3^+$-vertices and in the same $5$-face $u_1u_2v_2vv_1$, then both $v_1, v_2$ are in pendant triangles, and $v$ must be in both, contradicting $d_{\Delta} \geq 2$. Hence $C$ contains no consecutive $3^+$-vertices.  Let $d(u_1)\ge 3$ and $d(u_i)=2$ for $2\le i\le s-1$ and $d(u_s)\ge 3$. Note that $u_1,u_s$ are in the same $5$-face. Then $u_1, u_s$ have the same pendant $3$-face.  This implies that $u_s=u_3$. Again, there is no $7^+$-faces, so one of $u_4, u_5, u_6$ must be $3^+$-vertex. Therefore, by the above argument, it must be $d(u_5)\ge 3$ and $d(u_4)=d(u_6)=2$. But then $C$ is a bad $6$-cycle, contrary to our assumption that $C$ is good. 
\end{proof}

\begin{lemma}
Each face other than $C$ has nonnegative final charge.
\end{lemma}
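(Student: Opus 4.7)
The plan is to verify $\mu^*(f)\ge 0$ separately for each type of face $f\ne C$. Since $G$ has no $4$-cycles, every face other than $C$ has length $3$, $5$, $6$, or at least $7$. First I would dispatch the easy cases: a $6$-face neither gives nor receives charge under (R1)--(R3), so its final charge remains at the initial value $0$; a $7^+$-face has initial charge $d(f)-6\ge 1$ and surrenders at most $1$ to $C$ via (R3a); any face in $F_3'\cup F_3''$ starts at $-3$ and receives exactly $3$ from $C$ by (R3b); and any face in $F_5'\cup F_5''$ starts at $-1$ and receives $1$ from $C$. Thus $\mu^*(f)\ge 0$ in all of these trivial cases.

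Next I would handle internal $3$-faces by enumerating degree sequences $(d(x),d(y),d(z))$. Lemma~\ref{3vert1} guarantees that every $3$-vertex on $f$ has an outer neighbor in $V(C)$ or of degree at least $5$, and Lemma~\ref{335Face} strengthens this for $3$-vertices on internal $(3,3,5^-)$-faces. Combining these pendant contributions from (R2b) or (R3b) with the in-face gifts from $4^+$-vertices via (R1a), (R2c), and (R2d) reaches exactly $3$ in each tight case (for instance, $(3,3,5)$: $\frac{7}{4} + 2\cdot\frac{5}{8} = 3$; $(3,4,4)$: $2\cdot\frac{5}{4} + \frac{1}{2} = 3$; $(3,5,5)$: $2\cdot\frac{3}{2}=3$), with a surplus in all other degree sequences.

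The main work is the internal $5$-faces, whose initial charge of $-1$ must be offset by at least $1$. I would organize the case analysis by $n_3(f)$, the number of $3$-vertices on $f$. When $n_3\in\{4,5\}$, every $3$-vertex on $f$ is special, so each provides $\frac{1}{4}$ via the pendant-special-5-face route (either (R2a) from a $5^+$ outer neighbor or (R3b) from $C$); Lemma~\ref{334Face} rules out the worst case for the single $4^+$-vertex when $n_3=4$. For $n_3\le 3$ the subcases depend on the degree sequence and the adjacency pattern of the $4^+$- and $5^+$-vertices along $f$. The refinement of (R2a) according to whether $f$ is rich and whether each $5^+$-vertex is good or bad is precisely calibrated so that, together with the pendant contributions from on-face special vertices and (R1b) gifts from incident $4$-vertices, the total charge reaching $f$ is at least $1$.

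The hard part will be the tight subcases on internal $5$-faces where the incoming charge is exactly $1$: for example, an $f$ containing exactly two consecutive $5^+$-vertices and one interior special $3$-vertex, yielding $\frac{3}{8}+\frac{3}{8}+\frac{1}{4}=1$, or an $f$ with three $5$-vertices each giving only $\frac{1}{3}$. In these tight cases there is no slack, so I would invoke the reducible configurations to rule out extremal clustering: Lemma~\ref{Max1} bounds the number of pendant $(3,3,5^-)$/$(3,4,4)$-faces plus special neighbors around each $5^+$-vertex, Lemmas~\ref{335Face2} and \ref{336Face} limit potentially special neighbors of $5$- and $6$-vertices incident to $(3,3,5^-)$- and $(3,3,6)$-faces, Lemma~\ref{two3face} prevents two adjacent pendant $(3,3,5^-)$/$(3,4,4)$-faces from sharing a common $5$-vertex, and Lemma~\ref{5Face} eliminates certain internal $6$-cycles with a chord. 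These structural restrictions are exactly what turn the tight counts into genuine inequalities, and are where the real content of the discharging argument lives.
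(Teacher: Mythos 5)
Your treatment of the $6^+$-faces, the faces meeting $C$, and the internal $3$-faces is correct and matches the paper's proof, including the identification of the tight degree sequences $(3,3,5)$, $(3,4,4)$, $(3,5,5)$ (for the $(3,3,3)$- and $(3,3,4)$-cases you correctly lean on Lemma~\ref{335Face} rather than just Lemma~\ref{3vert1} to force the pendant contributions). The problem is the internal $5$-faces, which is where essentially all of the content of this lemma lives, and there your plan both leaves the work undone and points at the wrong tools. The lemmas you propose to invoke in the ``tight subcases'' --- Lemmas~\ref{Max1}, \ref{5Face}, \ref{two3face}, \ref{335Face2}, and \ref{336Face} --- are used in this paper only for the \emph{vertex} charge analysis (bounding how much a $5^+$-vertex sends out); none of them enters the face analysis. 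Moreover, in a tight case where the incoming charge equals exactly $1$ there is nothing to ``rule out,'' since $\mu^*(f)=0$ suffices; the real task is to \emph{certify} that the claimed contributions actually arrive.

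Concretely, the two kinds of vertices that send a $5$-face nothing are (i) a triangular $4$-vertex lying on an internal $(3,3,4)$-face, which gives $0$ by (R\ref{itm:4v_5f}), and (ii) a non-special $3$-vertex. The argument you need (and the one the paper gives) is: a non-special $3$-vertex on $f$ has a $5^+$-face-neighbor on $f$ by definition of special, so it certifies a direct contributor; a special $3$-vertex delivers $\frac14$ through its outer neighbor by Lemma~\ref{3vert1} together with (R\ref{itm:5+v_inci5}) or (R3b); and Lemma~\ref{334Face} with $d_\Delta\ge 2$ shows that a $4$-vertex of type (i) must have a nontriangular $4^+$-vertex as a face-neighbor on $f$, so such vertices cannot appear in positions not adjacent to the (at most two) $5^+$- or nontriangular $4$-vertices of $f$. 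This is what closes the cases with zero, one, or two consecutive $5^+$-vertices ($\frac14\cdot 5$, $\frac12+\frac14\cdot2$, and $\frac38\cdot2+\frac14$, respectively). Your organization by $n_3(f)$ could be made to work, but note that your claim ``when $n_3=4$ every $3$-vertex on $f$ is special'' fails when the fifth vertex is a $5^+$-vertex (its two face-neighbors are then not special); that subcase survives only because the $5^+$-vertex itself gives $\frac12$. As written, the proposal does not constitute a proof of the $5$-face case.
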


\begin{proof}
Observe first that (R3a) is the only rule applied to $7^+$-faces; therefore all such faces have a nonnegative final charge.

Suppose next that $f$ is a face with $d(f)=3$; the initial charge on $f$ is $-3$.

If $f\in F_3'$, then $f$ gets 3 from the vertex in $V(C)$ incident to $f$ by (R3).  If $f\in F_3''$, then $f$ gets $\frac{3}{2}$ from the two $3^+$-vertices in $V(C)$ incident to $f$, again by (R3). In either case, $\mu^*(f) \geq -3+3=0$.

Next, suppose $f\in int(C)$.

\begin{itemize}
\item If $f$ is a $(3,3,3)$-face, then by Lemma~\ref{335Face}, its outer neighbors either have degree at least $5$ or lie on $C$.  Hence by (R\ref{itm:5+v_pend3f}) and (R3), $f$ gets 1 from each outer neighbor, and $\mu^*(f) \geq 0$.

\item If $f$ is a $(3,3,4)$-face, then $f$ gets $2$ from the incident 4-vertex by (R\ref{itm:4v_3f}).  Lemma~\ref{335Face} again guarantees that the outer neighbors of the 3-vertices on $f$ either have degree at least $5$ or lie on $C$, and hence $f$ gets at least $\frac{1}{2}$ from the outer neighbors of its 3-vertices by either (R\ref{itm:5+v_pend3f}) or (R3b). Hence $\mu^*(f) \geq -3+\left(2+\frac{1}{2} \cdot 2\right)=0$.

\item If $f$ is a $(3,3,5)$-face, then by (R\ref{itm:5v}) and (R\ref{itm:5+v_pend3f}), it gets $\frac{7}{4}$ from the 5-vertex and at least $\frac{5}{8}\cdot 2$ from the two outer neighbors, so $\mu^*(f) \geq -3+\left(\frac{7}{4}+\frac{5}{8} \cdot 2\right)=0$.

\item If $f$ is a $(3,4,5)$-face, then $f$ gets $\frac{5}{4}$ from its 4-vertex by (R\ref{itm:4v_3f}) and $\frac{7}{4}$ from its 5-vertex by (R\ref{itm:5v}), hence $\mu^*(f)\ge 0.$

\item If $f$ is a $(3,4,4)$-face, then $f$ gets $\frac{5}{4}$ from each incident 4-vertex by (R\ref{itm:4v_3f}) and at least $\frac{1}{2}$ from the pendant vertex , and $\mu^*(f)=0$.

\item If $f$ is a $(3,5,5)$-face or a $(3,3,6^+)$-face, then (R\ref{itm:5v}) or (R\ref{itm:6+v}), respectively, imply that $f$ receives a charge of 3 from its incident vertices.

\item If $f$ is a $(3,4^+,6^+)$-face, then $f$ receives 2 from the $6^+$-vertex by (R\ref{itm:6+v}) and at least $\frac{5}{4}$ from the $4^+$-vertex by (R\ref{itm:4v_3f}), (R\ref{itm:5v}) or (R\ref{itm:6+v}), and again, the final charge on $f$ is nonnegative.

\item If $f$ is a $(4^+,4^+,4^+)$-face, then by (R1) and (R2), $f$ gets at least 1 from each incident vertex.
\end{itemize}
Therefore the final charge on all 3-faces is nonnegative.

Assume now that $d(f)=5$, so the initial charge on $f$ is $-1$.  If $f$ is an $F_5'$- or $F_5''$-face, then by (R3b), $f$ gets $1$ from the incident vertices on $C$. Hence we let $f$ be a 5-face in $int(C)$.

Suppose $f$ contains at least three $5^+$-vertices. If $f$ is rich, then by (R2a), $f$ receives $\frac{1}{2}$ from the good $5^+$-vertex and at least $\frac{1}{4}$ from each of the other two (or more) $5^+$-vertices, and  $\mu^*(f)\ge -1+\frac{1}{2}+\frac{1}{4} \cdot 2=0$.  If $f$ is not rich, then $f$ receives $\frac{1}{3}$ from each and $\mu^*(f)\ge -1+\frac{1}{3} \cdot 3 = 0$.    Suppose $f$ contains exactly two non-consecutive $5^+$-vertices; then by (R2), $f$ gets $\frac{1}{2}$ from each, and $\mu^*(f)\ge 0$.  Similarly, since $f$ receives $\frac{1}{2}$ from each nontriangular $4$-vertex,  $\mu^*(f)\ge 0$ when $f$ has at least such $4$-vertices, or one such 4-vertex and one $5^+$-vertex.  Hence we may assume that $f$ contains at most two $5^+$- and nontriangular $4$-vertices, and when it has exactly two, they are consecutive $5^+$-vertices on $f$.

Let $f=v_1v_2v_3v_4v_5$.  By $d_{\Delta}(G) \geq 2$ and Lemma~\ref{334Face}, if $v_i$ is $4$-vertex on $f$ on an internal $(3,3,4)$-face in $int(C)$ (that is, $v_i$ is a 4-vertex that does not give $\frac{1}{4}$ to $f$), then either $v_{i-1}$ or $v_{i+1}$ is a nontriangular $4^+$-vertex.  Hence when $f$ has no $5^+$- or nontriangular $4$-vertices, $f$ also contains no 4-vertex on an internal $(3,3,4)$-face. Since any $3$-vertex on $f$ must be a special vertex, $\mu^*(f)\ge -1+\frac{1}{4}\cdot 5=\frac{5}{4}>0$ by (R\ref{itm:4v_5f}) and (R\ref{itm:5+v_inci5}). When $v_1$ is the only $5^+$- or nontriangular $4$-vertex on $f$, neither $v_3$ nor $v_4$ is a $4$-vertex on an internal $(3,3,4)$-face, and $f$ gets $\frac{1}{4}$ through each of $v_3,v_4$ and $\frac{1}{2}$ from $v_1$. Thus $\mu^*(f)\ge -1+\frac{1}{2}+\frac{1}{4} \cdot 2=0$.  When $v_1$ and $v_2$ are the only $5^+$- or nontriangular $4$-vertices on $f$, they must be $5^+$-vertices.  Further, $v_4$ cannot be a $4$-vertex on an internal $(3,3,4)$-face, and by (R\ref{itm:4v_5f}) and (R\ref{itm:5+v_inci5}), $\mu^*(f)\ge -1+\frac{3}{8}\cdot 2+\frac{1}{4}=0$.
\end{proof}

Clearly, each vertex on $C$ has final charge $0$, since all its (positive or negative) charges are given to $C$. Now we consider the vertices in $int(C)$.  By Lemma~\ref{No2V}, if $v\in int(C)$, then $d(v)\ge3$. If $d(v)=3$, then the initial charge on $v$ is $2d(v)-6=0$, and $v$ does not distribute charge during discharging. Hence we consider $d(v)\ge4$.

Suppose $d(v)=4$. Vertex $v$ distributes charge according to rule (R\ref{itm:4v}).  If $v$ is triangular, then it gives $2$ if it is incident with a $(3,3,4)$-face in $int(C)$, and at most $\frac{5}{4}+\frac{1}{4}\cdot 3=2$ otherwise.  If $v$ is nontriangular, then it gives at most $\frac{1}{2}\cdot 4=2$.   Hence the charge of all 4-vertices after the discharge procedure is at least 0.

\medskip

\begin{lemma}
Triangular $5^+$-vertices in $int(C)$ have nonnegative final charge.
\end{lemma}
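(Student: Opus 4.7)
Fix a triangular $5^+$-vertex $v\in int(C)$ of degree $d$, and let $f_0$ denote its unique incident $3$-face (unique since $d_\Delta(G)\ge 2$). The initial charge is $2d-6$, and all outflow is through (R\ref{itm:5+v}). It splits into four streams: a payment to $f_0$ via (R\ref{itm:5v}) or (R\ref{itm:6+v}); a payment to each of the $d-1$ other incident faces (only $5$-faces receive anything, via (R\ref{itm:5+v_inci5})); a payment of $\frac{1}{4}$ to each pendant special $5$-face; and a payment to each pendant $(3,3,3)$-, $(3,3,4)$-, $(3,3,5)$-, or $(3,4,4)$-face via (R\ref{itm:5+v_pend3f}). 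Writing $m$ for the number of special outer neighbors of $v$ and $t$ for the number of pendant $(3,3,5^-)$- or $(3,4,4)$-faces, Lemma~\ref{Max1} yields $m+t\le d-2$.

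I would proceed by cases on $d$. When $d\ge 7$, the initial charge $2d-6\ge 8$ accommodates the worst-case payments easily: $f_0$ takes at most $3$, incident $5$-faces take at most $\frac{1}{2}$ each, and by Lemma~\ref{Max1} the pendant contributions are globally bounded; a direct count gives $\mu^*(v)\ge 0$. For $d=6$, case on the type of $f_0$ using (R\ref{itm:6+v}). The tight subcase is $f_0$ a $(3,3,6)$-face, where $v$ pays $3$ and has $3$ remaining for the four outer faces and pendants; here Lemma~\ref{336Face} caps the number of potentially special outer neighbors of $v$ at two when $v$ has no neighbor on $C$, and a short complementary argument handles the case when $v$ is adjacent to $C$.

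The delicate case is $d=5$ with budget $4$. If $f_0$ is a $(4^+,4^+,5)$-face, then (R\ref{itm:5v}) charges only $1$, leaving a comfortable $3$. If $f_0$ is a $(3,5,5)$-face, $v$ pays $\frac{3}{2}$, and the bound $m+t\le 3$ together with (R\ref{itm:5+v_inci5}) suffices. The hardest subcase is $f_0$ of type $(3,4^-,5)$, where $v$ pays $\frac{7}{4}$ and retains only $\frac{9}{4}$ for its three outer neighbors and four incident $5$-faces. Here I would invoke Lemma~\ref{335Face2}(1) to bound by one the number of potentially special neighbors on each relevant side of $v$; Lemma~\ref{335Face2}(2) to handle the mixed subconfigurations; Lemma~\ref{5Face} to rule out chorded internal $6$-cycles that would otherwise force an extra pendant triangle; Lemma~\ref{two3face} to preclude two potentially special outer neighbors separated by a $3$-vertex; and Lemma~\ref{335Face} to upgrade any outer pendant $(3,3,5^-)$-face neighbor to a $5^+$-vertex. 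The refinement in (R\ref{itm:5+v_inci5}), which reduces $v$'s contribution to incident $5$-faces according to richness and the number of $5^+$-vertices, provides the remaining slack.

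The main obstacle will be the $d=5$, $(3,4^-,5)$ subcase: the $\frac{9}{4}$ budget is tight, and many subconfigurations arise depending on how the three outer neighbors distribute between special vertices, pendants of small $3$-faces, and positions relative to rich versus non-rich incident $5$-faces. Each requires a careful combination of the reducibility lemmas above, but none of the subconfigurations is genuinely difficult once the structural restrictions are systematically enumerated.
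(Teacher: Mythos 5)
Your plan follows the same route as the paper: split on $d$, then on the type of the (unique) incident $3$-face $f_0$, pay $f_0$ via (R\ref{itm:5v})/(R\ref{itm:6+v}), and use Lemma~\ref{Max1}, Lemma~\ref{335Face2}, and Lemma~\ref{336Face} to cap the number of special neighbors in the tight subcases. The $d\ge 7$, $d=6$, and $(4^+,4^+,5)$ computations go through exactly as you say (and you should also dispose first of the easy case $f_0\cap C\neq\emptyset$, where $v$ pays nothing to $f_0$).

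However, there is a concrete arithmetic gap in your $(3,5,5)$ subcase. You claim that $m+t\le 3$ from Lemma~\ref{Max1} together with (R\ref{itm:5+v_inci5}) suffices, but with three special outer neighbors the outflow is at least $\frac{3}{2}+\frac{1}{2}\cdot 4+\frac{1}{4}\cdot 3=\frac{17}{4}>4$, and the $\frac{3}{8}$ discount of (R\ref{itm:5+v_inci5}) can recover at most $\frac{1}{8}$ per qualifying face, not enough. You must invoke Lemma~\ref{335Face2}(1) (applied to the two triples $\{z_1,z_2,y\}$ and $\{z_1,z_3,y\}$) to cut the special neighbors down to two, which gives exactly $4-\frac{3}{2}-\frac{1}{2}\cdot 4-\frac{1}{4}\cdot 2=0$. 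Similarly, in the decisive $(3,4^-,5)$ subcase you name the right lemmas but defer the actual argument; the step that makes the $\frac{9}{4}$ budget work is a trichotomy extracted from Lemmas~\ref{335Face} and~\ref{335Face2}: either $v$ has at most one special neighbor, or it has two special neighbors and an incident face meeting $C$, or it has two special neighbors and a nontriangular $5^+$-neighbor, in which case two incident $5$-faces have consecutive $5^+$-vertices and receive only $\frac{3}{8}$ each (and one checks $v$ cannot be a good $5$-vertex). Without establishing that trichotomy the subcase does not close, since $4-\frac{7}{4}-\frac{1}{2}\cdot 4-\frac{1}{4}\cdot 2<0$. Your proposed uses of Lemma~\ref{5Face} and Lemma~\ref{two3face} are not what is needed here; those lemmas are the tools for the nontriangular $5$-vertex analysis.
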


\begin{proof}
Let $v$ be a triangular $d$-vertex in $int(C)$ with $d\ge 5$.  Let $f_0$ be the $3$-face incident with $v$.

If $f_0\cap C\not=\emptyset$,  then $v$ does not send charge to $f_0$.  Since $v$ gives at most $\frac{1}{2}$ to its other incident faces and $\frac{1}{4}$ to each of the special pendant 5-faces, $\mu^*(v)\ge (2d-6)-\frac{1}{2}\cdot (d-1)-\frac{1}{4}\cdot d>0$.  Now assume $f_0 \in int(C)$.

Let $d=5$. 
\begin{itemize}
\item $f_0$ is a $(3,4^-,5)$-face. By Lemmas~\ref{335Face} and~\ref{335Face2}, one of the following must hold: (1) $v$ is adjacent to at most one special vertex, (2) $v$ is adjacent to two special vertices and incident with a face that contains a vertex in $C$, or (3) $v$ is adjacent to two special vertices and a nontriangular $5^+$-vertex, which implies $v$ has at most two incident 5-faces with non-consecutive $5^+$-vertices. In all cases, (R2) governs the distribution of charge: in case (1), $\mu^*(v)\ge 4-\frac{7}{4}-\frac{1}{2}\cdot 4-\frac{1}{4}=0$; in case (2), $\mu^*(v)\ge 4-\frac{7}{4}-\frac{1}{2}\cdot 3-\frac{1}{4}\cdot 2>0$; and in case (3), if $v$ is a bad $5$-vertex, then $\mu^*(v)\ge 4-\frac{7}{4}-\frac{1}{2}\cdot 2-\frac{3}{8}\cdot 2-\frac{1}{4}\cdot 2=0$;   note that $v$ cannot be a good $5$-vertex, for otherwise, $f_0$ must be a $(3,3,5)$-face and the two $3$-vertices are special, but Lemma~\ref{335Face} tells that they must be adjacent to a pendant $5^+$-vertices, a contradiction.

\item  $f_0$ is a $(3, 5,5)$-face. By Lemma~\ref{335Face2}, $v$ is adjacent to at most two special vertices, and by (R2), $\mu^*(v)\ge 4-\frac{3}{2}-\frac{1}{2}\cdot 4-\frac{1}{4}\cdot 2= 0$.

\item $f_0$ is any other $3$-face. By (R\ref{itm:5v}), $v$ gives 1 to $f_0$. Lemma~\ref{Max1} implies that $v$ has a maximum of three adjacent special vertices, so by (R2), $\mu^*(v)\ge 4-1-\frac{1}{2}\cdot 4-\frac{1}{4}\cdot 3 > 0$.
\end{itemize}

Let $d=6$.  First assume that $f_0$ is a $(3^+,4^+,6)$-face. By Lemma~\ref{Max1} and (R2), $v$ gives $\frac{1}{4}$ to up to four adjacent special vertices. By (R2a), $v$ gives at most $\frac{1}{2}$ to each incident 5-face, and by (R\ref{itm:6+v}), $v$ gives at most 2 to the incident 3-face. Thus $\mu^*(v) \geq 6-(2+\frac{1}{4} \cdot 5+\frac{1}{2} \cdot 5)>0.$    So we may assume that $f_0$ is a $(3,3,6)$-face.
By Lemma~\ref{336Face}, $v$ is adjacent to no more than two special vertices or one of its neighbors is in $V(C)$  (this implies that $v$ has at most three incident $5$-faces in $int(C)$).  By (R2), in the former case, $\mu^*(v)\ge 6-3-\frac{1}{2}\cdot 5-\frac{1}{4}\cdot 2=0$, and in the latter case, $\mu^*(v)\ge 6-3-\frac{1}{2}\cdot 3-\frac{1}{4}\cdot 4>0$.

Finally, let $d\ge 7$. Then $v$ is incident with at most $d-1$ faces of length 5, and by Lemma~\ref{Max1}, at most $d-2$ special vertices.  By (R2), $\mu^*(v)\ge 2d-6-3-\frac{1}{2}\cdot (d-1)-\frac{1}{4}\cdot (d-2)=\frac{1}{4}(5d-32)>0$.
\end{proof}

In the rest of the paper, whenever mentioned, $(3,3,5^-)$-faces, $(3,4,4)$-faces, and $5$-faces are in $int(C)$.

\begin{lemma}
Nontriangular $6^+$-vertices in $int(C)$ have nonnegative final charge.
\end{lemma}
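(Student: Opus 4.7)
The strategy is to bound $v$'s outflow against the initial charge $\mu(v) = 2d-6 \ge 6$. Since $v$ is nontriangular, (R2d) is inactive and every incident face has length at least $5$; by (R2a) each of the (at most $d$) incident $5$-faces receives at most $\tfrac{1}{2}$ and each pendant special $5$-face receives $\tfrac{1}{4}$; by (R2b) pendant $(3,3,3)$-, $(3,3,5)$-, and $(3,4^-,4)$-faces absorb at most $1$, $\tfrac{5}{8}$, $\tfrac{1}{2}$ respectively. Letting $t$ be the number of internal pendant $(3,3,5^-)$- and $(3,4,4)$-faces at $v$ and $s$ the number of special neighbors, Lemma~\ref{Max1} gives $s+t \le d-2$.

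I plan a three-way case analysis. When $v$ has no pendant $(3,3,3)$-face, each pendant contributes outflow at most $\tfrac{5}{8}$, and
\[
\mu^*(v) \ge (2d-6) - \tfrac{5}{8}(s+t) - \tfrac{d}{2} \ge \tfrac{7d-38}{8} > 0 \qquad (d \ge 6).
\]
When $v$ has a pendant $(3,3,5^-)$-face $f$ that is not the interior triangle of any bad $6$-cycle, Lemma~\ref{5Face} (applied with $v_4 = v$) sharpens the bound to $s+t \le d-3$, yielding
\[
\mu^*(v) \ge (2d-6) - (s+t) - \tfrac{d}{2} \ge \tfrac{d-6}{2} \ge 0.
\]

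The remaining subcase is when every pendant $(3,3,5^-)$-face at $v$ is the interior triangle of a bad $6$-cycle. Lemma~\ref{5Face} is unavailable, so I would invoke Lemma~\ref{335Face}: the pendant neighbors of the two other $3$-vertices in each such triangle must be $5^+$-vertices (or on $C$). In the associated bad-$6$-cycle structure these are precisely the face-neighbors of the $v$-neighbors playing $u_4$- or $u_6$-roles on bad-$6$-cycle $5$-faces, so those neighbors cannot be special, forcing $s=0$. Combined with the observation that $d_{\triangle} \ge 2$ makes the face at $v$ between any two consecutive pendant neighbors a $6^+$-face (a $5$-face would contain a direct edge between vertices of distinct pendant triangles, which are at distance $\ge 2$), a routine count bounds the incident $5$-faces by $\min(d,\, 2(d-k))$ where $k$ is the number of pendant neighbors, and yields $\mu^*(v) \ge 0$, with equality only at $d=6$.

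The main obstacle is this last subcase: Lemma~\ref{5Face} does not directly apply, and one must combine Lemma~\ref{335Face} (to preclude special neighbors among the bad-$6$-cycle role vertices at $v$) with the planarity-plus-$d_{\triangle}\ge 2$ observation that consecutive pendant neighbors force $6^+$-faces, in order to recover the charge balance.
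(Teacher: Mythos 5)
Your Case 1 (no pendant $(3,3,3)$-face) is a correct and complete calculation. The problem is in Case 2, where you invoke Lemma~\ref{5Face} ``with $v_4=v$'' to get $s+t\le d-3$. That lemma does not apply to an arbitrary pendant $(3,3,5^-)$-face that fails to be the interior triangle of a bad $6$-cycle: its hypotheses require the entire internal $6$-cycle $(v_1,\dots,v_6)$ with chord $v_1v_3$ to be present, i.e.\ the pendant triangle's $(3,3)$-edge must lie on a $5$-face $v_1v_3v_4v_5v_6$ incident to $v=v_4$ (this is exactly how the lemma is used elsewhere: ``if $v$ is incident with a $5$-face that shares a $(3,3)$-edge with a pendant $(3,3,5^-)$-face\ldots''). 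If both faces of $v$ meeting the pendant edge are $6^+$-faces, or the only adjacent $5$-face passes through the $4$- or $5$-vertex of the triangle rather than its second $3$-vertex, there is no such $6$-cycle and the conclusion $k\le d(v_4)-3$ is simply not available. Your Case 2 then has no bound better than $s+t\le d-2$, and your displayed inequality degenerates to $\tfrac{d}{2}-3-1<0$ at $d=6$; you would need to recover the lost charge from the $6^+$-faces you have implicitly created, which you do not do.

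Case 3 has a second gap: from Lemma~\ref{335Face} you correctly conclude that the two $6$-cycle neighbors of $v$ flanking each bad-$6$-cycle pendant $3$-vertex are not special, but this shows only that those particular neighbors are non-special, not that $s=0$. For $d=6$ with one such pendant triangle, three further neighbors of $v$ remain unconstrained and Lemma~\ref{Max1} allows up to three special neighbors among them; a count with $s>0$ and six incident $5$-faces can dip below zero (e.g.\ $6-3-3-\tfrac14<0$ if $k=3$ and $s=1$), so you genuinely need the covering argument (the triples of non-special neighbors exhaust $N(v)$ when $k$ is large) rather than the blanket claim $s=0$. For comparison, the paper's proof of this lemma avoids Lemma~\ref{5Face} altogether: it splits on $t\le d-4$, $t=d-2$, $t=d-3$, using only Lemma~\ref{Max1}, the observation that a face between two consecutive pendant neighbors is a $6^+$-face (by $d_{\triangle}\ge2$), and Lemma~\ref{335Face} in the tight subcase $d=6$, $t=3$ with three $(3,3,3)$-pendants and six incident $5$-faces. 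Recasting your argument along those lines would close both gaps.
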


\begin{proof}
Let $v\in int(C)$ be a nontriangular $6^+$-vertex and let $t$ be the number of pendant $(3,3,5^-)$ or $(3,4,4)$-faces of $v$. By Lemma~\ref{Max1}, $v$ has at most $(d-t-2)$ pendant special $5$-faces. By (R\ref{itm:5+v}), $v$ gives at most 1 to each pendant $3$-face, at most $\frac{1}{2}$ to each incident $5$-face, and $\frac{1}{4}$ to each pendant special $5$-face. So if $t\le d-4$, then $\mu^*(v) \geq 2d-6-t-\frac{1}{2}d -\frac{1}{4}(d-t-2)=\frac{1}{4}(5d-22-3t)\ge\frac{1}{4}(2d-10)>0$. If $t=d-2$, then $v$ has no pendant special 5-faces, and $d_{\triangle}(G)\ge 2$ implies that $v$ has at most $4$ incident $5$-faces. So $\mu^*(v) \geq 2d-6-(d-2)-\frac{1}{2}(4)=d-3\ge0$. Since $t\le d-2$  by Lemma~\ref{Max1}, it remains only to check $t=d-3$.

If $d\ge7$, then $\mu^*(v) \geq (2d-6)-(d-3)-\frac{1}{2}d-\frac{1}{4}(d-t-2)=\frac{1}{2}d-\frac{13}{4}>0$. If $d=6$ and at least one of the three pendant $3$-faces of $v$ is not a $(3,3,3)$-face, then $v$ gives at most $\frac{5}{8}$ to this $3$-face by (R\ref{itm:5+v}), so $\mu^*(v) \geq 6-1\cdot 2-\frac{5}{8}-\frac{1}{2}\cdot 6-\frac{1}{4}=\frac{1}{8}>0$. So we may assume that the $6$-vertex $v$ is adjacent to exactly three pendant $(3,3,3)$-faces. If $v$ has at most five incident $5$-faces, then $\mu^*(v)\geq 6-1\cdot 3-\frac{1}{2}\cdot 5-\frac{1}{4}=\frac{1}{4}>0$. If $v$ has six incident $5$-faces, then $v$ is adjacent to no pendant special $5$-faces since by Lemma~\ref{335Face}, the outer neighbors of the $3$-vertices on a $(3,3,3)$-face either are in V(C) or have degree at least 5. So $\mu^*(v) \geq 6-1\cdot 3-\frac{1}{2}\cdot 6=0$.
\end{proof}

\begin{lemma}
Nontriangular $5$-vertices in $int(C)$ have nonnegative final charge.
\end{lemma}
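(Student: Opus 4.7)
The initial charge of a nontriangular 5-vertex $v\in int(C)$ is $\mu(v)=4$. Since $v$ is nontriangular and $G$ has no 4-cycles, all five faces incident to $v$ have length at least 5; by (R2a) and (R2b), $v$ distributes charge to its incident 5-faces (at most $\tfrac{1}{2}$ each), its pendant special 5-faces ($\tfrac{1}{4}$ each), and its pendant $(3,3,5^-)$- or $(3,4,4)$-faces (at most $1$ each, with $1$ attained only by a $(3,3,3)$-face). Let $t$ be the number of such pendant triangular faces at $v$ and $m$ the number of special neighbors of $v$; by Lemma~\ref{Max1}, $t+m\le 3$.

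The plan is to proceed by cases on $t$. For $t=0$, $\mu^*(v)\ge 4-\tfrac{m}{4}-\tfrac{5}{2}\ge \tfrac{3}{4}$. For $t=1$, since $m\le 2$ and the single pendant costs at most $1$, $\mu^*(v)\ge 4-1-\tfrac{1}{2}-\tfrac{5}{2}=0$. The cases $t\in\{2,3\}$ are the hard ones: the naive estimate is negative, and one must recover the deficit from the reduced rates $\tfrac{3}{8}$, $\tfrac{1}{3}$, or $\tfrac{1}{4}$ of (R2a), or by showing some incident faces are actually $6^+$-faces.

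For $t\ge 2$ the plan is to exploit three reducible configurations together with the $d_\triangle(G)\ge 2$ hypothesis. Lemma~\ref{335Face} places a $5^+$-vertex on each incident 5-face flanking any $(3,3,3)$-pendant, namely the outer neighbor of the 3-vertex lying on that flanking face. Lemma~\ref{two3face} shows that pendants at non-consecutive positions $v_i,v_{i+2}$ force $d(v_{i+1})\ge 4$ when the intervening faces are both 5-faces. Lemma~\ref{5Face} supplies an additional $5^+$-vertex via the forced 6-cycle around a pendant $(3,3,5^-)$-face. Moreover, two pendants at consecutive positions $v_i,v_{i+1}$ force the intervening face $f_i$ to be a $6^+$-face, since a 5-face there would create an edge between the two pendant triangles, yielding $d_\triangle(G)=1$. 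Together these constraints install enough $4^+$- and $5^+$-vertices on the incident 5-faces of $v$ to trigger the reduced rates of (R2a), and in the skipped configurations $v$ is also forced to be a bad 5-vertex (since with pendants at $v_i,v_{i+2}$ no three consecutive neighbors of $v$ are simultaneously non-special and non-pendant), enabling the $\tfrac{1}{4}$-rule on rich flanking faces.

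The main obstacle is the detailed combinatorial case analysis for $t\in\{2,3\}$. One subdivides by the cyclic arrangement of the pendants (consecutive versus skipped), by the pendant types ($(3,3,3)$ versus the cheaper $(3,3,5)$, $(3,3,4)$, $(3,4,4)$), and by whether $v$ has a special neighbor. For each configuration one determines whether $v$ is good or bad, traces through how Lemmas~\ref{335Face}, \ref{two3face}, and \ref{5Face} together with $d_\triangle(G)\ge 2$ populate the flanking 5-faces with $4^+$- and $5^+$-vertices, and checks that the resulting reductions in (R2a) (to $\tfrac{3}{8}$, $\tfrac{1}{3}$, or $\tfrac{1}{4}$, or to $0$ when a face is $6^+$) cover the deficit, which reaches $\tfrac{3}{4}$ in the worst sub-case ($t=2$ with two $(3,3,3)$-pendants at skipped positions $v_i,v_{i+2}$ and $m=1$).
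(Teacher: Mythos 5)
Your treatment of $t\le 1$ is correct and matches the paper's, and your diagnosis of the hard cases is accurate: the deficit of up to $\tfrac34$ for $t\in\{2,3\}$ must be recovered from the reduced rates in (R2a), from $6^+$-faces forced between consecutive pendants by $d_\triangle(G)\ge 2$, and from Lemmas~\ref{335Face}, \ref{two3face} and~\ref{5Face} populating the flanking $5$-faces with $4^+$- and $5^+$-vertices. Your individual structural claims along the way (consecutive pendants force a $6^+$-face between them; skipped pendants force $v$ to be a bad $5$-vertex; Lemma~\ref{5Face} yields a $5^+$-vertex on a $5$-face sharing a $(3,3)$-edge with a pendant triangle) are all correct and are exactly the observations the paper uses.

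However, what you have written for $t\in\{2,3\}$ is a plan, not a proof: you state that one ``subdivides by the cyclic arrangement of the pendants\ldots and checks that the resulting reductions cover the deficit,'' but you never perform that check, and this case analysis is essentially the entire content of the lemma. It is not routine. For instance, in the paper's argument the case $t=2$ with skipped pendants and both intervening faces being $5$-faces requires first establishing a separate claim (that a neighbor on a $(3,3,3)$-pendant has $5$-faces on both sides at $v$, via the bad-$6$-cycle structure), then invoking Lemma~\ref{two3face} to get $d(v_2)\ge 4$, then Lemma~\ref{335Face} to certify that $v_2$ is a \emph{good} $4^+$-vertex so that the flanking faces are rich and $v$ pays only $\tfrac14$ to each, and finally an argument that $v$ has no pendant special $5$-face at all in that configuration (the remaining neighbors sit next to $5^+$-vertices, so they are not special) --- without which the arithmetic $4-2-\tfrac14\cdot 2-\tfrac12\cdot 3=0$ fails. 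Similar delicate sub-arguments (e.g.\ distinguishing whether the second $5^+$-vertex guaranteed by Lemma~\ref{5Face} lands on a face already counted, which switches the rate between $\tfrac13$ and $\tfrac38$) are needed throughout $t=3$. Until these verifications are actually carried out configuration by configuration, the lemma is not proved.
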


\begin{proof}
Let $v\in int(C)$ be a nontriangular $5$-vertex that is adjacent to $t$ pendant $(3,3,5^-)$- or $(3,4,4)$-faces. Let $N(v)=\{v_i:  1\le i\le5\}$. Let $f_i$ be the incident face of $v$ containing $v_i,v,v_{i+1}$ for $1\le i\le5$ (index modulo $5$). By Lemma~\ref{Max1}, $v$ has at most $d-t-2$ pendant special 5-faces.  By (R\ref{itm:5+v}), $v$ gives at most 1 to each pendant $3$-face, at most $\frac{5}{8}$ to each pendant 3-face that is not a $(3,3,3)$-face, at most $\frac{1}{2}$ to each incident $5$-face, and $\frac{1}{4}$ to each pendant special $5$-face. If $t\le1$, then $v$ has at most two pendant special $5$-faces, and $\mu^*(v)\geq 4-1-\frac{1}{2}\cdot5-2\cdot\frac{1}{4}=0$. Since  $t\le3$ by Lemma~\ref{Max1}, we may assume that $t=2$ or $t=3$.

{\bf Case 1:} $t=2$.  

First we verify {\bf Claim A:}  If $v_i$ is on a $(3,3,3)$-face, both $f_i$ and $f_{i-1}$ are $5$-faces.

For otherwise, suppose $v_1$ is on a $(3,3,3)$-face and $f_1$ is not a $5$-face. Since an interior triangle of a bad $6$-face must share edges with three $5$-faces, the $(3,3,3)$-face at $v_1$ is not the interior triangle of a bad $6$-cycle.  If $f_5$ is not a 5-face, then $\mu^*(v)\ge 4-2\cdot 1+\frac{1}{2}\cdot 3+\frac{1}{4} > 0$.  If $f_5$ is a $5$-face, then it shares an edge with a $(3,3,3)$-face, thus by Lemma~\ref{5Face}, since $v$ has two pendant $(3,3,5^-)$- or $(3,4,4)$-faces, it has no pendant special $5$-face.  By (R2), $\mu^*(v)\ge 4-2\cdot 1+\frac{1}{2}\cdot 4 = 0$. Hence Claim A is established.

Without loss of generality, we may assume that either $v_1$ and $v_2$ or $v_1$ and $v_3$ are on $(3,3,5^-)$- or $(3,4,4)$-faces.  
First let $v_1$ and $v_2$ be on $(3,3,5^-)$- or $(3,4,4)$-faces. Note that $f_1$ is a $6^+$-face since $d_{\triangle}(G)\ge 2$.  By Claim A, neither $v_1$ nor $v_2$ is on a $(3,3,3)$-face. Hence by (R2), $\mu^*(v)\geq 4-\frac{5}{8}\cdot 2-\frac{1}{2}\cdot 4-\frac{1}{4}=\frac{1}{2}>0$.  


Now let $v_1, v_3$ be on $(3,3,5^-)$- or $(3,4,4)$-faces. Note that $v$ is a bad $5$-vertex. We first suppose that one of $f_1,f_2$ (say $f_1$) is not a $5$-face.  By Claim A, $v_1$ is not on a $(3,3,3)$-face. So by (R2),  $\mu^*(v)\geq 4-1-\frac{5}{8}-\frac{1}{2}\cdot 4-\frac{1}{4}=\frac{1}{8}>0$.


Now assume that both $f_1$ and $f_2$ are $5$-faces. By Lemma~\ref{two3face}, $d(v_2)\ge 4$. If $v$ has no pendant $(3,3,3)$-faces, then $\mu^*(v)\geq 4-\frac{5}{8}\cdot 2-\frac{1}{2}\cdot 5-\frac{1}{4}=0$. Hence we may assume that $v_1$ is on a $(3,3,3)$-face. By Claim A, $f_5$ is a $5$-face. By Lemma~\ref{335Face}, the pendant neighbors (in particular, on $f_1$ and $f_5$) of the $(3,3,3)$-faces are in $V(C)$ or have degree at least 5.

First consider the case that $v_3$ is also on a $(3,3,3)$-face.  By Claim A, $f_3$ is also a $5$-face, and the pendant neighbors (in particular, on $f_2$ and $f_3$) of the $(3,3,3)$-faces are in $V(C)$ or have degree at least 5. Note that $v_2$ cannot be triangular; further, three of its consecutive neighbors are $5^+$-vertices.  Hence $v_2$ is a good $4^+$-vertex and both $f_1$ and $f_3$ are rich $5$-faces. By (R\ref{itm:5+v_inci5}), $v$ gives $\frac{1}{4}$ to each of $f_1$ and $f_2$.   Note that both $v_4$ and $v_5$ are next to a $5^+$-neighbor respectively on $f_3$ and $f_5$, so they are not pendant special $3$-vertices of $v$, and thus $v$ has no pendent special $5$-faces. So by (R2),  $\mu^*(v)\geq 4-1\cdot2-\frac{1}{4}\cdot2-\frac{1}{2}\cdot3=0$.


Finally, assume that $v_3$ is not on a $(3,3,3)$-face. 
Suppose $v_3$ is on a $(3,4^-,4)$-face. If $d(v_2)\not=5$, then $v_2$ is a good $4^+$-vertex and $f_1$ is rich, so by (R2),  $\mu^*(v)\geq 4-1-\frac{1}{2}-\frac{1}{4}-\frac{1}{2}\cdot4-\frac{1}{4}=0$; if $d(v_2)=5$, then $\mu^*(v)\geq 4-1-\frac{1}{2}-\frac{1}{3}-\frac{3}{8}-\frac{1}{2}\cdot3-\frac{1}{4}=\frac{1}{24}>0.$   So we assume that $v_3$ is on a $(3,3,5)$-face. Then by Lemma~\ref{335Face}, $f_2$ contains at least three $5^+$- or good $4^+$-vertices. So $v_2$ is a good $4^+$-vertex and both $f_1$ and $f_2$ are rich $5$-faces. By (R\ref{itm:5+v_inci5}), $v$ gives $\frac{1}{4}$ to $f_1$ and $f_2$.  Then $\mu^*(v)\geq 4-1-\frac{5}{8}-\frac{1}{4}\cdot2-\frac{1}{2}\cdot3-\frac{1}{4}=\frac{1}{8}>0$.

{\bf Case 2:} $t=3$.

By Lemma~\ref{Max1}, $v$ has no pendant special $5$-faces. Without loss of generality, we may assume that either $v_1,v_2,v_3$ or $v_1,v_2,v_4$ are on pendant $(3,3,5^-)$- or $(3,4,4)$-faces.

Assume first that $v_1,v_2,v_3$ are on pendant $(3,3,5^-)$- or $(3,4,4)$-faces.  Since $d_{\triangle}(G)\ge 2$, neither $f_1$ nor $f_2$ is a $5$-face; hence $v$ has at most three incident $5$-faces and is not on a bad $6$-cycle.   If $v$ has at most two incident $5$-faces, then $\mu^*(v)\geq 4-1\cdot3-\frac{1}{2}\cdot2=0$. Thus we suppose that $f_3$ and $f_5$ are $5$-faces (with an incident $3$-face). By Lemma~\ref{5Face}, one of the neighbors of $v_1$ on its triangular face must have degree at least 4; the same is true for $v_3$.  Hence neither of the pendant triangles at $v_1$ and $v_3$ are $(3,3,3)$-faces. By (R2), $\mu^*(v)\geq 4-1-\frac{5}{8}\cdot2-\frac{1}{2}\cdot3=\frac{1}{4}>0$.

Now assume that $v_1,v_2,v_4$ are on pendant $(3,3,5^-)$ or $(3,4,4)$-faces.  Since $d_{\triangle}(G)\ge 2$, $f_1$ is not a $5$-face. Then the $3$-face at $v_2$ is not an interior triangle of a bad $6$-cycle.  By Lemma~\ref{5Face}, either $f_2$ is a $5$-face and $v_2$ is not on a $(3,3,3)$-face or $f_2$ is not a $5$-face.  Hence $v$ gives at most $\max\{1,\frac{5}{8}+\frac{1}{2}\}=\frac{9}{8}$ to $f_2$ and the pendant $3$-face at $v_2$ by (R\ref{itm:5+v}). Similarly, $v$ gives at most $\frac{9}{8}$ to $f_5$ and the pendant $3$-face at $v_1$. If $v_4$ is not on a $(3,3,3)$-face, then $\mu^*(v)\geq 4-\frac{9}{8}\cdot2-\frac{1}{2}\cdot2-\frac{5}{8}=\frac{1}{8}>0.$

Hence we may assume that $v_4$ is on a $(3,3,3)$-face.   If neither $f_2$ nor $f_5$ is a  $5$-face, then $\mu^*(v)\geq 4-1\cdot3-\frac{1}{2}\cdot2=0$, so we may also assume (by symmetry) that  $f_5$ is a $5$-face. If one of $f_3,f_4$ is not a $5$-face, then $\mu^*(v)\geq 4-1-\frac{9}{8}\cdot2-\frac{1}{2}=\frac{1}{4}>0$, so we may assume that both $f_3$ and $f_4$ are $5$-faces.    Since $f_1$ is not a $5$-face, the $3$-face at $v_1$ is not the interior $3$-face of a bad $6$-cycle, so by Lemma~\ref{5Face}, it cannot be a $(3,3,3)$-face.  Now that both $f_4,f_5$ are $5$-faces, by Lemma~\ref{two3face}, $d(v_5)\ge4$.     By Lemma~\ref{335Face}, the pendant neighbors of the $(3,3,3)$-face at $v_4$ are in $V(C)$ or have degree at least 5, hence $f_4$ has at least three $4^+$-vertices.   Since $d_{\triangle}(G)\ge 2$, $v_4$ cannot be a triangular $4$-vertex. If $d(v_5)\ne5$, then $v_4$ is a good $4^+$-vertex and $f_4$ is a rich $5$-face; by (R2), $\mu^*(v)\geq 4-\frac{9}{8}\cdot2-1-\frac{1}{4}-\frac{1}{2}=0$. So let $d(v_5)=5$. If $v_1$ is on a $(3,4^-,4)$-face, then $\mu^*(v)\geq 4-1-\frac{1}{2}-\frac{9}{8}-\frac{1}{2}\cdot2-\frac{1}{3}=\frac{1}{24}>0$. If $v_1$ is on a $(3,3,5)$-face, then the $5$-vertex must be on $f_5$ by Lemma~\ref{5Face}. So by (R\ref{itm:5+v}), $v$ gives $\frac{1}{3}$ to each of $f_4$ and $f_5$. Then $\mu^*(v)\geq 4-1-\frac{5}{8}-\frac{1}{3}\cdot2-\frac{9}{8}-\frac{1}{2}=\frac{1}{12}>0$.
\end{proof}

Therefore all vertices have nonnegative charge after the discharge procedure.

\end{document}